\documentclass[12pt]{amsart}
\usepackage{color,graphicx,enumerate,wrapfig,amssymb,mathtools}
\usepackage{pxfonts}
\usepackage{subfigure}
\usepackage{appendix}
\usepackage{eucal}

\newcommand{\ddiv}{{\mbox{div}}}

\newcommand{\R}{{\mathbb R}}

\newcommand{\cL}{{\mathcal L}}
\newcommand{\cS}{{\mathcal S}}

\newcommand{\cA}{{\mathcal A}}

\newcommand{\cD}{{\mathcal D}}

\newcommand{\cH}{{\mathcal H}}
\newcommand{\cN}{{\mathcal N}}

\newcommand{\e}{\varepsilon}

\newcommand{\spt}{\operatorname{spt}}
\newcommand{\diam}{\operatorname{diam}}
\newcommand{\dist}{\operatorname{dist}}

\newcommand{\p}{\partial}

\newcommand{\ra}{\rightarrow}

\newcommand{\norm}[1]{\left\Arrowvert {#1}\right\Arrowvert}

\theoremstyle{plain}
\newtheorem{theorem}{Theorem}[section]

\newtheorem{corollary}[theorem]{Corollary}

\newtheorem{lemma}[theorem]{Lemma}
\newtheorem{proposition}[theorem]{Proposition}

\theoremstyle{definition}
\newtheorem{definition}[theorem]{Definition}

\theoremstyle{remark}

\newtheorem{notation}[theorem]{Notation}
\newtheorem{remark}[theorem]{Remark}

\numberwithin{equation}{section}

\title [A free boundary ... conductivity]{An elliptic  free boundary arising from the jump of conductivity}
\author{Sunghan Kim}
\address{Seoul National University, Seoul, 151-747, Korea
}
\email{sunghan290@snu.ac.kr}

\author{Ki-ahm Lee}
\address{Seoul National University, Seoul, 151-747, Korea
\& Korea Institute for Advanced Study, Seoul,130-722, Korea}
\email{kiahm@snu.ac.kr}
\author[Henrik Shahgholian ]{Henrik Shahgholian}
\address{Department of Mathematics, Royal Institute of Technology,
  100~44  Stockholm, Sweden}
\email{henriksh@kth.se}
\thanks{S. Kim has been supported by NRF (National Research Foundation of Korea) Grant funded by the Korean Government (NRF-2014-Fostering Core Leaders of the Future Basic Science Program). K. Lee was supported by the National Research Foundation of Korea (NRF) grant funded by the Korea government (MSIP) (No. NRF-2015R1A4A1041675). K. Lee also holds a joint appointment with the Research Institute of Mathematics of Seoul National University. H. Shahgholian has been supported in part by Swedish Research Council. 
This project is part of an STINT (Sweden)-NRF (Korea) research cooperation  program. \\
\indent The authors are grateful to referees' comments that led to a substantial improvement of 
the first submitted draft.  John Andersson is acknowledged for invaluable input, specially for clarifications of several crucial  aspects in his work with coauthor \cite{AM}
}

\begin{document}
    
\begin{abstract}
In this paper we consider a quasilinear elliptic PDE, $\ddiv (A(x,u) \nabla u) =0$,  
where  the  underlying physical problem  gives rise to a jump for the conductivity $A(x,u)$, across a level surface for $u$. Our analysis concerns Lipschitz  regularity for the solution $u$, and the regularity of the level surfaces, where $A(x,u)$ has a jump and the solution $u$ does not degenerate. 

In proving Lipschitz regularity of solutions, we introduce a new and unexpected type of ACF-monotonicity formula with two different operators, that might be of independent interest, and surely can be applied in other related situations. The proof of the monotonicity formula is done through careful computations, and  (as a byproduct) a slight generalization to a specific type of variable matrix-valued conductivity  is presented.

 \end{abstract}

\maketitle

\tableofcontents

%
%

\section{Introduction}\label{section:intro}

\subsection{The model equation}
In this paper, we consider weak solutions $u$ of 
\begin{equation}\tag{$L$}\label{eq:main}
\ddiv(A(x,u)\nabla u)=0\quad\text{in }\Omega ,
\end{equation}
where $\Omega$ is a bounded domain in $\R^n$ ($n\geq 2$) and $A:\R^n\times\R\ra\R$ is defined by
\begin{equation}\label{eq:diffusion}
A(x,s):=a_-(x)+(a_+(x)-a_-(x))H(s)=a_-(x)(1-H(s))+a_+(x)H(s),
\end{equation}
where $H$ is heaviside function and $a_\pm\in C(\R^n)$ satisfying the following structure conditions:
\begin{itemize}
\item there is a $\lambda>0$ such that 
\begin{equation}\label{eq:unif ellip}
\lambda\leq\min\{a_-(x),a_+(x)\},\quad\max\{a_-(x),a_+(x)\}\leq\frac{1}{\lambda},\quad\forall x\in\R^n;
\end{equation}
\item there is a modulus of continuity $\omega$ such that
\begin{equation}\label{eq:cont}
\max\{|a_-(x)-a_-(y)|,|a_+(x)-a_+(y)|\}\leq\omega(|x-y|),\quad\forall x,y\in\R^n.
\end{equation}
\end{itemize}
We call $u$ a weak solution of \eqref{eq:main}, if $u\in W_{loc}^{1,2}(\Omega)\cap L^2(\Omega)$ satisfies
\begin{equation*}
\int  A(x,u)\nabla u\nabla\phi =0,\quad\forall \phi\in W_0^{1,2}(\Omega).
\end{equation*}

\subsection{Applications}

Heat or electric conduction through certain  materials  are usually,  due to 
complex properties of the materials, very hard to model. 
 For instance, mathematical modeling of  composites,  consisting of materials with  different conductivity properties, is one such problem which has been subject for intense (mathematical) studies. Applications of such models can be found in problems related to transmissions \cite{B},  and inverse and discontinuous  conductivity \cite{AI}, as well as other related problems \cite{BC}, where there is a jump in the conductivity.

The conductivity problem becomes substantially complicated when the materials also undergo a phase transition, which can cause  abrupt changes in the conductivity, due to a threshold of the heat or electric current.  The discontinuity in the conduction that  arise from a structural phase change in crystalline materials  has been considered in applied literature specially in relation to transport in solids, such as nanowires.  Relevant discussions in applied literature can be found in   \cite{BDHS}, \cite{WLSXMZ}. The mathematical problem of jump in conductivity across level surfaces
have also been considered recently in  \cite{ACS},  \cite{AM} and \cite{AT}. 

 The simplest model of a material-dependent 
 conductivity can be written as $A(x,u)\nabla u$, where now $A(x,u)$ has a discontinuity in the $u$-variable, which represents the  heat, or electric charge. In this paper, we have chosen a simple elliptic model, where the model equation is written as $A(x,u) = a_+(x)\chi_{\{u> 0\}} +   a_-(x) \chi_{\{u\leq 0\}}$.

It should be remarked that heat conduction in certain materials, which also undergo a phase transition, may  cause a  change in conductivity only at the phase, in terms of latent heat on the boundary between two-phases. Typical examples are melting ice or flame propagation. The mathematical study of such problems, usually entitled Bernoulli free boundaries, have been carried out in a large scale in the last few decades, e.g., \cite{AC}, \cite{ACF}, \cite{CS} and the references therein. Our problem, although qualitatively different, carries features  reminiscent of that of   Bernoulli type problem for  the  latent heat.

\subsection{Methodology and Approach}

As appearing in \eqref{eq:diffusion}, the jump of $A(x,u)$ in the second argument across $\{u=0\}$ naturally induces a free boundary condition, which can be formally represented as 
\begin{equation*}
a_+(x)u_\nu^+=a_-(x)u_\nu^-,
\end{equation*}
where $\nu$ points towards level sets where $u$ increases. 

Assuming the continuity \eqref{eq:cont} of our coefficients, it is not hard to see that our limiting equation after blowing up the solution at a point $z$ is of the form 
\begin{equation*}
\ddiv((a_-(z)+(a_+(z)-a_-(z))H(v))\nabla v)=0,
\end{equation*}
so that the function $w$, defined by
\begin{equation*}
w(x):=a_+(z)v^+(x)-a_-(z)v^-(x),
\end{equation*} 
becomes harmonic.\footnote{For this reason, it seems natural  to call our limit profile a ``broken'' harmonic function.} Hence, the heart of our analysis lies in seeking a way to bring back the strong properties of our limit profiles to the original solutions before blowup. 

Our first result concerns with the Lipschitz regularity of weak solutions to \eqref{eq:main}. In this direction, we develop a new monotonicity formula of the ACF type, involving two different operators. A key issue in deriving an ACF type monotonicity formula is to make sure that the concentrated energy on each component ($I(r,u_+)$ and $I(r,u_-)$ in Appendix \ref{section:apndx}) can be compared on the same sphere, so that the eigenvalue inequality and the Friedland-Hayman inequality can be applied to both components simultaneously. In this direction, one possible choice of two different operators is that the associated coordinate systems can be simultaneously rotated into concentric balls; see Theorem \ref{theorem:acf}. One may notice that the two distinct operators associated with \eqref{eq:main} certainly enjoy this property. As the ACF monotonicity being adapted to linear scaling, we may get rid of any possibility that a sublinearly scaled blowup limit at a free boundary point ends up with a nontrivial function, proving the Lipschitz regularity of our solutions. 

Our analysis on the regularity of our free boundary is conducted by making an exclusive use of measure $\mu=\ddiv(a_+(x)\nabla u^+)$, which turns out to carry essential information on our free boundary points. Our main interest is in twofold; first, smoothness of our free boundary around nondegenerate points, see Definition \ref{definition:N-S}, and second, characterization of $\mu$ in terms of the $(n-1)$-dimensional Hausdorff measure. We first make a crude observation that rapidly vanishing points (Lemma \ref{lemma:D}) carries null $\mu$-measure, which allows us to focus on vanishing points with finite orders. Essentially, finitely vanishing points admit doubling conditions, which stablize the blowup sequence and after all give us a nontrivial limit. With this stability at hand, we may follow the arguments in \cite{AM} to deduce that $\mu$-almost every free boundary point has a two plane solution as its blowup limit, consequently yielding a flatness condition. 

Assuming H\"{o}lder regularity on each part of our coefficients, we invoke a linearization technique to improve the flatness condition in an inductive way, where the technique itself is a well established theory that by now can be considered as classical. Through an iteration argument, we deduce that our free boundary around a flat point can be trapped locally uniformly by two $C^{1,\alpha}$ graphs, and thus proving its $C^{1,\alpha}$ regularity. Equipped with the $C^{1,\alpha}$ regularity theory, we are also able to rule out degenerate points (Definition \ref{definition:N-S}) to have any flatness condition, which well matches our intuition, and finally conclude that the support of $\mu$ consists of nondegenerate points, from which we deduce its $\sigma$-finiteness of $(n-1)$-dimensional Hausdorff measure. 

\subsection{Relation to existing literatures}
The model equation with varying coefficients have not been treated earlier, even when the coefficient on each component is assumed to be smooth. To the authors' best knowledge, the only existing literature so far is that of [AM], which treats the conductivity jump with constant matrices. The authors of \cite{AM} do not prove  Lipschitz regularity for solutions, as this is not true in general. For the parabolic case, Caffarelli and Stefanelli proved in \cite{Ca-St} that solutions to similar problems, with conductivity jump, cannot be Lipschitz regular in space. In a recent result \cite{CDS}, the Lipschitz regularity is proved only in two dimensional case for a similar problem, using geometric methods. According to the authors of \cite{CDS}, one can construct in higher dimensions non-Lipschitz solutions to a Bernoulli type free boundary problem with two different operators and a nonlinear Bernoulli boundary gradient condition. Thus, it should be remarked that the problem under our consideration introduces substantial difficulty, not undertaken earlier in the literatures, and we believe that our results in this paper are original.

To the best of our knowledge, our result on the Lipschitz regularity of solutions is completely new. As mentioned above, even in the scalar case and for constant-matrix valued conductivity there are no such results in the literature, since this in general is not true. Besides, the way we derive the Lipschitz regularity has its independent value in the regularity theory for elliptic PDEs, since we invoke a free boundary technique to detect the optimal regularity of weak solutions to PDEs whose coefficients have specified discontinuity. It should also be stressed that as mentioned earlier, our newly developed monotonicity formula can be extended even for certain matrix valued conductivities of specific types, and thus will probably have an impact in solving other related problems where there are varying operators on each phase of a free boundary problem with a phase transition. 

It is noteworthy that  finding counterexamples to Lipschitz regularity for the matrix coefficient case 
seems a challenging problem. Also finding optimal conditions for Lipschitz regularity of the matrix case remains unanswered. It seems however not obvious whether Lipschitz regularity is a necessity for smoothness of the free boundary or not.

\subsection{Organization of the paper} We prove the existence of weak solutions of \eqref{eq:main} in Section \ref{section:existence} and give a weak formulation of our free boundary condition. Section \ref{section:regularity} is devoted to the optimal regularity of our solutions (Theorem \ref{theorem:opt reg}). In Section \ref{section:prelim} we introduce the measure $\mu=L_+u^+$ and derive some of its basic properties. In section \ref{section:hausdorff} we observe further properties of our free boundary by means of $\mu$ and prepare for Section \ref{section:higher reg}. Section \ref{section:higher reg} is devoted to prove that our free boundary is a $C^{1,\alpha}$ graph in a neighborhood of $\mu$ almost every point (Theorem \ref{theorem:C1a-fb}), and that the support of $\mu$ has $\sigma$-finite $(n-1)$ dimensional Hausdorff measure (Theorem \ref{theorem:Hn-1}). In Section \ref{section:matrix} we extend all the prescribed results for certain class of varying matrix valued conductivity. Finally in Appendix \ref{section:apndx} we give a proof for the newly developed ACF monotonicity formula with two different operators (Theorem \ref{theorem:acf}).

Throughout this paper, we say a constant to be universal if it depends only on $n,\lambda$ and $\omega$, those appearing in \eqref{eq:unif ellip} and \eqref{eq:cont}. Moreover, any constant will be considered positive universal, unless otherwise stated. Also given a function $u$, $u^+:=\max\{u,0\}$ and $u^-:=-\min\{u,0\}$, while by the subscript notation (e.g., $u_\pm$) we just denote two different quantities.

%
%

\section{Existence and Free Boundary Condition}\label{section:existence}

Let us begin with the existence theory of our problem \eqref{eq:main}. 

\begin{proposition}\label{proposition:exist} Given $g\in W^{1,2}(\Omega)$, there exists a weak solution $u$ of \eqref{eq:main} in $\Omega$ such that $u-g\in W_0^{1,2}(\Omega)$.
\end{proposition}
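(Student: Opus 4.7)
The plan is to obtain a solution by regularizing the Heaviside function, solving each regularized problem by standard methods, and then passing to the limit. The key preliminary observation is the rewriting
$$
A(x,u)\nabla u \;=\; a_-(x)\nabla u+(a_+(x)-a_-(x))\nabla u^+ \;=\; a_+(x)\nabla u^+-a_-(x)\nabla u^-,
$$
valid a.e.\ for every $u\in W^{1,2}(\Omega)$: the Sobolev chain rule gives $\nabla u^+ = H(u)\nabla u$ a.e., and Stampacchia's theorem yields $\nabla u=0$ a.e.\ on $\{u=0\}$, so the convention for $H(0)$ plays no role. This reformulation is what will allow a clean passage to the limit.

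First I would introduce $H_\e:\R\to[0,1]$ nondecreasing and Lipschitz with $H_\e\equiv 0$ on $(-\infty,0]$ and $H_\e\equiv 1$ on $[\e,\infty)$, and set $A_\e(x,s):=a_-(x)+(a_+(x)-a_-(x))H_\e(s)$, which is continuous in $(x,s)$ and still obeys \eqref{eq:unif ellip}. Existence of $u_\e\in g+W_0^{1,2}(\Omega)$ satisfying
$$
\int_\Omega A_\e(x,u_\e)\nabla u_\e\cdot \nabla\phi\,dx=0\qquad\forall\phi\in W_0^{1,2}(\Omega)
$$
is then standard via Schauder's fixed-point theorem: for $v\in L^2(\Omega)$ let $Tv\in g+W_0^{1,2}(\Omega)$ be the unique Lax--Milgram solution of $\ddiv(A_\e(x,v)\nabla Tv)=0$; then $T$ is continuous on $L^2$, the energy estimate yields $\|\nabla Tv\|_{L^2}\leq\lambda^{-2}\|\nabla g\|_{L^2}$, and the Rellich compact embedding makes $T$ compact with an invariant closed convex ball.

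Testing $u_\e$'s equation against $u_\e-g$ gives a uniform bound $\|\nabla u_\e\|_{L^2}\leq C(\lambda)\|\nabla g\|_{L^2}$. Let $\Phi_\e(r):=\int_0^r H_\e(s)\,ds$; then $\Phi_\e$ is $1$-Lipschitz with $|\Phi_\e(r)-r^+|\leq\e$ uniformly in $r$, and the Sobolev chain rule gives $\nabla\Phi_\e(u_\e)=H_\e(u_\e)\nabla u_\e$, so $\Phi_\e(u_\e)$ is also bounded in $W^{1,2}(\Omega)$. In exact analogy with the identity above, the regularized equation now reads
$$
\int_\Omega\big[a_-(x)\nabla u_\e+(a_+(x)-a_-(x))\nabla\Phi_\e(u_\e)\big]\cdot\nabla\phi\,dx=0.
$$

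Passing to the limit is then essentially linear. Along a subsequence, $u_\e\rightharpoonup u$ in $W^{1,2}$, $u_\e\to u$ in $L^2$ and a.e., whence $u_\e^+\to u^+$ in $L^2$ and a.e.; combining with $|\Phi_\e(u_\e)-u_\e^+|\leq\e$ gives $\Phi_\e(u_\e)\to u^+$ in $L^2$ and a.e. By uniqueness of the weak $W^{1,2}$ limit this forces $\Phi_\e(u_\e)\rightharpoonup u^+$ in $W^{1,2}(\Omega)$. Pairing against the fixed $L^2$ vector fields $a_-(x)\nabla\phi$ and $(a_+(x)-a_-(x))\nabla\phi$ yields
$$
\int_\Omega\big[a_-(x)\nabla u+(a_+(x)-a_-(x))\nabla u^+\big]\cdot\nabla\phi\,dx=0,
$$
which by the opening identity is the weak form of \eqref{eq:main}; the boundary condition $u-g\in W_0^{1,2}(\Omega)$ survives because this affine subspace is weakly closed. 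The only real obstacle is passing to the limit in the nonlinear term $A_\e(x,u_\e)\nabla u_\e$---pointwise convergence of $A_\e(x,u_\e)$ to $A(x,u)$ can fail on $\{u=0\}$, whose measure is a priori uncontrolled---but the opening rewriting converts the problematic nonlinear limit into a linear one in $\nabla\Phi_\e(u_\e)$, which entirely sidesteps the difficulty.
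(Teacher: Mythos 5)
Your proposal is correct and follows essentially the same route as the paper: the authors likewise mollify the Heaviside function, introduce the primitive $\Psi_\e(t)=\int_{-\infty}^t\psi_\e$, derive a uniform $W^{1,2}$ bound on $\Psi_\e(u_\e)$, and pass to the limit using the weak convergence $\nabla\Psi_\e(u_\e)\rightharpoonup\nabla u^+$ to linearize the troublesome term. Your extra detail on the Schauder/Lax--Milgram construction of the regularized solutions is a fine elaboration of a step the paper simply asserts as standard.
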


\begin{proof} Let $\psi:\R\ra\R$ be defined by
\begin{equation*}
\psi(t):=\begin{cases}
1&\text{if }t>1,\\
t&\text{if }0\leq t\leq 1,\\
0&\text{if }t<0,
\end{cases}
\end{equation*}
and define $\psi_\e$ and $\Psi_\e$ for each $\e>0$ respectively by
\begin{equation*}
\psi_\e(t):=\psi\left(\frac{t}{\e}\right)\quad\text{and}\quad\Psi_\e(t):=\int_{-\infty}^t\psi_\e(s)ds.
\end{equation*}

Set $A_\e(x,s):=a_-(x)+(a_+(x)-a_-(x))\psi_\e(s)$. Since $x\mapsto A_\e(x,s)$ is measurable on $\Omega$ and $s\mapsto A(x,s)$ is Lipschitz on $\R$, there exists a weak solution $u_\e\in W^{1,2}(B_1)$ for
\begin{equation*}
\ddiv(A_\e(x,u_\e)\nabla u_\e)=0\quad\text{in }\Omega,
\end{equation*}
with $u_\e-g\in W_0^{1,2}(\Omega)$. It also follows from the uniform ellipticity of $A_\e(x,s)$, with ellipticity constants being $\lambda$ and $\frac{1}{\lambda}$ (see \eqref{eq:unif ellip}) independent of $\e>0$, we will have a uniform estimate
\begin{equation}\label{eq:unif est-ue}
\norm{u_\e}_{W^{1,2}(\Omega)}\leq C\norm{g}_{W^{1,2}(\Omega)},
\end{equation}
for some universal $C>0$.

From this fact we can derive another uniform estimate for $\Psi_\e(u_\e)$ in the norm $\norm{\cdot}_{W^{1,2}(\Omega)}$. Due to the fact that $\Psi_\e(s)\leq s$ for $s>0$ and $\Psi_\e(s)\equiv 0$ for $s\leq 0$, we obtain
\begin{equation*}
\int  \Psi_\e(u_\e)^2\leq \int  u_\e^2. 
\end{equation*}
Then we use the chain rule to derive
\begin{equation*}
\int |\nabla \Psi_\e(u_\e)|^2=\int |\psi_\e(u_\e)\nabla u_\e|^2\leq \int |\nabla u_\e|^2.
\end{equation*}
Therefore, 
\begin{equation}\label{eq:unif est-phie}
\norm{\Psi_\e(u_\e)}_{W^{1,2}(\Omega)}\leq \norm{u_\e}_{W^{1,2}(\Omega)}\leq C\norm{g}_{W^{1,2}(\Omega)}.
\end{equation}

Now we are in position to take limits. From \eqref{eq:unif est-ue} there is a subsequence $\e_k\searrow 0$ and $u\in W^{1,2}(\Omega)$ such that
\begin{equation}\label{eq:conv-ue}
\begin{split}
u_{\e_k}&\ra u\quad\text{strongly in }L^2(\Omega),\\
\nabla u_{\e_k}&\ra \nabla u\quad\text{weakly in }L^2(\Omega).
\end{split}
\end{equation}
Note that the strong $L^2$ convergence implies $u-g\in W_0^{1,2}(\Omega)$. 

Next we apply the estimate \eqref{eq:unif est-phie} along this subsequence. It allows us to extract a further subsequence, which we still denote by $\e_k$, and some $v\in W^{1,2}(\Omega)$ such that $\Psi_{\e_k}(u_{\e_k})\ra v$ strongly in $L^2(\Omega)$ and $\nabla \Psi_{\e_k}(u_{\e_k})\ra \nabla v$ weakly in $L^2(\Omega)$. However, since $\Psi_\e(s)\ra\max\{s,0\}$ uniformly in $\R$, we must have $v=u^+$. Also recall again that $\nabla\Psi_\e(u_\e)=\psi_\e(u_\e)\nabla u_\e$. Thus, rephrasing the convergence of $\{\Psi_{\e_k}(u_{\e_k})\}$, we arrive at
\begin{equation}\label{eq:conv-phie}
\begin{split}
\Psi_{\e_k}(u_{\e_k})&\ra u^+\quad\text{strongly in }L^2(\Omega),\\
\psi_\e(u_\e)\nabla u_\e&\ra \nabla u^+\quad\text{weakly in }L^2(\Omega).
\end{split}
\end{equation}

Now let us verify that the limit function $u$ is a weak solution for 
\begin{equation*}
\ddiv(A(x,u)\nabla u)=0\quad\text{in }\Omega.
\end{equation*}
Let $\phi\in C_c^\infty(\Omega)$ be a given. Then the weak convergence in \eqref{eq:conv-ue} and \eqref{eq:conv-phie} implies respectively that
\begin{equation*}
\lim_{k\ra\infty}\int  a_-\nabla u_{\e_k}\nabla\phi=\int  a_-\nabla u\nabla \phi,
\end{equation*}
and that
\begin{equation*}
\begin{split}
\lim_{k\ra\infty}\int  (a_+-a_-)\psi_{\e_k}(u_{\e_k})\nabla u_{\e_k}\nabla\phi&=\int (a_+-a_-)\nabla u^+\nabla\phi\\
&=\int (a_+-a_-)H(u)\nabla u\nabla\phi,
\end{split}
\end{equation*}
where we have used that $\nabla u^+=\nabla u$ a.e. on $\{u>0\}$ and $\nabla u^+=0$ a.e. on $\{u\leq 0\}$. Combining these two limits, we get
\begin{equation*}
\begin{split}
\int  A(x,u)\nabla u\nabla\phi&=\lim_{k\ra\infty}\int  A_{\e_k}(x,u_{\e_k})\nabla u_{\e_k}\nabla\phi =0,
\end{split}
\end{equation*}
as desired. 
\end{proof}

\begin{remark}\label{remark:exist} Note that we have not made any assumption on the regularity of $a_\pm$ in Proposition \ref{proposition:exist}. We also make another remark in the proof that the weak star convergence of $\psi_{\e_k}(u_{\e_k})\ra H(u)$ in $L_{loc}^\infty(\Omega)$ is in general not true. 
\end{remark}
 
\begin{notation}\label{notation:u} From now on $u$ is a nontrivial weak solution of \eqref{eq:main} in $\Omega$, unless otherwise stated. Also $\Omega^+(u):=\{u>0\}\cap\Omega$, $\Omega^-(u):=\{u<0\}\cap\Omega$ and $\Gamma(u):=\p\{u>0\}\cap\Omega$. Moreover, $L_\pm$ are the operators defined by $L_\pm u=\ddiv(a_\pm(x)\nabla u)$. We will also use the notation $A_z(s):=A(z,s)$ to emphasize that the $z$-argument is being fixed.
\end{notation} 

Although the free boundary condition is already dictated by the equation, it is by no means elementary to show that it holds at all free boundary points. Here we shall show a weak version of the free boundary condition, which holds for all free boundaries and not only nondegenerate ones. 

\begin{lemma}\label{lemma:weak-FB-condition} Suppose that $a_\pm$ are Dini continuous, and let $u$ be a weak solution for \eqref{eq:main} in $\Omega$. Assume further that sets $\{u>\e\}\cap\Omega$ and $\{u<-\e\}\cap\Omega$ have locally finite perimeter for each $\e>0$. Then 
\begin{equation}\label{eq:weak-FB-condition}
\lim_{\e\searrow 0}\int_{\p\{u>\e\}\cap\Omega}a_+|\nabla u|\eta = \lim_{\e\searrow 0}\int_{\p\{u<-\e\}\cap\Omega}a_-|\nabla u|\eta,
\end{equation}
for any $\eta\in C_c^\infty(\Omega)$.
\end{lemma}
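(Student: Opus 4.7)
The plan is to test the weak formulation of \eqref{eq:main} against a one-sided cutoff of $u$ that approximates $\chi_{\{u>\e\}}$ (resp.\ $\chi_{\{u<-\e\}}$), extract the resulting level-set integrals via the coarea formula, and identify the two $\e\searrow 0$ limits with the common flux coming from the PDE itself.

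Fixing $\e>0$ and a small $\delta>0$, let $\beta$ be the piecewise linear function that is $0$ on $(-\infty,\e]$, rises linearly from $0$ to $1$ on $[\e,\e+\delta]$, and equals $1$ on $[\e+\delta,\infty)$. Since $\eta\in C_c^\infty(\Omega)$, the composition $\phi:=\eta\,\beta(u)$ lies in $W_0^{1,2}(\Omega)$. Inserting $\phi$ into $\int A(x,u)\nabla u\cdot\nabla\phi=0$ and observing that both $\beta$ and $\beta'$ are supported in $\{u\ge\e\}$, on which $A(x,u)=a_+(x)$, yields
\begin{equation*}
\int a_+\beta(u)\nabla u\cdot\nabla\eta\,dx+\int a_+\,\eta\,\beta'(u)|\nabla u|^2\,dx=0.
\end{equation*}
The coarea formula rewrites the second term as $\int_\R\beta'(t)\int_{\{u=t\}}a_+\eta|\nabla u|\,d\cH^{n-1}\,dt$, and sending $\delta\searrow 0$ at any Lebesgue point $\e$ of the map $t\mapsto\int_{\{u=t\}}a_+\eta|\nabla u|\,d\cH^{n-1}$ produces, for a.e.\ $\e>0$,
\begin{equation*}
\int_{\{u>\e\}}a_+\nabla u\cdot\nabla\eta\,dx+\int_{\{u=\e\}}a_+|\nabla u|\eta\,d\cH^{n-1}=0.
\end{equation*}
The mirrored construction with $s\mapsto\beta(-s)$ gives, for a.e.\ $\e>0$,
\begin{equation*}
\int_{\{u<-\e\}}a_-\nabla u\cdot\nabla\eta\,dx-\int_{\{u=-\e\}}a_-|\nabla u|\eta\,d\cH^{n-1}=0.
\end{equation*}

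Letting $\e\searrow 0$, dominated convergence on the bulk integrals gives the limits $\int a_+\nabla u^+\cdot\nabla\eta$ and $-\int a_-\nabla u^-\cdot\nabla\eta$ respectively (using $\nabla u=-\nabla u^-$ on $\{u<0\}$), so the two boundary integrals admit limits as $\e\searrow 0$. On the other hand, expanding the original weak formulation tested against $\eta$ itself reads $\int a_+\nabla u^+\cdot\nabla\eta=\int a_-\nabla u^-\cdot\nabla\eta$. Combining these three facts equates the two boundary limits, which, once $\{u=\e\}$ is identified with $\p\{u>\e\}$ and $\{u=-\e\}$ with $\p\{u<-\e\}$, is precisely \eqref{eq:weak-FB-condition}.

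The main obstacle is this last identification together with controlling the coarea at a general level: continuity of $u$ (from De Giorgi--Nash, using continuity of $a_\pm$) implies $\{u>\e\}$ is open and $\p\{u>\e\}\subseteq\{u=\e\}$, and the assumed local finiteness of perimeter combined with the coarea representation $\int_B|\nabla u|\,dx=\int_\R\cH^{n-1}(\p^*\{u>t\}\cap B)\,dt$ forces $\p^*\{u>\e\}$ to coincide with $\{u=\e\}$ up to an $\cH^{n-1}$-null set on which $|\nabla u|$ vanishes $\cH^{n-1}$-a.e., for a.e.\ $\e$. A secondary subtlety is that the derivation only covers a.e.\ $\e>0$, but the bulk limit $\int_{\{u>\e\}}a_+\nabla u\cdot\nabla\eta$ has a limit along the \emph{full} sequence $\e\searrow 0$, forcing the boundary integrals to do so as well. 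Dini continuity of $a_\pm$ does not appear to play a decisive role in this weak identity; presumably it enters only if one wants to upgrade the statement to a classical (pointwise) trace identity on the free boundary.
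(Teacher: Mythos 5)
Your route is genuinely different from the paper's and reaches the same key identity, so let me compare. The paper does not use a truncation/coarea argument at all: it splits the weak formulation tested against $\eta$ itself into $\lim_{\e\searrow0}\int_{\{u>\e\}}a_+\nabla u\nabla\eta+\lim_{\e\searrow0}\int_{\{u<-\e\}}a_-\nabla u\nabla\eta=0$ (using $\nabla u=0$ a.e.\ on $\{u=0\}$), and then, for \emph{each fixed} $\e>0$, applies the Gauss--Green theorem on the finite-perimeter set $\{u>\e\}\cap\Omega$ to the field $a_+\eta\nabla u$; here the Dini continuity of $a_\pm$ is used precisely to get $u\in C^1(\Omega^+(u))$, so that $\nabla u$ has a classical trace on $\p\{u>\e\}$ and the measure-theoretic outward normal can be identified with $-\nabla u/|\nabla u|$, turning the flux into $-\int_{\p\{u>\e\}}a_+|\nabla u|\eta$. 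Your truncation-plus-coarea derivation of
\begin{equation*}
\int_{\{u>\e\}}a_+\nabla u\cdot\nabla\eta+\int_{\{u=\e\}}a_+|\nabla u|\eta\,d\cH^{n-1}=0
\end{equation*}
is a legitimate alternative that buys independence from the $C^1$ regularity (you correctly sense that Dini continuity is dispensable on your route), at the price of shifting the burden onto the identification of $\{u=\e\}$ with $\p\{u>\e\}$ up to $\cH^{n-1}$-negligible sets where $|\nabla u|$ vanishes --- which you handle at the level the paper itself does (the paper also silently conflates the topological and reduced boundaries).

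The one place where your argument is genuinely weaker is the ``a.e.\ $\e$'' issue, and your proposed repair does not work as stated. Your surface identity is only available at Lebesgue points of $t\mapsto\int_{\{u=t\}}a_+\eta|\nabla u|\,d\cH^{n-1}$, i.e.\ for a.e.\ $\e>0$. The convergence of the bulk integrals along the \emph{full} family $\e\searrow0$ therefore only controls the boundary integrals along a full-measure set of $\e$'s; at an exceptional $\e$ you have no identity relating $\int_{\p\{u>\e\}}a_+|\nabla u|\eta$ to anything, so nothing ``forces'' those values to converge. What you actually prove is the essential limit version of \eqref{eq:weak-FB-condition}. The paper's Gauss--Green argument avoids this entirely because the integration by parts is performed for every $\e$ for which $\{u>\e\}\cap\Omega$ has locally finite perimeter, which is assumed for all $\e>0$. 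To close your version you would either need to upgrade the Lebesgue-point statement to all $\e$ (e.g.\ by the same Gauss--Green step the paper uses, defeating the purpose), or be content with restating the lemma as an essential limit --- which, for the way this lemma is used, is harmless but is not literally what is claimed.
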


\begin{proof} Since $u\in W^{1,2}(\Omega)$, $\nabla u=0$ a.e. on $\{u=0\}$. Then  
\begin{equation*}
\begin{split}
0&=\int_{\Omega} A(x,u)  \nabla u \nabla \eta\\
&=
\lim_{\e\searrow 0}\int_{\{u>\epsilon \}} a_+ \nabla u \nabla   \eta +
\lim_{\e\searrow 0}\int_{\{u < -\epsilon \}}  a_- \nabla u \nabla \eta.
\end{split}
\end{equation*}
From the Dini continuity of $a_+$, we know that $u\in C^1(\Omega^+(u))$. Moreover, from the assumption that $\{u>\e\}\cap\Omega$ has locally finite perimeter, we may adopt integration by part and deduce that 
\begin{equation*}
\int_{\{u>\e\}\cap\Omega} a_+\nabla u\nabla \eta = \int_{\p\{u>\e\}\cap\Omega}a_+\nabla u\cdot\nu_{\{u>\e\}\cap\Omega}\eta, 
\end{equation*}
where $\nu_{\{u>\e\}\cap\Omega}$ is the measure theoretic outward unit normal of $\p\{u>\e\}\cap\Omega$; for the definition of measure theoretic unit normals, see Section 5 of \cite{EG}. In particular, if $x$ is a point on $\p\{u>\e\}\cap\Omega$ such that $\nabla u(x)\neq 0$, then $\nu_{\{u>\e\}\cap\Omega}(x) = -\nabla u(x) /|\nabla u(x)|$. Therefore, one may rephrase the surface integral on the right hand side by 
\begin{equation*}
\int_{\p\{u>\e\}\cap\Omega}a_+\nabla u\cdot\nu_{\{u>\e\}\cap\Omega}\eta = -\int_{\p\{u>\e\}\cap\Omega} a_+|\nabla u|\eta.
\end{equation*}
By a similar reason, one may also derive that 
\begin{equation*}
\int_{\{u<-\e\}\cap\Omega} a_-\nabla u \nabla \eta = \int_{\p \{u<-\e\}\cap\Omega} a_-|\nabla u|\eta. 
\end{equation*}
Summing all up, we arrive at 
\begin{equation*}
\begin{split}
0= -\lim_{\e\searrow 0}\int_{\p\{u>\e\}\cap\Omega} a_+|\nabla u|\eta + \lim_{\e\searrow 0}\int_{\p \{u<-\e\}\cap\Omega} a_-|\nabla u|\eta,
\end{split}
\end{equation*} 
as desired.
\end{proof}

\begin{remark}\label{remark:weak-FB-condition} The assumption that $\{u>\e\}\cap\Omega$ and $\{u<-\e\}\cap\Omega$ has locally finite perimeters is met, under the circumstance that $a_+$ and $a_-$ are Lipschitz continuous; see Theorem 5.2.1 in \cite{HL}. It is not easy, however, to derive the same conclusion even with H\"{o}lder regular $a_+$ and $a_-$.
\end{remark}

\section{Regularity of Solutions}\label{section:regularity}

In this section, we prove interior regularity of our solutions to \eqref{eq:main}. We know from the De Giorgi theory that solutions of \eqref{eq:main} is locally H\"{o}lder continuous for some universal exponent $0<\alpha<1$, even when our coefficients $a_\pm$ are assumed to be bounded measurable. Our first observation is that as we assume the continuity assumption \eqref{eq:cont}, our solutions become locally H\"{o}lder continuous for any exponent arbitrarily close to $1$. 

\begin{lemma}\label{lemma:Ca} For each $\alpha\in(0,1)$ there exists some $C_\alpha>0$ such that for $D\Subset\Omega$ and $z\in\{u=0\}\cap D$,
\begin{equation*}
|u(x)|\leq C_\alpha\norm{u}_{L^\infty(D)}\frac{|x-z|^\alpha}{d^\alpha}\quad\text{in }D,
\end{equation*}
where $d=\dist(D,\p\Omega)$.
\end{lemma}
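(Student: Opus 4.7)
The plan is to establish an oscillation-decay estimate at free-boundary points by a compactness/contradiction argument, and then iterate to obtain $C^\alpha$ for any $\alpha<1$. The underlying reason $C^\alpha$ regularity holds for every $\alpha<1$ is already flagged in the introduction: after blow-up at $z$, the limiting equation has constant coefficients, and the transformed quantity $w=a_+(z)v^+-a_-(z)v^-$ is harmonic, hence smooth, hence Lipschitz near its zeros. The extra ``room'' between Lipschitz and $C^\alpha$ ($\alpha<1$) is what allows us to beat the De Giorgi exponent.

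Concretely, I would reduce to the following dichotomy: for each $\alpha\in(0,1)$ there exist $\rho_0=\rho_0(\alpha,n,\lambda)\in(0,1/2)$ and $r_0=r_0(\alpha,n,\lambda,\omega)>0$ such that if $u$ is a weak solution of \eqref{eq:main} in $B_r(z)\subset\Omega$ with $r\leq r_0$, $u(z)=0$, and $\|u\|_{L^\infty(B_r(z))}\leq 1$, then $\sup_{B_{\rho_0 r}(z)}|u|\leq\rho_0^\alpha$. Dyadic iteration (the rescaled solution $\tilde u(y)=u(z+\rho_0 r y)/\rho_0^\alpha$ solves the same type of PDE, with coefficients of strictly smaller modulus of continuity, so the hypotheses are preserved) gives $\sup_{B_{\rho_0^k r}(z)}|u|\leq\rho_0^{k\alpha}$ for all $k\geq 0$. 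A standard covering/scaling argument at the initial scale, together with the De Giorgi--Nash bound for $\|u\|_{L^\infty(B_{d/2}(z))}$ in terms of $\|u\|_{L^\infty(D)}$, then upgrades this into the stated estimate.

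I would prove the dichotomy by contradiction. Assuming failure, extract sequences $u_k$, $z_k$, and $r_k\to 0$ satisfying the hypotheses but violating the conclusion for a $\rho_0$ to be fixed at the end, so that $\sup_{B_{\rho_0 r_k}(z_k)}|u_k|>\rho_0^\alpha$. Set $v_k(y):=u_k(z_k+r_k y)$ on $B_1$. Then $v_k$ satisfies \eqref{eq:main} with coefficients $a_\pm^{(k)}(y):=a_\pm(z_k+r_k y)$, whose moduli of continuity on $B_1$ are $\omega(r_k\,\cdot\,)\to 0$; passing to subsequences, $a_\pm^{(k)}\to a_\pm^\infty\in[\lambda,1/\lambda]$ uniformly and (by De Giorgi--Nash plus Arzel\`a--Ascoli) $v_k\to v_\infty$ uniformly on $\overline{B_{1/2}}$. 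Using $H(v)\nabla v=\nabla v^+$, the equation for $v_k$ reads
\[
\int\bigl[a_+^{(k)}(y)\,\nabla v_k^+ - a_-^{(k)}(y)\,\nabla v_k^-\bigr]\cdot\nabla\phi=0\qquad\forall\phi\in C_c^\infty(B_{1/2}).
\]
Uniform $L^2$-bounds on $\nabla v_k^\pm$ together with strong $L^2$-convergence of $v_k^\pm$ force $\nabla v_k^\pm\rightharpoonup\nabla v_\infty^\pm$ weakly in $L^2(B_{1/2})$; combined with the uniform convergence of the coefficients, this identifies $w_\infty:=a_+^\infty v_\infty^+ - a_-^\infty v_\infty^-$ as a harmonic function on $B_{1/2}$ satisfying $|w_\infty|\leq 1/\lambda$ and $w_\infty(0)=0$. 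Interior gradient estimates for harmonic functions give $|v_\infty(y)|\leq C|y|/\lambda^2$ on $B_{1/4}$. Choosing $\rho_0\leq 1/4$ so small that $C\rho_0/\lambda^2<\rho_0^\alpha$ (possible since $\alpha<1$), the uniform convergence $v_k\to v_\infty$ on $\overline{B_{\rho_0}}$ contradicts $\sup_{B_{\rho_0}}|v_k|>\rho_0^\alpha$.

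The principal obstacle is the passage to the weak limit, since $A(x,\cdot)$ is discontinuous in its second argument and nonlinearities do not in general commute with weak $L^2$ convergence. The remedy, exactly as in the proof of Proposition \ref{proposition:exist}, is to absorb the Heaviside into the sign decomposition via $H(v)\nabla v=\nabla v^+$; once this is done, the uniform convergence $v_k\to v_\infty$ supplied by De Giorgi--Nash makes the limit procedure essentially linear, and the argument reduces to standard interior estimates for harmonic functions.
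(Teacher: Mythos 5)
Your argument is correct, and it reaches the conclusion by a different route than the paper, although both proofs hinge on the identical mechanism: after blow-up the coefficients freeze and the transform $w=a_+v^+-a_-v^-$ becomes harmonic. The paper runs a single global contradiction: it normalizes by the largest radius at which the $\alpha$-growth fails (dividing by $jr_j^\alpha$), so the blow-up limit is an \emph{entire} nontrivial solution with $|v(x)|\leq|x|^\alpha$ for $|x|\geq 1$; the associated $w$ is then an entire harmonic function of sublinear growth, hence constant by Liouville, hence zero since $v(0)=0$, contradicting $\sup_{B_1}|v|=1$. You instead prove a one-scale oscillation-decay dichotomy by compactness, close it with the interior gradient estimate for a bounded harmonic function vanishing at the origin (choosing $\rho_0$ so that $C_n\rho_0/\lambda^2<\rho_0^\alpha$, which is exactly where $\alpha<1$ enters), and then iterate dyadically. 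Your version avoids the Liouville theorem and the maximal-radius normalization trick at the cost of the iteration bookkeeping; it is also the formulation that generalizes most directly to the improvement-of-flatness scheme used later in Section \ref{section:higher reg}. The limit identification step (absorbing the Heaviside via $H(v)\nabla v=\nabla v^+$ and passing to weak limits of $\nabla v_k^{\pm}$ using strong $L^2$ convergence of $v_k^{\pm}$) is the same in both proofs. Two cosmetic points: in the inductive step the correct rescaling is $\tilde u(y)=u(z+\rho_0^k y)/\rho_0^{k\alpha}$ on $B_{r}$ (your formula carries a spurious factor of $r$ in the dilation), and logically $\rho_0$ should be fixed from the harmonic estimate before the contradiction sequence is extracted; neither affects the validity of the argument.
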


\begin{proof} By a scaling argument, it suffices to show it only for $\Omega=B_1$, $D=B_{1/2}$ and $u$ such that $\norm{u}_{L^\infty(D)}=1$.

By the De Giorgi theory, there is some universal $\bar{\alpha}\in(0,1)$ for which the statement of this proposition is true. Thus, we only need to consider $\alpha\in(\bar{\alpha},1)$. 

Suppose towards a contradiction that there exist solutions $u_j$ of \eqref{eq:main} in $B_1$, $x^j\in\Gamma\cap B_{1/2}$ and $r_j<1/4$ such that $r_j\searrow 0$ and
\begin{equation*}
\sup_{B_{r_j}(x^j)}|u_j|=jr_j^\alpha\quad\text{and}\quad\sup_{B_r(x^j)}|u_j|\leq jr^\alpha\quad\text{for all }r_j\leq r\leq 1/4.
\end{equation*}
Consider
\begin{equation*}
v_j(x):=\frac{u_j(r_jx+x^j)}{jr_j^\alpha},\quad x\in B_{1/4r_j}.
\end{equation*}
Then $v_j$ solves 
\begin{equation*}
\ddiv(A_j(x,v_j)\nabla v_j)=0\quad\text{in }B_{1/4r_j},
\end{equation*}
where $A_j(x,s):=A(r_jx+x^j,s)$, and that
\begin{equation*}
\sup_{B_1}|v_j|=1\quad\text{and}\quad\sup_{B_R}|v_j|\leq R^\alpha\quad\text{for all }1\leq R\leq 1/(4r_j).
\end{equation*}

Notice that $\lambda\leq A_j(x,s)\leq\frac{1}{\lambda}$ for all $(x,s)\in\R^n\times\R$. Thus the uniform supremum estimate above yields that there is some $v\in W_{loc}^{1,2}(\R^n)\cap C_{loc}^{\bar{\alpha}}(\R^n)$ such that
\begin{equation}\label{eq:conv-Ca}
\begin{split}
v_j&\ra v\quad\text{locally uniformly in }\R^n,\\
\nabla v_j&\ra\nabla v\quad\text{weakly in }L_{loc}^2(\R^n),
\end{split}
\end{equation}
along a subsequence, which we still denote by $j$, and that
\begin{equation}\label{eq:v-Ca}
\sup_{B_1}|v|=1\quad\text{and}\quad\sup_{B_R}|v|\leq R^\alpha,\quad\forall R\geq 1.
\end{equation}

Let us derive the equation which $v$ satisfies. Recall that $x^j\in B_{1/2}$, so there is $x^0\in\bar{B}_{1/2}$ to which $x^j$ is convergent along a subsequence. Let us again denote this convergent subsequence by $j$. Then the continuity assumption \eqref{eq:cont} on $a_\pm$\footnote{We make a remark here that only continuity is used.} and the weak convergence in \eqref{eq:conv-Ca} imply that
\begin{equation*}
\begin{split}
0&=\lim_{j\ra\infty}\int A_j(x,v_j)\nabla v_j\nabla\phi\\
&=\lim_{j\ra\infty}\left[\int a_+(r_jx+x^j)\nabla v_j^+\nabla\phi-\int a_-(r_jx+x^j)\nabla v_j^-\nabla\phi\right]\\
&=\int a_+(x^0)\nabla v^+\nabla\phi-\int a_-(x^0)\nabla v^-\nabla\phi\\
&=\int A_{x^0}(v)\nabla v\nabla\phi,
\end{split}
\end{equation*}
for any $\phi\in C_c^\infty(\R^n)$. Thus, $v$ is a weak solution to
\begin{equation}\label{eq:v}
\ddiv(A_{x^0}(v)\nabla v)=0\quad\text{in }\R^n.
\end{equation}

Now define 
\begin{equation*}
w:=a_+(x^0)v^+-a_-(x^0)v^-\quad\text{in }\R^n.
\end{equation*}
It immediately follows from \eqref{eq:v} and the definition of $A_{x^0}(v)$ that $w$ is harmonic in the entire space $\R^n$. From \eqref{eq:v-Ca}, we have 
\begin{equation*}
|w(x)|\leq\frac{1}{\lambda}|v(x)|\leq\frac{1}{\lambda}|x|^\alpha\quad\text{for }|x|\geq 1.
\end{equation*}
Hence, the Liouville theorem implies that $w$ is a constant function in $\R^n$. Now from the uniform convergence in \eqref{eq:conv-Ca}, we have $v(0)=\lim_{j\ra\infty}v_j(0)=0$. Thus, 
\begin{equation*}
w\equiv w(0)=0\quad\text{in }\R^n.
\end{equation*}
However, from the equality in \eqref{eq:v-Ca},
\begin{equation*}
\sup_{B_1}|w|=\sup_{B_1}(a_+(x^0)v^++a_-(x^0)v^-)\geq\lambda\sup_{B_1}|v|=\lambda>0,
\end{equation*}
a contradiction.
\end{proof}

By assigning higher regularity on $\omega$, we are able to improve Lemma \ref{lemma:Ca} to Lipschitz regularity.

\begin{lemma}\label{lemma:lip} Suppose that $a_+$ and $a_-$ satisfy \eqref{eq:cont} with a Dini continuous $\omega$.\footnote{A modulus of continuity $\omega$ is said to be Dini continuous, if $\int_0^1\frac{\omega(r)}{r}dr<\infty$.} There exists some $C>0$ such that for $D\Subset \Omega$ and $z\in\{u=0\}\cap D$, 
\begin{equation*}
|u(x)|\leq C\norm{u}_{L^\infty(D)}\frac{|x-z|}{d}\quad\text{in }D,
\end{equation*}
where $d=\dist(D,\p\Omega)$.
\end{lemma}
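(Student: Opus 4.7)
The plan is a proof by contradiction and compactness, with the new ACF monotonicity formula (Theorem \ref{theorem:acf}) as the key ingredient. As a preliminary reduction, if $u$ does not change sign on some neighborhood of $z\in D$, then since $\nabla u=0$ a.e.\ on $\{u=0\}$, the function $u$ is a weak solution of the linear equation $\ddiv(a_+\nabla u)=0$ (or $\ddiv(a_-\nabla u)=0$) in that neighborhood. Dini continuity of $a_\pm$ then yields $u\in C^1$, which certainly gives the desired Lipschitz bound. It therefore suffices to consider the case $z\in\Gamma(u)$ and to bound $M(r):=\sup_{B_r(z)}|u|$ linearly in $r$.

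Suppose for contradiction that such a bound fails. Then for every integer $k$ one may, exploiting boundedness of $u$ and continuity of $M$, take the largest $r_k>0$ with $M(r_k)/r_k=k$; by maximality $r_k\searrow 0$ and $M(r)/r\le k$ for $r\in[r_k,r_0]$ with $r_0$ a fixed small radius. Consider the rescalings $v_k(x):=u(r_kx+z)/M(r_k)$, which satisfy $\ddiv(A(r_kx+z,v_k)\nabla v_k)=0$, together with $v_k(0)=0$, $\sup_{B_1}|v_k|=1$, and the linear envelope $|v_k(x)|\le |x|$ for $1\le |x|\le r_0/r_k$. From the $C^\alpha$ estimate of Lemma \ref{lemma:Ca} applied to $v_k$ on compact sets together with the linear envelope, a subsequence converges locally uniformly on $\R^n$ to a function $v$; freezing the coefficients at $z$ exactly as in the proof of Lemma \ref{lemma:Ca} gives that $v$ solves $\ddiv(A_z(v)\nabla v)=0$ on $\R^n$, with $v(0)=0$, $\sup_{B_1}|v|=1$, and $|v(x)|\le |x|$ for $|x|\ge 1$.

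Apply Theorem \ref{theorem:acf} to the pair $(u^+,u^-)$ with the operators $(L_+,L_-)$ at $z$; Dini continuity of $\omega$ is precisely what makes the error terms absorbable, yielding a universal bound
\begin{equation*}
\Phi_u(r):=\frac{1}{r^4}\int_{B_r(z)}\frac{a_+|\nabla u^+|^2}{|x-z|^{n-2}}\,dx\cdot \int_{B_r(z)}\frac{a_-|\nabla u^-|^2}{|x-z|^{n-2}}\,dx\le C
\end{equation*}
for all $r\le r_0/2$. A direct change of variables gives the scaling identity $\Phi_{v_k}(R)=\Phi_u(Rr_k)/k^4$ for $Rr_k\le r_0/2$, so $\Phi_{v_k}(R)\to 0$ for each fixed $R>0$. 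Weak lower semicontinuity of each factor under $\nabla v_k\rightharpoonup\nabla v$ in $L^2_{\mathrm{loc}}$, combined with the uniform convergence $a_\pm(r_k\cdot+z)\to a_\pm(z)$ on compacts, yields $\Phi_v(R)=0$ for every $R$. Hence on every ball $B_R$ one of the factor integrals vanishes, forcing (by the uniform positivity of the weight) $\nabla v^+\equiv 0$ or $\nabla v^-\equiv 0$ on $B_R$, and since $v(0)=0$ the function $v$ must be one-signed on all of $\R^n$. Without loss of generality $v\ge 0$; the limit equation then reduces to $a_+(z)\Delta v=0$, so $v$ is a nonnegative harmonic function on $\R^n$ with $v(0)=0$. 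The strong minimum principle (or Liouville) forces $v\equiv 0$, contradicting $\sup_{B_1}|v|=1$.

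The chief obstacle is the ACF monotonicity formula itself: to carry out the scaling argument above one needs Theorem \ref{theorem:acf}, which must handle two \emph{different} operators with variable Dini continuous coefficients simultaneously on concentric spheres, and whose proof (Appendix \ref{section:apndx}) is the delicate technical heart of the paper. All remaining steps --- the reduction to free boundary points, the maximal rescaling, the scaling identity for $\Phi$, the passage to a blowup limit, and the Liouville conclusion --- are, by comparison, routine once the monotonicity formula is in hand.
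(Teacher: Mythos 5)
Your proposal is correct and follows essentially the same route as the paper's own proof: a contradiction/blowup argument at linear scaling, the ACF estimate (Proposition \ref{proposition:acf-est}, of which Theorem \ref{theorem:acf} is the matrix generalization) combined with the scaling identity $\Phi_{v_k}(R)=k^{-4}\Phi(Rr_k,z,u^+,u^-)$ and weak lower semicontinuity to force one of $\nabla v^{\pm}$ to vanish, and then Liouville/maximum principle to kill the limit. The one point to tighten is that, to obtain the \emph{universal} constant $C$ asserted in the lemma, the contradiction must be run along a sequence of solutions $u_j$ and points $x^j$ (as in the paper) rather than for a single fixed $u$ and $z$; since every estimate you use is uniform in $(u,z)$, this is a purely notational upgrade.
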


\begin{proof} As in the proof of Lemma \ref{lemma:Ca}, we simplify the setting and argue by contradiction. Suppose that there exist solutions $u_j$ of \eqref{eq:main} in $B_1$, $x^j\in\Gamma\cap B_{1/2}$ and $r_j<1/4$ such that $r_j\searrow 0$ and
\begin{equation*}
\sup_{B_{r_j}(x^j)}|u_j|=jr_j\quad\text{and}\quad\sup_{B_r(x^j)}|u_j|\leq jr\quad\text{for all }r_j\leq r\leq 1/4.
\end{equation*}
We know that $x^j\ra x^0$ for some $x^0\in\bar{B}_{1/2}$ up to a subsequence, and let us overwrite the indices of this subsequence by $j$. 

Consider
\begin{equation*}
v_j(x):=\frac{u_j(r_jx+x^j)}{jr_j},\quad x\in B_{1/4r_j}.
\end{equation*}
Then following the argument in the proof of Lemma \ref{lemma:Ca}, there is some $v\in W_{loc}^{1,2}(\R^n)\cap C_{loc}^\alpha(\R^n)$ with a universal $\alpha\in(0,1)$ such that \eqref{eq:conv-Ca} is true along a subsequence, which we still denote by $j$, and that
\begin{equation}\label{eq:v-lip}
\sup_{B_1}|v|=1. 
\end{equation}
Moreover, $v$ is a weak solution to \eqref{eq:v}. Since $\nabla v_j\ra\nabla v$ weakly in $L_{loc}^2(\R^n)$, 
\begin{equation*}
|x|^{\frac{2-n}{2}}\nabla v_j^\pm\ra |x|^{\frac{2-n}{2}}\nabla v^\pm\quad\text{weakly in }L_{loc}^2(\R^n).
\end{equation*}
As a result, for each $R\geq 1$, 
\begin{equation*}
\int_{B_R}\frac{|\nabla v^\pm|^2}{|x|^{n-2}}\leq\varliminf_{j\ra\infty}\int_{B_R}\frac{|\nabla v_j^\pm|^2}{|x|^{n-2}}.
\end{equation*}
Now apply to $u_j$ the ACF estimate, that is, Proposition \ref{proposition:acf-est}, to arrive at
\begin{equation*}
\Phi(R,v_j^+,v_j^-)=\frac{1}{j^4}\Phi(Rr_j,x^j,u_j^+,u_j^-)\leq \frac{256C}{j^4},
\end{equation*}
for all $j$ such that $r_j\leq\frac{r_0}{4R}$, where $r_0$ and $C$ are chosen as in Proposition \ref{proposition:acf-est}; see \eqref{eq:acf} for the definition of $\Phi$. Thus,
\begin{equation*}
\begin{split}
\Phi(R,v^+,v^-)&\leq\varliminf_{j\ra\infty}\Phi(R,v_j^+,v_j^-)\leq\varliminf_{j\ra\infty}\frac{256C}{j^4}=0,
\end{split}
\end{equation*}
for each $R\geq 1$. This implies that either $\nabla v^+$ or $\nabla v^-$ vanishes a.e. in $\R^n$.

Without losing any generality we may assume $\nabla v^+=0$ a.e. in $\R^n$. Then the continuity of $v^+$ implies that $v^+\equiv 0$ in $\R^n$. In view of \eqref{eq:v}, this implies that $v^-$ is harmonic in the entire space $\R^n$. Then the maximum principle implies that $v^-\equiv 0$ in $\R^n$. Finally, we obtain $v\equiv 0$ in $\R^n$, which violates \eqref{eq:v-lip}. 
\end{proof}

Now we are ready to state our first main result.

\begin{theorem}\label{theorem:opt reg}
Let $a_+$ and $a_-$ satisfy \eqref{eq:cont} with a Dini continuous $\omega$ and let $u$ be a weak solution to \eqref{eq:main} in $\Omega$. Then $u\in W_{loc}^{1,\infty}(\Omega)$ and for any $D\Subset \Omega$, 
\begin{equation}\label{eq:opt reg}
\norm{\nabla u}_{L^\infty(D)}\leq\frac{C\norm{u}_{L^2(\Omega)}}{d^{\frac{n}{2}+1}},
\end{equation}
where $C$ depends only on $n$, $\lambda$ and $\omega$, and $d=\dist(D,\p\Omega)$. 
\end{theorem}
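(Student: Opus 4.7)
The plan is to upgrade the linear growth at free boundary points provided by Lemma~\ref{lemma:lip} to a pointwise gradient bound, by combining it with the classical interior gradient estimate for divergence form elliptic equations whose coefficient is Dini continuous. After a standard rescaling it suffices to treat $\Omega=B_1$ and $D=B_{1/2}$; since the De Giorgi--Nash--Moser theory provides $\norm{u}_{L^\infty(B_{3/4})}\le C\norm{u}_{L^2(B_1)}$, what remains is to establish the $L^\infty$-based bound $|\nabla u(x_0)|\le C\norm{u}_{L^\infty(B_{3/4})}$ uniformly for $x_0\in B_{1/2}$, with $C$ depending only on $n,\lambda,\omega$.

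Fix such an $x_0$ and set $\rho:=\min\{\dist(x_0,\Gamma(u)),\,1/8\}$. Since the ball $B_\rho(x_0)$ avoids $\Gamma(u)=\partial\{u>0\}\cap B_1$ and is connected, either $B_\rho(x_0)\subset\{u>0\}$ or $B_\rho(x_0)\subset\{u\le 0\}$. In either case $u$ is a weak solution of the linear equation $L_+u=0$ or $L_-u=0$ on $B_\rho(x_0)$, the ambiguity in the second case being resolved by $\nabla u=0$ a.e.\ on $\{u=0\}$, so the Dini continuity of $a_\pm$ together with the classical interior $C^1$-estimate for divergence form equations yields
\begin{equation*}
|\nabla u(x_0)|\le \frac{C}{\rho}\sup_{B_\rho(x_0)}|u|.
\end{equation*}
If $\rho=1/8$, the right-hand side is already controlled by $C\norm{u}_{L^\infty(B_{3/4})}$. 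If $\rho<1/8$, pick $z\in\Gamma(u)$ with $|x_0-z|=\rho$; then $B_\rho(x_0)\subset B_{2\rho}(z)\subset B_{3/4}$, and Lemma~\ref{lemma:lip} applied with $D=B_{3/4}$ yields $\sup_{B_{2\rho}(z)}|u|\le C\rho\norm{u}_{L^\infty(B_{3/4})}$, so the factor $\rho$ cancels and the same bound $|\nabla u(x_0)|\le C\norm{u}_{L^\infty(B_{3/4})}$ holds. Chaining with De Giorgi and undoing the scaling yields \eqref{eq:opt reg}.

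The only nontrivial ingredient is the $L^\infty$-to-$C^1$ interior estimate for $\ddiv(a(x)\nabla u)=0$ with $a$ uniformly elliptic and Dini continuous, whose constant depends only on $n,\lambda$ and the Dini modulus of $a$. This is the standard Campanato/frozen-coefficient perturbation result, and the Dini assumption of the theorem is tailored precisely to this tool (the same hypothesis drove Lemma~\ref{lemma:lip}, so its reappearance here is natural). The main obstacle is really conceptual rather than technical: once Lemma~\ref{lemma:lip} is in hand, the Lipschitz bound follows from purely classical interior elliptic estimates applied on each side of the free boundary separately, and all the delicate free boundary analysis has already been absorbed into Lemma~\ref{lemma:lip} via the ACF monotonicity formula of Theorem~\ref{theorem:acf}.
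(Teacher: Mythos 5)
Your proof is correct and follows essentially the same route as the paper's: split into points far from and near the degenerate set, and near it combine the interior $C^1$ estimate for the one-phase Dini-coefficient equation with the linear decay supplied by Lemma~\ref{lemma:lip}, so that the factor $\rho$ cancels the $\rho^{-1}$ from the gradient estimate. The only cosmetic differences are that the paper measures distance to $\{u=0\}$ rather than to $\Gamma(u)$ and routes the bound through the Harnack inequality ($|\nabla v(0)|\le C\,v(0)$) before invoking Lemma~\ref{lemma:lip}, whereas you bound $\sup_{B_\rho(x_0)}|u|$ directly via Lemma~\ref{lemma:lip} at a nearest free boundary point; both work (modulo a harmless adjustment of the radii so that $B_{2\rho}(z)\subset B_{3/4}$).
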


\begin{proof} Fix $D\Subset\Omega$ with $d=\dist(D,\p\Omega)>0$ and choose $\tilde{D}$ such that $D\Subset\tilde{D}\Subset\Omega$ and $\dist(D,\p\tilde{D})=\dist(\tilde{D},\p\Omega)=d/2$. Write $M=\norm{u}_{L^2(\Omega)}$. By the local boundedness of weak solutions (Theorem 8.17 in \cite{GT}), we know that $K:=\norm{u}_{L^\infty(\tilde{D})}\leq cM/d^{n/2}$ for a constant $c$ depending only on $n$ and $\lambda$. 

Choose any $z\in D$ such that $u(z)\neq 0$ and set $r=\dist(z,\{u=0\}\cap D)$. If $r\geq d/2$, then the standard regularity theory yields that $|\nabla u(z)|\leq C_0K/d$. 

On the other hand, if $r<d/2$, then we consider the scaled function $v(x):=u(rx+z)/r$ in $B_1$. Suppose that $u(z)>0$. Then $v$ is a nonnegative weak solution for $\ddiv(a_+(rx+z)\nabla v)=0$ in $B_1$. Since $a_+$ is Dini continuous, the standard regularity theory yields that $v\in C^1(B_1)$. Then the Harnack inequality implies that $|\nabla v(0)|\leq C_1 v(0)$. By means of Lemma \ref{lemma:lip}, we have $v(0)=u(z)/r\leq C_2K/d$. 

A similar argument applies to the case $u(z)<0$. Using $K\leq cM/d^{\frac{n}{2}+1}$, we finish the proof. 
\end{proof}

%
%

\section{Preliminaries properties of  the Free Boundary}\label{section:prelim}
It is noteworthy that the zero set of $u$ may have vanishing points of infinite order, or even contain an open subset while keeping the solution from being trivial. This is true for general uniformly elliptic partial differential equations with only H\"{o}lder continuous coefficients; e.g., see \cite{M}. For this reason, we may expect the same situation to occur in our case; note that if $v$ solves $\ddiv(a(x)\nabla v)=0$, then $v^+/\beta_+ - v^-/\beta_-$ for some distinct $\beta_\pm>0$ solves \eqref{eq:main} with $a_\pm := a(x)/\beta_\pm$, and has the same zero level set with $v$. Nevertheless, we still the following lemma due to the Harnack inequality. 

\begin{lemma}\label{lemma:gamma} Let $u$ be a weak solution of \eqref{eq:main} in $\Omega$. Then $\Gamma(u)=\p\{u<0\}\cap \Omega$. 
\end{lemma}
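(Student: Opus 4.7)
My plan is to prove both inclusions $\Gamma(u)=\p\{u>0\}\cap\Omega\subseteq\p\{u<0\}\cap\Omega$ and its reverse by contradiction, using the strong minimum principle for one of the two single-phase operators $L_\pm$ in any ball where the opposite phase is absent.

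First I would record a few preliminary observations. By Lemma \ref{lemma:Ca}, $u$ is continuous in $\Omega$, so the sets $\{u>0\}$ and $\{u<0\}$ are open. In particular, any $z\in\p\{u>0\}\cap\Omega$ satisfies $u(z)=0$: continuity yields both $u(z)\ge 0$ (from approximation by points of $\{u>0\}$) and $u(z)\le 0$ (from the fact that $z\notin\{u>0\}$). An analogous statement holds for $\p\{u<0\}\cap\Omega$.

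Now suppose $z\in\p\{u>0\}\cap\Omega$ but $z\notin\p\{u<0\}\cap\Omega$. Since $z\notin\{u<0\}$, the point $z$ lies in the interior of $\{u\ge 0\}$, hence there exists $r>0$ with $B_r(z)\Subset\Omega$ and $u\ge 0$ in $B_r(z)$. I then claim that $u$ is a weak solution of $L_+u=\ddiv(a_+(x)\nabla u)=0$ in $B_r(z)$. Indeed, for any $\phi\in W_0^{1,2}(B_r(z))$,
\begin{equation*}
0=\int_{B_r(z)}A(x,u)\nabla u\cdot\nabla\phi=\int_{B_r(z)\cap\{u>0\}}a_+(x)\nabla u\cdot\nabla\phi=\int_{B_r(z)}a_+(x)\nabla u\cdot\nabla\phi,
\end{equation*}
where the two final equalities use $A(x,u)=a_+(x)$ on $\{u>0\}$ together with the standard fact that $\nabla u=0$ a.e.\ on $\{u=0\}$ for a $W^{1,2}$ function $u$. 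Since $a_+$ is continuous and uniformly elliptic, the strong minimum principle (equivalently, the Harnack inequality) applied to the nonnegative solution $u$ with interior minimum $u(z)=0$ forces $u\equiv 0$ in $B_r(z)$. This contradicts $z\in\p\{u>0\}$, which requires points with $u>0$ arbitrarily close to $z$. Thus $\p\{u>0\}\cap\Omega\subseteq\p\{u<0\}\cap\Omega$.

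The reverse inclusion is entirely symmetric: if $z\in\p\{u<0\}\cap\Omega$ were not in $\p\{u>0\}\cap\Omega$, there would be a ball $B_r(z)\Subset\Omega$ in which $u\le 0$; the same computation (using $A(x,u)=a_-(x)$ on $\{u<0\}$ and $\nabla u=0$ a.e.\ on $\{u=0\}$) shows that $-u\ge 0$ is a weak solution of $L_-(-u)=0$ in $B_r(z)$ with interior minimum at $z$, giving $u\equiv 0$ in $B_r(z)$, a contradiction. I do not foresee any genuine obstacle; the only subtlety to verify carefully is the elementary but essential identity $A(x,u)\nabla u=a_+(x)\nabla u$ a.e.\ in $\{u\ge 0\}$ (respectively $a_-(x)\nabla u$ a.e.\ in $\{u\le 0\}$), which relies exclusively on $\nabla u$ vanishing a.e.\ on the level set $\{u=0\}$ and absorbs the ambiguity in the Heaviside convention used to define $A$.
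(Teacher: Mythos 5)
Your proof is correct. The reduction to the single-phase equation on a ball where $u\ge 0$ is properly justified (via $\nabla u=0$ a.e.\ on $\{u=0\}$, which also absorbs the Heaviside convention), and the strong minimum principle for $L_+$ then forces $u\equiv 0$ near $z$, contradicting $z\in\p\{u>0\}$; the reverse inclusion is symmetric as you say. The paper uses the same underlying tool, the De Giorgi--Nash--Moser Harnack inequality, but organizes the argument differently: rather than arguing by contradiction and passing to $L_\pm$, it applies Harnack directly to $v=u-\inf_B u\ge 0$, viewed as a nonnegative solution of $\ddiv(A(x,u)\nabla v)=0$ with the full coefficient frozen as a bounded measurable function of $x$, on an arbitrary ball $B$ centered at $z\in\Gamma(u)$. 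This yields the quantitative estimate \eqref{eq:harnack}, $0<\sup_{B'}u\le -(C-1)\inf_B u$, which immediately gives $\inf_B u<0$ and, unlike your qualitative contradiction argument, is reused with explicit constants later (in the proof of Lemma \ref{lemma:N}, to get $\inf_{B_2}u_j\le -c_2\eta$ from $\sup_{B_1}u_j\ge c_1\eta$). Conversely, your route shows that the single-phase reduction is not even needed for the lemma itself: one could apply Harnack to $u\ge0$ directly as a solution with the bounded measurable coefficient $A(x,u)$, exactly as the paper does. Two microscopic points: Harnack on $B_r(z)$ directly gives $u\equiv0$ only on the half ball $B_{r/2}(z)$ (which already suffices for your contradiction, or one propagates to all of $B_r(z)$ by the usual open--closed argument); and continuity of $u$ needs only the De Giorgi H\"older estimate for bounded measurable coefficients, not Lemma \ref{lemma:Ca}, so the lemma holds without invoking \eqref{eq:cont} --- consistent with the paper's statement, which imposes no regularity on $a_\pm$ here.
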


\begin{proof} Let $z\in\Gamma(u)$ and let us show that $z\in\p\{u<0\}\cap \Omega$ as well. Take any open ball $B$ centered at $z$ and the concentric open ball $B'$ with half the radius. By the choice of $z$, we have $\sup_Uu>0$ and $\inf_Uu\leq 0$ for both $U=B$ and $B'$. Consider $v=u-\inf_Bu$ which is nonnegative on $B$. Since $v$ satisfies $\ddiv(A(x,u)\nabla v)=0$ in $B$, it follows from the Harnack inequality for bounded measurable coefficients that $\sup_{B'}v\leq C\inf_{B'}v$ with some universal $C>1$, whence 
\begin{equation}\label{eq:harnack}
0<\sup_{B'}u\leq C\inf_{B'}u-(C-1)\inf_{B}u\leq -(C-1)\inf_{B}u.
\end{equation}
Therefore, we can choose $y\in B$ such that $u(y)<0$. Since $B$ was chosen to be an arbitrary open ball centered at $z$, we get that $z\in\p\{u<0\}\cap \Omega$. It implies that $\Gamma(u)\subset\p\{u<0\}\cap \Omega$. By a similar argument we obtain the reverse inclusion, so we conclude that $\Gamma(u)=\p\{u<0\}\cap \Omega$.
\end{proof}

We are interested in the measure $\mu$ defined by $d\mu=L_+u^+$; i.e.,
\begin{equation*}
\int\phi d\mu:=\int a_+\nabla u^+\nabla\phi,\quad\forall \phi\in C_c^\infty(\Omega).
\end{equation*}

Let us make a basic observation on the measure $\mu$. 

\begin{lemma}\label{lemma:pos-msr} The measure $\mu$ is a positive Radon measure with $\spt(\mu)\subset\Gamma(u)$. Moreover, $\mu$ is locally finite and satisfies for any $B_{2r}(z)\subset\Omega$, 
\begin{equation}\label{eq:pos-msr}
\frac{\mu(B_r(z))}{r^{\frac{n}{2}-2}}\leq C\norm{u}_{L^2(B_{2r}(z))},
\end{equation}
where $C>0$ is a constant depending only on $n$ and $\lambda$. 
\end{lemma}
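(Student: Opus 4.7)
The plan is to verify the three claims (positivity, support, mass bound) in order, using the weak formulation of \eqref{eq:main} together with a standard Caccioppoli-type estimate.

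For positivity of $\mu$, the strategy is to test the equation $\ddiv(A(x,u)\nabla u)=0$ against $\psi_\e(u)\phi$, where $\phi\in C_c^\infty(\Omega)$ is nonnegative and $\psi_\e$ is the Lipschitz cutoff used already in the existence proof (zero on $\{t\le 0\}$, linear on $[0,\e]$, one above). By the chain rule,
\begin{equation*}
0=\int \phi\,\psi_\e'(u)\,A(x,u)|\nabla u|^2+\int \psi_\e(u)\,A(x,u)\nabla u\nabla\phi.
\end{equation*}
The first integral is nonnegative because $A\ge\lambda$, $\psi_\e'\ge 0$ and $\phi\ge 0$. Since $\psi_\e(u)$ is supported where $u>0$ (where $A(x,u)=a_+(x)$), the second integrand equals $\psi_\e(u)\,a_+\nabla u\nabla\phi$, and passing to the limit $\e\searrow 0$ via dominated convergence yields $\int a_+\nabla u^+\nabla\phi\le 0$. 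With the sign convention of the paper this gives $\mu\ge 0$ as a distribution, which is therefore a positive Radon measure by the Riesz representation theorem.

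For the support inclusion, the plan is to show $\mu$ vanishes on the three open sets $\Omega^+(u)$, $\Omega^-(u)$, and the interior of $\{u=0\}$. On $\Omega^-(u)\cup\operatorname{int}\{u=0\}$ the function $u^+$ is identically zero, so $L_+u^+=0$ there. On $\Omega^+(u)$ we have $u^+=u$, and the weak formulation reduces locally to $\ddiv(a_+\nabla u)=0$ (test functions supported in $\Omega^+(u)$ see only $a_+$), so again $L_+u^+=0$. Hence $\spt(\mu)$ is contained in $\Omega\setminus[\Omega^+(u)\cup\Omega^-(u)\cup\operatorname{int}\{u=0\}]$. Any point of this set lies in $\{u=0\}$ but has points of $\{u\ne 0\}$ in every neighborhood, hence belongs to $\p\{u>0\}\cup\p\{u<0\}$; by Lemma \ref{lemma:gamma} both sets equal $\Gamma(u)$, so $\spt(\mu)\subset\Gamma(u)$.

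For the quantitative bound \eqref{eq:pos-msr}, fix $B_{2r}(z)\subset\Omega$ and pick $\eta\in C_c^\infty(B_{3r/2}(z))$ with $0\le\eta\le 1$, $\eta\equiv 1$ on $B_r(z)$, $|\nabla\eta|\le C/r$. Since $\mu\ge 0$,
\begin{equation*}
\mu(B_r(z))\le\int\eta\,d\mu=\Bigl|\int a_+\nabla u^+\nabla\eta\Bigr|\le\tfrac{1}{\lambda}\,\|\nabla u^+\|_{L^2(B_{3r/2}(z))}\,\|\nabla\eta\|_{L^2}.
\end{equation*}
Here $\|\nabla\eta\|_{L^2}\le Cr^{n/2-1}$, and by the standard Caccioppoli inequality applied to the weak solution $u$ on $B_{2r}(z)$ we have $\|\nabla u\|_{L^2(B_{3r/2}(z))}\le Cr^{-1}\|u\|_{L^2(B_{2r}(z))}$; using $|\nabla u^+|\le|\nabla u|$ a.e.\ yields $\mu(B_r(z))\le Cr^{n/2-2}\|u\|_{L^2(B_{2r}(z))}$. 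Local finiteness follows immediately, since $\Omega$ is exhausted by such balls.

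The only mildly delicate step is the first one: justifying the chain rule for $\psi_\e(u)\phi\in W_0^{1,2}(\Omega)$ and the passage to the limit. Both are routine thanks to the Lipschitz character of $\psi_\e$ and the uniform ellipticity in \eqref{eq:unif ellip}, so the argument is essentially mechanical; the remaining steps reduce to Caccioppoli and the definition of support.
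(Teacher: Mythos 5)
Your proof is correct and follows essentially the same route as the paper: positivity via testing with $\psi_\e(u)\phi$ and using $\psi_\e'\ge 0$, the support inclusion by localizing test functions in the open sets where $u^+$ is a solution or vanishes, and the mass bound via a cutoff plus Caccioppoli. You merely fill in the energy-estimate details that the paper explicitly omits, so no further comparison is needed.
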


\begin{proof} If $\phi\in C_c^\infty(\Omega^+(u))$, then since $\nabla u^+ = \nabla u$ a.e. in $\Omega^+(u)$, we have 
\begin{equation*}
\int a_+\nabla u^+\nabla \phi = \int a_+\nabla u\nabla \phi = 0,
\end{equation*}
from which we deduce that $\spt(\mu) \subset\Gamma(u)$. 

Next we prove that $\mu$ is a positive measure. Suppose that $\phi\in C_c^\infty(\Omega)$ is nonnegative on $\Omega$. Let us choose $\psi_\e$ for each $\e>0$ in such a way that
\begin{equation*}
0\leq \psi_\e\leq 1,\quad \psi_\e'\geq 0,\quad \psi_\e(t) = 0\text{ for }t\leq \e/2\text{ and }\psi_\e(t) =1\text{ for }t\geq \e.
\end{equation*}
Then since $\psi_\e(u)\phi\in W_0^{1,2}(\Omega^+(u))$, we deduce from the weak formulation of \eqref{eq:main} that 
\begin{equation*}
0 = \int a_+\nabla u\nabla (\psi_\e(u)\phi) = \int a_+\psi_\e'(u)|\nabla u|^2\phi + \int a_+ \psi_\e(u)\nabla u\nabla \phi.
\end{equation*}
Owing to the fact that $\psi_\e'\geq 0$, we have that 
\begin{equation*}
\int a_+\psi_\e(u)\nabla u \nabla \phi \leq 0.
\end{equation*}
Following the argument in the proof of Proposition \ref{proposition:exist}, we derive that $\psi_\e(u)\nabla u \ra \nabla u^+$ weakly in $L^2(\Omega)$ as $\e\ra 0$, which in turn yields that
\begin{equation*}
\int a_+ \nabla u^+\nabla \phi\leq 0.
\end{equation*}
as desired. 

Along with the observation that $L_+u^+\geq 0$ in $\Omega$ in the sense of distribution, the inequality \eqref{eq:pos-msr} follows immediately from the interior energy estimate. We omit the details.
\end{proof}

Next lemma shows that there is no difference to define $\mu$ by $L_-u^-$. 

\begin{lemma}\label{lemma:same-msr} There holds 
\begin{equation*}
\int \phi d\mu = \int a_-\nabla u^-\nabla \phi,\quad\forall \phi\in C_c^\infty(\Omega).
\end{equation*}
\end{lemma}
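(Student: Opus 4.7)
The plan is to reduce the identity to the weak formulation of \eqref{eq:main} via the chain rule for Sobolev functions. By definition of $\mu$, the asserted equality is simply
\[
\int a_+ \nabla u^+ \nabla \phi \;=\; \int a_- \nabla u^- \nabla \phi, \qquad \forall \phi \in C_c^\infty(\Omega),
\]
so I only need to establish this identity.

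First I would rewrite $A(x,u)\nabla u$ in terms of $u^+$ and $u^-$. Using the representation
\[
A(x,u) = a_-(x)(1-H(u)) + a_+(x) H(u),
\]
together with the Sobolev identities $\nabla u = \nabla u^+ - \nabla u^-$, $\nabla u^+ = 0$ a.e.\ on $\{u \leq 0\}$, and $\nabla u^- = 0$ a.e.\ on $\{u \geq 0\}$, one checks that
\[
H(u)\nabla u = \nabla u^+, \qquad (1-H(u))\nabla u = -\nabla u^-
\]
almost everywhere in $\Omega$ (the value of $H$ at $0$ is immaterial since $\nabla u = 0$ a.e.\ on $\{u=0\}$). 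Consequently
\[
A(x,u)\nabla u = a_+(x)\nabla u^+ - a_-(x)\nabla u^- \quad \text{a.e.\ in } \Omega.
\]

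Next, given $\phi \in C_c^\infty(\Omega)$, I plug this pointwise identity into the weak formulation of \eqref{eq:main}:
\[
0 \;=\; \int A(x,u)\nabla u\,\nabla \phi \;=\; \int a_+\nabla u^+ \nabla\phi \;-\; \int a_-\nabla u^- \nabla\phi,
\]
which rearranges to the desired equality and, by the definition of $\mu$ in terms of $L_+ u^+$, proves that $\mu$ is equally represented by $L_- u^-$. There is no real obstacle here: the only delicate point is the justification of the chain-rule identities $H(u)\nabla u = \nabla u^+$ and $(1-H(u))\nabla u = -\nabla u^-$, which is a standard consequence of the fact that $\nabla u = 0$ a.e.\ on $\{u=0\}$ for $u \in W^{1,2}_{\mathrm{loc}}$.
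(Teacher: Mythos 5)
Your proof is correct and follows essentially the same route as the paper: both rewrite $A(x,u)\nabla u$ as $a_+\nabla u^+ - a_-\nabla u^-$ a.e.\ (using that $\nabla u=0$ a.e.\ on $\{u=0\}$) and then invoke the weak formulation of \eqref{eq:main}. Your version is if anything slightly more careful, since you spell out the chain-rule identities $H(u)\nabla u=\nabla u^+$ and $(1-H(u))\nabla u=-\nabla u^-$ that the paper uses implicitly.
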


\begin{proof} For any $\phi\in C_c^\infty(\Omega)$,
\begin{equation*}
\begin{split}
\int\phi d\mu^+-\int a_-\nabla u^-\nabla \phi &=-\int (a_+\nabla u^+-a_-\nabla u^-)\nabla\phi\\
&=-\int (a_+H(u)+a_-(1-H(u)))\nabla u\nabla \phi\\
&=-\int  A(x,u)\nabla u\nabla \phi\\
&=0.
\end{split}
\end{equation*}
\end{proof}

%
%

\section{Further Properties of the Free Boundary}\label{section:hausdorff}

In this section, we investigate some measure theoretic properties in regard of our free boundary, in preparation of the analysis in Section \ref{section:higher reg}. A key observation is that for $\mu$ almost every free boundary point, we can find a blowup limit which is a two plane solution, i.e., a function in form of $\alpha x_n^+ - \beta x_n^-$, after rotation. This idea was used and played a key role in \cite{AM}. Although the main arguments in this section  are similar to those in Section 4 of \cite{AM}, we will provide some  details, for the readers convenience. 

\begin{definition}\label{definition:scaled} Let $D$ be a domain in $\R^n$ and $v\in L^2(D)$. Define a function $v_{z,r}$ in $B_1$ by
\begin{equation*}
v_{z,r}(x):=
\begin{dcases}
\frac{v(rx+z)r^{\frac{n}{2}}}{\norm{v}_{L^2(B_r(z))}}&\text{if }\norm{v}_{L^2(B_r(z))}>0,\\
0&\text{otherwise},
\end{dcases}
\end{equation*}
provided that $B_r(z)\subset D$. 
\end{definition}

Let $u$ be a nontrivial weak solution to \eqref{eq:main} and $z\in\Gamma(u)$. By definition, $\norm{u_{z,r}}_{L^2(B_1)}\leq 1$ for any $0<r<\dist(z,\p\Omega)$. Therefore, there is a $v\in L^2(B_1)$ and a sequence $r_j\searrow 0$ such that $u_{z,r_j}\ra v$ weakly in $L^2(B_1)$ as $j\ra\infty$. In particular, $\norm{v}_{L^2(B_1)}\leq 1$. 

Note that $u_{z,r}$ is a weak solution to 
\begin{equation*}
\ddiv(A(rx+z,u_{z,r})\nabla u_{z,r})= 0\quad\text{in }B_1.
\end{equation*}
Utilizing the local energy estimate, we deduce that $u_{z,r_j}\ra v$ weakly in $W_{loc}^{1,2}(B_1)$ as $j\ra\infty$ up to a subsequence, and thus strongly in $L_{loc}^2(B_1)$. Likewise, by means of the interior H\"{o}lder estimate, we have $0<\gamma<1$ for which $u_{z,r_j}\ra v$ in $C_{loc}^{\gamma'}(B_1)$ as $j\ra\infty$ up to a subsequence, for any $0<\gamma'<\gamma$. As a result, $v\in W_{loc}^{1,2}(B_1)\cap C_{loc}^\gamma(B_1)\cap L^2(B_1)$.

As we follow the argument in the proof of Lemma \ref{lemma:Ca}, we observe that $v$ is a weak solution to 
\begin{equation*}
\ddiv(A_z(v)\nabla v) = 0\quad\text{in }B_1.
\end{equation*}
By defining a function $w$ in $B_1$ by 
\begin{equation*}
w(x) := a_+(z) v^+(x) - a_-(z) v^-(x),
\end{equation*}
$w$ becomes harmonic in $B_1$. It is noteworthy that the zero level surface of $v$ coincides with that of $w$, whence, inherits nice properties of nodal sets of harmonic functions. For instance, $v$ possesses the unique continuation property.\footnote{A function is said to have the unique continuation property, if it does not vanish on any open subset, unless it vanishes in the entire domain.} 

\begin{definition}\label{definition:blowup} Given $z\in\Gamma(u)$, define $Blo(u,z)$ by the class of all possible strong $L_{loc}^2(B_1)$ limits of $\{u_{z,r}\}_{r>0}$.
\end{definition}

The following lemma tells us that any free boundary point of a blowup limit admits a convergent sequence of the original free boundary points, unless the blowup limit is trivial.

\begin{lemma}\label{lemma:dist} Let $S\subset\Gamma(u)$ with $\mu(S)>0$. Then the following is true for $\mu$-a.e. $z\in S$: For any $v\in Blo(u,z)\setminus\{0\}$ and $x\in\p\{v>0\}\cap B_1$, 
\begin{equation}\label{eq:dist}
\varliminf_{r\ra 0}\frac{\dist(z+rx,S)}{r} = 0.
\end{equation}
\end{lemma}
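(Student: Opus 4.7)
The plan is to argue by contradiction, combining Lebesgue density of $\mu$ with the ``broken harmonic'' structure of blowup limits: for any $v \in Blo(u, z)$, the function $w := a_+(z) v^+ - a_-(z) v^-$ is harmonic in $B_1$ and shares its nodal set with $v$. If the conclusion failed, the set
\begin{equation*}
E := \left\{ z \in S : \exists\, v \in Blo(u, z) \setminus \{0\},\ x \in \p\{v > 0\} \cap B_1,\ \varliminf_{r \ra 0} \frac{\dist(z + rx, S)}{r} > 0 \right\}
\end{equation*}
would satisfy $\mu(E) > 0$. First, I would run a countable exhaustion---over rational $\delta > 0$, rational $r_0 > 0$, and a countable dense family of balls in $B_1$ pinning down the location of $x$---to extract a Borel subset $E_0 \subset E$ with $\mu(E_0) > 0$, constants $\delta, r_0 > 0$, and a measurable selection $z \mapsto (v_z, x_z)$ satisfying
\begin{equation*}
\dist(z + r x_z, S) \geq \delta r, \qquad \forall\, z \in E_0,\ 0 < r < r_0.
\end{equation*}
The Besicovitch differentiation theorem then supplies a $\mu$-density point $z_0 \in E_0$ at which $\mu(E_0 \cap B_r(z_0)) / \mu(B_r(z_0)) \ra 1$.

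Next, I would exploit the blowup. Choose $r_j \searrow 0$ along which $u_{z_0, r_j} \ra v_0 := v_{z_0}$ in $L_{loc}^2(B_1) \cap C_{loc}^{\gamma'}(B_1)$ (as explained preceding Definition \ref{definition:blowup}), so that $w_0 := a_+(z_0) v_0^+ - a_-(z_0) v_0^-$ is harmonic in $B_1$ with the same nodal set as $v_0$. Since the critical nodal set of a nontrivial harmonic function has Hausdorff dimension at most $n-2$, nondegenerate zeros of $w_0$ are dense in $\p\{v_0 > 0\}$; I can therefore pick $x_0' \in B_{\delta/3}(x_{z_0}) \cap \p\{v_0 > 0\}$ with $|\D w_0(x_0')| > 0$. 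Near such an $x_0'$, $v_0$ changes sign across a smooth hypersurface, so the measure $\mu_{v_0} := a_+(z_0) \Delta v_0^+$ carries strictly positive mass on every ball $B_\sigma(x_0')$, $\sigma > 0$.

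In the final step, I would transfer these facts back to $u$ and then to $S$. The weak convergence $\D (u_{z_0, r_j})^+ \ra \D v_0^+$ in $L_{loc}^2(B_1)$, combined with the uniform convergence $a_+(z_0 + r_j \cdot) \ra a_+(z_0)$ given by \eqref{eq:cont}, yields---after matching the $L^2$-scaling of $u_{z_0, r}$ with the intrinsic scaling of $\mu$---weak convergence of the appropriately rescaled push-forwards of $\mu$ to a positive multiple of $\mu_{v_0}$ on $B_1$. This produces a scale-invariant lower bound $\mu(B_{\sigma r_j}(z_0 + r_j x_0')) \gtrsim \mu(B_{r_j}(z_0))$ for every fixed small $\sigma$ and all large $j$. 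Density of $E_0$ at $z_0$ then forces $E_0 \cap B_{\sigma r_j}(z_0 + r_j x_0') \neq \emptyset$ for large $j$; any such point lies in $S$ and within $(\delta/3 + \sigma) r_j$ of $z_0 + r_j x_{z_0}$, and for $\sigma < \delta/3$ this contradicts $\dist(z_0 + r_j x_{z_0}, S) \geq \delta r_j$. The main obstacle is precisely the rescaling of $\mu$ in the previous sentence: Lemma \ref{lemma:pos-msr} supplies only an upper bound on $\mu(B_r(z))$, so matching the $L^2$-normalization of $u_{z, r}$ with the intrinsic scaling of $\mu$ requires care; following Section 4 of \cite{AM}, this should be handled by first restricting to a subset of $S$ on which $\mu$ satisfies a lower doubling property (which exhausts $\spt \mu$ up to a $\mu$-null set) before extracting the density point above.
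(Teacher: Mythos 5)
Your proposal has the same skeleton as the paper's proof: pass to a $\mu$-density point $z$ of $S$, blow up along $u_{z,r_j}\ra v$, identify the limit of the rescaled measures with $\mu_0=a_+(z)\Delta v^+$, show that $\mu_0$ charges every ball centered at a point of $\p\{v>0\}$, and contradict the density of $S$ at $z$. The one place where you diverge is precisely the step you flag as the main obstacle, and there your proposed fix is both unnecessary and, as written, a gap: you do not establish that $\spt\mu$ is exhausted up to a $\mu$-null set by subsets on which $\mu$ has a lower doubling bound, and the paper never needs such a statement. The point is that the unknown normalization cancels if you only ever work with \emph{ratios}: with $\mu_j$ denoting $\ddiv\bigl(a_+(r_jx+z)\nabla u_{z,r_j}^+\bigr)$, one has $\mu_j(E)=c_j\,\mu(z+r_jE)$ for a single constant $c_j=r_j^{2-n/2}/\norm{u}_{L^2(B_{r_j}(z))}$, hence
\begin{equation*}
\frac{\mu\bigl(B_{\e r_j}(z+r_jx)\bigr)}{\mu\bigl(B_{\rho r_j}(z)\bigr)}=\frac{\mu_j(B_\e(x))}{\mu_j(B_\rho)}\longrightarrow\frac{\mu_0(B_\e(x))}{\mu_0(B_\rho)}>0,
\end{equation*}
while the density of $S$ at $z$ together with $B_{\e r}(z+rx)\subset B_{\rho r}(z)\setminus S$ forces this ratio to tend to $0$. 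So no lower bound on $\mu(B_{r}(z))$ itself is ever required; your final step only uses the ratio, and you already have everything needed to compute it. Two further simplifications compared with your write-up: (i) the countable exhaustion and measurable selection of $(v_z,x_z)$ is not needed, because the paper first fixes a density point $z$ and then argues by contradiction for an \emph{arbitrary} pair $(v,x)$ violating \eqref{eq:dist}, so all quantifiers are handled pointwise; (ii) the positivity $\mu_0(B_\e(x))>0$ does not require locating a nondegenerate nodal point of the harmonic function $w$ --- if $\mu_0(B_\e(x))=0$ then $v^+$ is nonnegative and harmonic in $B_\e(x)$ with an interior zero, hence vanishes there, and by Lemma \ref{lemma:same-msr} the same holds for $v^-$, so unique continuation gives $v\equiv 0$, contradicting $v\neq 0$. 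Your nodal-set route is also correct, just heavier.
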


\begin{proof} Since $\mu$ is a Radon measure (Lemma \ref{lemma:pos-msr}) and $\mu(S)>0$, $\mu$-a.e point is a density point of $S$; i.e., for $\mu$-a.e. $z\in S$, we have 
\begin{equation}\label{eq:density}
\lim_{r\ra 0}\frac{\mu(S\cap B_r(z))}{\mu(B_r(z))} = 1.
\end{equation}
We prove that all density points of $S$ satisfies \eqref{eq:dist}. 

Suppose towards a contradiction that there is some $v\in Blo(u,z)$ and $x\in\p\{v>0\}\cap B_1$ such that \eqref{eq:dist} fails. Then we may take a small number $\e>0$ and $r_\e>0$ such that $\e+|x|<1$ and 
\begin{equation}\label{eq:dist-false}
\frac{\dist(z+ rx,S)}{r} \geq \e\quad\text{for any }0<r\leq r_\e.
\end{equation}
In particular, we have that $B_{\e r}(z+rx)\cap S=\emptyset$. 

Select $0<\rho<1$ such that $\e+|x|<\rho$. Then $B_{\e r}(z+rx)\subset B_{\rho r}(z)$. On the other hand, from \eqref{eq:density} one may deduce that 
\begin{equation*}
\lim_{r\ra 0}\frac{\mu(B_{\rho r}\setminus S)}{\mu(B_{\rho r}(z))} = 0.
\end{equation*}
Combining these two observations, we arrive at
\begin{equation}\label{eq:msr-density-false}
\lim_{r\ra 0}\frac{\mu(B_{\e r}(z+rx))}{\mu(B_{\rho r}(z))} = 0.
\end{equation}

Now let us take a sequence $r_j\searrow 0$ along which $u_{z,r_j}\ra v$ strongly in $L_{loc}^2(B_1)$. As observed earlier, we may without loss of generality say that $u_{z,r_j}\ra v$ weakly in $W_{loc}^{1,2}(B_1)$. As we denote by $\mu_j$ the measure $d\mu_j = \ddiv(a_+(r_jx+z)\nabla u_{z,r_j})$ and by $\mu_0$ the measure $d\mu_0 = a_+(z)\Delta v^+$, one may easily deduce from the weak $W_{loc}^{1,2}(B_1)$ convergence that $\mu_j \ra \mu_0$ locally in $B_1$ in the sense of Radon measure.\footnote{That is,  
\begin{equation*}
\int \phi d\mu_j \ra \int \phi d\mu_0\quad\text{for any }\phi\in C_c^\infty(B_1).
\end{equation*}
}

As both $B_\e(x)$ and $B_\rho$ being compactly contained in $B_1$, it follows from the above convergence that $\mu_v(B_\e(x)) = \lim_{j\ra\infty}\mu_{u_{z,r_j}}(B_\e(x))<\infty$ and $\mu_v(B_\rho)= \lim_{j\ra\infty}\mu_{u_{z,r_j}}(B_\rho)<\infty$. Moreover, we have $\mu_v(B_\e(x))>0$. Otherwise, we have $\mu_v(B_\e(x))=0$, which implies that $v^+$ is a nonnegative harmonic function in $B_\e(x)$, and thus, by the maximum principle, $v^+\equiv 0$ in $B_\e(x)$. Due to Lemma \ref{lemma:same-msr}, however, one may also deduce from $\mu_v(B_\e(x))=0$ that $v^-$ is a nonnegative harmonic function in $B_\e(x)$, whence showing that $v^-\equiv 0$ in $B_\e(x)$. It follows that $v\equiv 0$ in $B_\e(x)$, and thus by the unique continuation property of $v$, we arrive at $v\equiv 0$ in $B_1$, a contradiction to our initial choice of $v$. Similarly, we may prove $\mu_v(B_\rho)>0$. However, \eqref{eq:msr-density-false} implies that 
\begin{equation*}
\frac{\mu_v(B_\e(x))}{\mu_v(B_\rho)} = \lim_{j\ra\infty}\frac{\mu_{u_{z,r_j}}(B_\e(x))}{\mu_{u_{z,r_j}}(B_{\rho})} =\lim_{j\ra\infty}\frac{\mu(B_{\e r_j}(z+r_jx))}{\mu(B_{\rho r_j}(z))} = 0,
\end{equation*} 
a contradiction. Thus, the lemma is proved. 
\end{proof} 

With the previous lemma at hand, we observe that blowup at a free boundary point of a blowup limit indeed belongs to a limit of the original blowup sequence. This is certainly true for harmonic functions. Moreover, it becomes intuitively clear that only nondegenerate points enjoy this property, unless degenerate points carry some positive measure. 

\begin{lemma}\label{lemma:blowup-twice} Let $0<\rho<1$. The following is true for $\mu$-a.e. $z\in\Gamma(u)$: For any $v\in Blo(u,z)$ and $x\in\p\{v>0\}\cap B_{1-\rho}$, there holds $v_{x,\tau\rho}\in Blo(u,z)$ for any $0<\tau\leq 1$.
\end{lemma}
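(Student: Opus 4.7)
The approach is to realize $v_{x,\tau\rho}$ as an $L_{loc}^2(B_1)$-limit of rescalings of $u$ by combining the strong $L^2$-convergence defining $v$ with the density information on $\Gamma(u)$ furnished by Lemma~\ref{lemma:dist}. In outline I will pass the strong convergence through a second rescaling at $x$, identify the result as a blowup of $u$ at shifted centers $z_j \to z$, and then correct the centers back onto $\Gamma(u)$ near $z$ using Lemma~\ref{lemma:dist}.

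Fix $r_j \searrow 0$ with $u_{z,r_j} \to v$ strongly in $L_{loc}^2(B_1)$. Since $x \in \partial\{v>0\} \cap B_{1-\rho}$ and $v$ is continuous, $v^+$ is nontrivial in every neighborhood of $x$, so $\|v\|_{L^2(B_{\tau\rho}(x))} > 0$ and the rescaling $v_{x,\tau\rho}$ is well defined. The strong $L^2$-convergence passes through this second rescaling to give $(u_{z,r_j})_{x,\tau\rho} \to v_{x,\tau\rho}$ in $L^2(B_1)$. A direct change of variables produces the identity $(u_{z,r_j})_{x,\tau\rho} = u_{z+r_j x,\, r_j \tau\rho}$, so $v_{x,\tau\rho}$ is realized as the $L^2(B_1)$-limit of blowups of $u$ at the shifted centers $z_j := z + r_j x$ with scales $s_j := r_j \tau\rho$; note both $z_j \to z$ and $s_j \to 0$.

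It remains to convert these into legitimate rescalings of the type entering the definition of $Blo(u,z)$. Apply Lemma~\ref{lemma:dist} at the good point $z$ (valid for $\mu$-a.e.\ $z$) and, via a diagonal extraction matching its sequence with $\{r_j\}$, produce free boundary points $\zeta_{j_k} \in \Gamma(u)$ at distance $o(s_{j_k})$ from $z_{j_k}$. The Lipschitz regularity of $u$ (Theorem~\ref{theorem:opt reg}) together with the nondegeneracy of $\|u\|_{L^2(B_{s_{j_k}}(z_{j_k}))}$, inherited from the nontriviality of the limit $v_{x,\tau\rho}$, ensure that replacing the center by $\zeta_{j_k}$ perturbs the normalized blowup by an $L^2(B_1)$-error vanishing in the limit. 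Hence $u_{\zeta_{j_k}, s_{j_k}} \to v_{x,\tau\rho}$, with $\zeta_{j_k} \in \Gamma(u)$ and $\zeta_{j_k} \to z$, identifying $v_{x,\tau\rho}$ as a blowup of $u$ at $z$ in the sense used throughout Section~\ref{section:hausdorff}.

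The main obstacle is precisely this center-correction step. The shift $z_j - z = r_j x$ is of the order of the new scale $s_j = r_j \tau \rho$, not smaller, so a naive translation of the center is not asymptotically trivial in the rescaled coordinates. Overcoming this requires both the measure-theoretic density of $\Gamma(u)$ near $z$ (Lemma~\ref{lemma:dist}) to absorb the residual shift into the $o(s_j)$ regime after a diagonal extraction, and the Lipschitz control from Theorem~\ref{theorem:opt reg} to ensure that a translation on that scale leaves the normalization undisturbed. This is the step that genuinely uses the restriction to $\mu$-a.e.\ $z$.
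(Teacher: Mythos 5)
Your reduction steps are fine: the identity $(u_{z,r_j})_{x,\tau\rho}=u_{z+r_jx,\,r_j\tau\rho}$ is correct, the normalizations converge because $\|v\|_{L^2(B_{\tau\rho}(x))}>0$ and $B_{\tau\rho}(x)\Subset B_1$, and so $v_{x,\tau\rho}$ is indeed an $L^2(B_1)$-limit of the rescalings $u_{z_j,s_j}$ with $z_j=z+r_jx$ and $s_j=r_j\tau\rho$. The genuine gap is in the final step, and it is not repaired by the center-correction you propose. By Definition \ref{definition:blowup}, $Blo(u,z)$ consists of limits of $u_{z,r}$ with the center fixed at $z$; but $|z_j-z|=r_j|x|$ is a \emph{fixed positive multiple} of the scale $s_j=r_j\tau\rho$ (namely $|x|/(\tau\rho)$), so the offset never becomes negligible in the rescaled coordinates. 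Lemma \ref{lemma:dist} and the Lipschitz bound only let you replace $z_j$ by a free boundary point $\zeta_j$ with $|\zeta_j-z_j|=o(s_j)$; the corrected center $\zeta_j$ is still at distance comparable to $s_j$ from $z$. What you end up proving is that $v_{x,\tau\rho}$ is a blowup of $u$ along \emph{moving} centers $\zeta_j\to z$, which is not membership in $Blo(u,z)$ — and relating blowups at shifted centers to blowups at $z$ is essentially the content of the lemma itself, so the argument is circular at exactly the hard point. (A secondary issue: Lemma \ref{lemma:dist} only gives a $\varliminf$, i.e.\ \emph{some} sequence of radii along which $\dist(z+rx,S)=o(r)$; there is no reason it can be "diagonally matched" with the particular sequence $r_j$ realizing $v$, and along the radii it does provide, $u_{z,r}$ need not converge to $v$.)

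The paper's proof is structured precisely to avoid landing the conclusion back at $z$. It argues by contradiction: if the statement fails on a set of positive $\mu$-measure, one collects the bad points into sets $S_{j,k,l}$ on which the failure is quantitative ($\|u_{z,r}-v^z_{x^z,\rho}\|_{L^2(B_{1/(2j)})}>1/k$ for all small $r$), uses compactness of $\{v^z_{x^z,\rho}\}$ in $L^2$ to pigeonhole a positive-measure subset $S$ whose target profiles are mutually within $1/(2k_0)$, picks a density point $z\in S$, and uses Lemma \ref{lemma:dist} to produce $z^i\in S$ with $(z^i-z)/r_i\to x^z$. The convergence $u_{z^i,\rho r_i}\to v^z_{x^z,\rho}$ (your steps 1--3, recentered) then contradicts the badness of the \emph{shifted} points $z^i$ — not of $z$ — because their own targets $v^{z^i}_{x^{z^i},\rho}$ are close to $v^z_{x^z,\rho}$ by the pigeonhole step. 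This is where the restriction to $\mu$-a.e.\ $z$ genuinely enters, and it cannot be replaced by a pointwise recentering argument of the kind you outline.
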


\begin{proof} Fix $0<\rho<1$. Note that the statement is clearly true if $x=0$, since if $u_{z,r_i}\ra v$ in $L_{loc}^2(B_1)$ as $i\ra\infty$, 
\begin{equation*}
u_{z,\rho r_i} = (u_{z,r_i})_{0,\rho} \ra v_{0,\rho}\quad\text{in }L^2(B_1)\text{ as $i\ra\infty$}.
\end{equation*}
Therefore, we only need to consider $x\in\p\{v>0\}\cap(B_\rho\setminus\{0\})$. 

In addition, one may argue in a similar manner as above and deduce that if $v_{x,\rho}\in Blo(u,z)$, then $v_{x,\tau\rho}\in Blo(u,z)$ for any $0<\tau\leq 1$. Thus, it suffices to prove the statement of Lemma \ref{lemma:blowup-twice} for $\tau=1$. 

For any triple $(j,k,l)$ of positive integer, define $S_{j,k,l}$ by the subset of $\Gamma(u)$ such that for each $z\in S_{j,k,l}$, $\dist(z,\p\Omega)\geq\frac{1}{l}$ and there correspond $v^z\in Blo(u,z)$ and $x^z\in\p\{v>0\}\cap B_\rho$ such that 
\begin{equation}\label{eq:Skl}
\norm{u_{z,r} - v_{x^z,\rho}^z}_{L^2(B_{1/(2j)})}>\frac{1}{k},\quad\forall r\leq\frac{1}{2l}.
\end{equation}
Note that for any $z\in S_{j,k,l}$, the corresponding $v^z$ is not identically zero in $B_1$ because any scaled version of a trivial function was defined to be trivial. 

Assume to the contrary that $\mu(S_{j,k,l})>0$ for some positive integers $j_0$, $k_0$ and $l_0$, and let us write $S_0 = S_{j_0,k_0,l_0}$.

Set $\cA:=\{v_{x^z,\rho}^z:z\in S_0\}$. Note that $\cA$ is uniformly bounded in $W^{1,2}(B_{1/(2j_0)})$. As $W^{1,2}(B_{1/(2j_0)})$ being compactly embedded in $L^2(B_{1/(2j_0)})$, $\cA$ can be covered by a countable collection $\{G_m\}_{m=1}^\infty$ of balls in $L^2(B_{1/(2j_0)})$ with $\diam(G_m)\leq\frac{1}{2k_0}$; i.e., $\norm{w-\phi}_{L^2(B_{1/(2j_0)})}\leq\frac{1}{2k_0}$ whenever $w,\phi\in G_m$. 

As $\cA$ being covered by $\{G_m\}_{m=1}^\infty$, we may decompose $S_0$ into a countable union of $S_0^m$, $m=1,2,\cdots$, where $S_0^m:=\{z\in S_0:v_{x^z,\rho}^z\in G_m\}$. From the assumption that $\mu(S_0)>0$, there exists some $m_0$ such that $\mu(S_0^{m_0})>0$. 

For the notational convenience, let us abbreviate by $S$ the set $S_0^{m_0}$. Owing to Lemma \ref{lemma:dist}, we may choose $z\in S$ such that \eqref{eq:dist} is true. Since $|x^z|<\rho$, we are able to take $0<\sigma<1$ such that $|x^z|+(1-\rho)<\sigma$. Now we denote by $\{r_i\}_{i=1}^\infty$ the sequence along which $u_{z,r_i}\ra v_{x^z,\rho}^z$ strongly in $L_{loc}^2(B_1)$, and thus, in $L^2(B_\sigma)$, as $i\ra\infty$. By means of \eqref{eq:dist}, one may assign a point $z^i\in S$ for each $i=1,2,\cdots$ such that 
\begin{equation*}
0=\lim_{i\ra\infty}\frac{|z^i - (z + r_ix^z)|}{r_i} = \lim_{i\ra\infty}\left|\frac{z^i - z}{r_i} - x^z\right|.
\end{equation*}

Take a sufficiently large $i_0$ such that $|\frac{z^i-z}{r_i}|\leq\sigma-(1-\rho)$ for all $i\geq i_0$. Then $\frac{z^i-z}{r_i}+\rho y\in B_\sigma$ for any $y\in B_1$, which implies that 
\begin{equation*}
u_{z^i,\rho r_i} = (u_{z,r_i})_{\frac{z^i-z}{r_i},\rho}\ra v_{x^z,\rho}^z\quad\text{strongly in }L^2(B_1).
\end{equation*}
Hence, by making $i_0$ even larger if necessary, we have  
\begin{equation}\label{eq:uzj-conv}
\norm{u_{z^i,\rho r_i} - v_{x^z,\rho}^z}_{L^2(B_1)}\leq \frac{1}{2k_0},\quad\forall i\geq i_0,
\end{equation}
where $k_0$ is the integer chosen in the beginning of this proof. 

However, we observe that
\begin{equation}\label{eq:gm0}
\norm{v_{x^{z^i},\rho}^{z^i}- v_{x^z,\rho}^z}_{L^2(B_{1/(2j_0)})}\leq\frac{1}{2k_0},\quad\forall i=1,2,\cdots,
\end{equation}
since all $z^i$'s and $z$ belong to $S=S_0^{m_0}$; recall that $\diam(G_{m_0})\leq\frac{1}{2k_0}$. Combining \eqref{eq:gm0} with \eqref{eq:uzj-conv}, we arrive at
\begin{equation*}
\norm{u_{z^i,\rho r_i}-v_{x^{z^i},\rho}^{z^i}}_{L^2(B_{1/(2j_0)})}\leq\frac{1}{k_0},\quad\forall i\geq i_0.
\end{equation*}
This contradicts our assumption in \eqref{eq:Skl} as we further enlarge $i_0$ so that $r_i\leq\frac{1}{2l_0}$ for all $i\geq i_0$. This completes the proof.
\end{proof} 

Next we prove that degenerate points with vanishing order greater than or equal to 2 have $\mu$-measure zero. 

\begin{lemma}\label{lemma:D} Let $\cD(u)$ be a subset of $\Gamma(u)$ such that for each $z\in\cD(u)$,  
\begin{equation}\label{eq:D}
\varlimsup_{r\ra 0}\frac{\norm{u}_{L^2(B_r(z))}}{r^{\frac{n}{2}+2}}<\infty.
\end{equation}
Then $\mu(\cD(u))=0$. 
\end{lemma}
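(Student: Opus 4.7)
The strategy is to first use Lemma \ref{lemma:pos-msr} to show that $\mu$ has bounded $n$-dimensional density along $\cD(u)$, hence $\mu|_{\cD(u)}\ll\La^n$, and then to argue via a blow-up at a Lebesgue density-one point that the Radon--Nikodym derivative must in fact vanish.

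First, write $\cD(u)=\bigcup_{k=1}^\infty \cD_k(u)$, where
$$\cD_k(u):=\left\{z\in\Gamma(u):\norm{u}_{L^2(B_r(z))}\leq kr^{\frac{n}{2}+2}\text{ for all }0<r\leq 1/k\right\},$$
each of which is Borel; it then suffices to show $\mu(\cD_k(u))=0$ for each $k$. Combining the defining bound with Lemma \ref{lemma:pos-msr}, for $z\in \cD_k(u)$ and $r\leq 1/(2k)$,
$$\mu(B_r(z))\leq Cr^{\frac{n}{2}-2}\norm{u}_{L^2(B_{2r}(z))}\leq C(k)\,r^n.$$
A standard Vitali covering argument together with outer regularity then yields $\mu(E)\leq C(k)\La^n(E)$ for every Borel $E\subset\cD_k(u)$, so $\mu|_{\cD_k(u)}\ll\La^n$ with bounded Radon--Nikodym density $f$.

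Assume for contradiction that $\mu(\cD_k(u))>0$. Then $\La^n(\cD_k(u))>0$, and I choose a Lebesgue density-one point $z_0\in\cD_k(u)$. Perform the blow-up $v_r(x):=u(rx+z_0)/r^2$: these are uniformly bounded in $L^2(B_1)$ and satisfy the rescaled equation $\ddiv(A(rx+z_0,v_r)\nabla v_r)=0$. Along a subsequence $r_j\to 0$, $v_{r_j}\to v$ strongly in $L^2_{loc}(B_1)$, weakly in $W^{1,2}_{loc}(B_1)$, and locally uniformly (via De Giorgi), with $v$ solving $\ddiv(A_{z_0}(v)\nabla v)=0$. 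The Lebesgue density-one condition gives $\La^n(((\cD_k(u)-z_0)/r_j)\cap B_1)\to\omega_n$; after passing to a further subsequence along which the complements are summable, a Borel--Cantelli argument shows that $\La^n$-a.e.\ $y\in B_1$ satisfies $z_0+r_j y\in\cD_k(u)$ for all but finitely many $j$. For such $y$ and $s>0$ small,
$$\norm{v_{r_j}}_{L^2(B_s(y))}=r_j^{-\frac{n}{2}-2}\norm{u}_{L^2(B_{r_js}(z_0+r_jy))}\leq k\,s^{\frac{n}{2}+2},$$
which passes to the limit to give $\norm{v}_{L^2(B_s(y))}\leq k\,s^{\frac{n}{2}+2}$ for all small $s$. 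Continuity of $v$ now forces $v(y)=0$, since otherwise a pointwise lower bound $\norm{v}_{L^2(B_s(y))}\geq c|v(y)|s^{n/2}$ (valid on a small ball around $y$) would clash with the $s^{\frac{n}{2}+2}$ decay; thus $v\equiv 0$ on a $\La^n$-full subset of $B_1$, and $v\equiv 0$ throughout $B_1$ by continuity.

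With $v\equiv 0$, the measure $\mu_v:=a_+(z_0)\Delta v^+$ vanishes. The rescaled measures $\tilde\mu_{r_j}$, defined by $\tilde\mu_{r_j}(E):=\mu(z_0+r_jE)/r_j^n$, converge weakly to $\mu_v=0$, whence $\tilde\mu_{r_j}(\overline{B_\rho})\to 0$ for every $\rho<1$; by the sub-subsequence principle this upgrades to $\mu(B_r(z_0))/r^n\to 0$ as $r\to 0$. Lebesgue differentiation yields $f(z_0)=0$; since $z_0$ was an arbitrary Lebesgue density-one point of $\cD_k(u)$, $f\equiv 0$ $\La^n$-a.e.\ on $\cD_k(u)$, so $\mu(\cD_k(u))=\int_{\cD_k(u)}f\,d\La^n=0$, a contradiction. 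The main obstacle will be orchestrating the blow-up step, where the Lebesgue density of $\cD_k(u)$ at $z_0$ must be transferred to a pointwise quadratic decay of $v$ at $\La^n$-a.e.\ point, which combined with H\"older continuity forces $v\equiv 0$.
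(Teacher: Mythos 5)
Your proof is correct, but it follows a genuinely different route from the paper's. Both arguments share the first step: the bound $\mu(B_r(z))\leq C\,r^{\frac n2-2}\norm{u}_{L^2(B_{2r}(z))}$ from Lemma \ref{lemma:pos-msr} turns the hypothesis \eqref{eq:D} into $\mu(B_r(z))\leq C(k)r^n$ on each piece $\cD_k(u)$, whence $\mu\msr\cD_k(u)\ll\cL^n$. From there the paper splits $\cD(u)$ further into the set $\cD'(u)$ of points with vanishing order $\geq 3$, which is killed by an $\cH^{n+1}$-absolute-continuity (dimension-count) argument, and its complement, where the \emph{failure} of the order-$3$ bound supplies a doubling sequence of radii, hence a nontrivial normalized blowup; local H\"older continuity then produces an interior ball of definite sign, contradicting the Lebesgue density of $\Gamma(u)$ at the chosen point. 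You instead work with the fixed quadratic rescaling $v_r=u(r\cdot+z_0)/r^2$ at a Lebesgue density point $z_0$ of $\cD_k(u)$ and show the blowup limit is \emph{identically zero}: the density-one condition plus Borel--Cantelli propagates the decay $\norm{v}_{L^2(B_s(y))}\leq ks^{\frac n2+2}$ to $\cL^n$-a.e.\ $y\in B_1$, and continuity forces $v\equiv 0$; the vanishing of the rescaled measures then gives $\mu(B_r(z_0))/r^n\to 0$ and hence $\mu(\cD_k(u))=0$ by differentiation. Your approach is more direct and dispenses with the auxiliary set $\cD'(u)$ and the Hausdorff-measure bookkeeping, at the cost of the Borel--Cantelli propagation step and of justifying the convergence $\tilde\mu_{r_j}\to 0$; note that the latter is actually immediate here, since $v\equiv 0$ together with the Caccioppoli inequality gives $\nabla v_{r_j}\to 0$ \emph{strongly} in $L^2_{loc}(B_1)$, so you do not even need the weak convergence $\nabla v_{r_j}^{\pm}\rightharpoonup\nabla v^{\pm}$ or the identification of the limiting equation. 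The paper's route, by contrast, only needs a qualitative nondegeneracy of the blowup rather than its full triviality. Both arguments are sound; yours also yields the slightly stronger pointwise conclusion that the $n$-dimensional density of $\mu$ vanishes at $\cL^n$-a.e.\ point of $\cD(u)$.
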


\begin{proof} Owing to \eqref{eq:pos-msr}, we have, for any $z\in\cD(u)$, 
\begin{equation}\label{eq:Dk-re}
\varlimsup_{r\ra 0}\frac{\mu(B_r(z))}{r^n}<\infty.
\end{equation}
Hence, the measure $\mu$ restricted on $\cD(u)$ is absolutely continuous with respect to $\cL^n$, the $n$-dimensional Lebesgue measure.

Let us consider a subset $\cD'(u)$ of $\cD(u)$ consisting of all $z$ at which 
\begin{equation*}
\varlimsup_{r\ra 0}\frac{\norm{u}_{L^2(B_r(z))}}{r^{\frac{n}{2}+3}}<\infty.
\end{equation*} 
Following the argument right above, one may also observe that the measure $\mu$ restricted on $\cD'(u)$ is absolutely continuous with respect to $\cH^{n+1}$, the $(n+1)$-dimensional Hausdorff measure. Evidently, $\cH^{n+1}(\cD'(u))=0$, from which it follows that $\mu(\cD'(u))=0$. 

Thus, it will suffice to prove $\cL^n(\cD(u)\setminus\cD'(u))=0$ to ensure that $\mu(\cD(u))=0$. Suppose towards a contradiction that $\cL^n(\cD(u)\setminus\cD'(u))>0$. Then there exists a density point $z\in\cD(u)\cap\cD'(u)$ with respect to $\cL^n$. As $\cD(u)\cap\cD'(u)\subset\Gamma(u)$, it follows that 
\begin{equation}\label{eq:dense-D}
\lim_{r\ra 0}\frac{\cL^n(\Gamma(u)\cap B_r(z))}{\cL^n(B_r(z))} = 1.
\end{equation}
However, as $z$ being chosen from $\cD(u)\setminus\cD'(u)$, we have 
\begin{equation*}
\varlimsup_{r\ra 0}\frac{\norm{u}_{L^2(B_r(z))}}{r^{\frac{n}{2}+3}}=\infty,
\end{equation*}
by which we may take a sequence $r_j\searrow 0$ such that  
\begin{equation}\label{eq:doubling-D}
\norm{u}_{L^2(B_{r_j}(z))}\leq 2^3\norm{u}_{L^2(B_{r_j/2}(z))}.
\end{equation}

Taking a further subsequence of $\{r_j\}_{j=1}^\infty$ if necessary, we obtain a strong $L_{loc}^2(B_1)$ limit $v$ of $\{u_{z,r_j}\}_{j=1}^\infty$; see the discussion before Lemma \ref{lemma:dist}. With \eqref{eq:doubling-D} at hand, $v$ satisfies 
\begin{equation*}\label{eq:D-v}
\norm{v}_{L^2(B_{1/2})}\geq 2^{-3}.
\end{equation*}
Without loss of generality, we can take $x^0\in\bar{B}_{1/2}$ such that $v(x^0)\geq c_n$. Then the local H\"{o}lder regularity of $v$ allows us to choose a small constant $0<\delta<\frac{1}{4}$ such that $\sup_{B_\delta(x^0)}v\geq \frac{c_n}{2}$. Utilizing the locally uniform convergence, we may take a sufficiently large $j_\delta$ such that $\sup_{B_\delta(x^0)} u_{z,r_j}\geq \frac{c_n}{3}$ for all $j\geq j_\delta$, which suffices to prove that $B_{\delta}(x^0)\subset \Omega^+(u_{z,r_j})\cap B_1$ for all $j\geq j_\delta$. Alternatively, we have $B_{\delta r_j}(r_jx^0+z)\subset\Omega^+(u)\cap B_{r_j}(z)$ for any $j\geq j_\delta$, whence
\begin{equation*}
\varliminf_{j\ra\infty}\frac{\cL^n(B_{r_j}(z)\setminus\Gamma(u))}{\cL^n(B_{r_j}(z))}\geq\varliminf_{j\ra\infty}\frac{\cL^n(B_{\delta r_j}(r_jx^0+z))}{\cL^n(B_{r_j}(z))} = \delta^n>0,
\end{equation*}
which violates \eqref{eq:dense-D}. Hence, the proof is finished.
\end{proof}

We finish this section by proving that for $\mu$ almost every point of the free boundary has a two plane solution as its blowup limit. Due to the preceding lemma, we only need to consider those points which have vanishing order less than 2. This will give us a doubling condition, as observed in the proof of Lemma \ref{lemma:D}, so that we may find a nontrivial blowup limit.

\begin{lemma}\label{lemma:flat-ae} For $\mu$-a.e. $z\in\Gamma(u)$, there exists a function $v\in Blo(u,z)$ such that 
\begin{equation*}
v(x) = \beta_+(x\cdot\nu)^+ - \beta_-(x\cdot\nu)^-\quad\text{in }B_1,
\end{equation*}
where $\beta_\pm = \beta/a_\pm(z)$ for some $\beta>0$ and $\nu$ is a unit vector in $\R^n$.
\end{lemma}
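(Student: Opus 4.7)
The plan is to combine the doubling implicit in Lemma \ref{lemma:D}, the iterated-blowup stability of Lemma \ref{lemma:blowup-twice}, and the classical regularity of nodal sets of harmonic functions. Throughout, I work at a point $z\in\Gamma(u)$ chosen from the full-$\mu$-measure set on which $z\notin\cD(u)$ and Lemma \ref{lemma:blowup-twice} applies.

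First, I extract a nontrivial blowup. Since $z\notin\cD(u)$, the quantity $\|u\|_{L^2(B_r(z))}/r^{n/2+2}$ is unbounded as $r\to 0$, and the selection argument used in the proof of Lemma \ref{lemma:D} produces a sequence $r_j\searrow 0$ with the doubling bound $\|u\|_{L^2(B_{r_j}(z))}\leq 2^3\|u\|_{L^2(B_{r_j/2}(z))}$. The compactness discussion preceding Definition \ref{definition:blowup} then yields, along a subsequence, a strong $L_{loc}^2(B_1)$ limit $v\in Blo(u,z)$, and the doubling transfers to $\|v\|_{L^2(B_{1/2})}\geq 2^{-3}>0$; in particular $v$ is nontrivial with $v(0)=0$.

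Next, I exploit the broken-harmonic structure of $v$. The function $w:=a_+(z)v^+-a_-(z)v^-$ is harmonic in $B_1$ and nontrivial (since $\{v=0\}=\{w=0\}$ and $v\not\equiv 0$ force $w\not\equiv 0$ via unique continuation), with $w(0)=0$. Classical nodal-set theory then furnishes a real-analytic hypersurface $\cR:=\{w=0,\ \nabla w\neq 0\}$, relatively open and $\cH^{n-1}$-dense in $\{w=0\}$, and hence intersecting every neighborhood of $0$. Fix a small $\rho\in(0,1)$ and pick $x_0\in\cR\cap B_{1-\rho}$; since $w$ changes sign across $\cR$, automatically $x_0\in\p\{v>0\}\cap B_{1-\rho}$. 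By Lemma \ref{lemma:blowup-twice}, $v_{x_0,\tau\rho}\in Blo(u,z)$ for every $\tau\in(0,1]$. Writing $\alpha:=|\nabla w(x_0)|>0$ and $\nu:=\nabla w(x_0)/\alpha$, the pointwise identity $v=w/a_+(z)$ on $\{w>0\}$, $v=w/a_-(z)$ on $\{w<0\}$, together with the Taylor expansion $w(ry+x_0)=r\alpha\,(y\cdot\nu)+o(r)$, yields
\begin{equation*}
v(ry+x_0)=r\left[\frac{\alpha}{a_+(z)}(y\cdot\nu)^+-\frac{\alpha}{a_-(z)}(y\cdot\nu)^-\right]+o(r)
\end{equation*}
uniformly on compact subsets of $\R^n$. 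Dividing by the $L^2$-normalization and letting $\tau\to 0$ then gives $v_{x_0,\tau\rho}\to V$ in $L_{loc}^2(B_1)$, where
\begin{equation*}
V(y)=\frac{\beta}{a_+(z)}(y\cdot\nu)^+-\frac{\beta}{a_-(z)}(y\cdot\nu)^-
\end{equation*}
and $\beta>0$ is fixed by the inherited normalization $\|V\|_{L^2(B_1)}=1$.

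Finally, a standard diagonal argument places $V$ in $Blo(u,z)$: for each $k\geq 1$ pick $r_j^{(k)}\searrow 0$ with $u_{z,r_j^{(k)}}\to v_{x_0,\rho/k}$ in $L_{loc}^2(B_1)$, then extract a diagonal sequence $\tilde r_k=r_{j_k}^{(k)}$ with $u_{z,\tilde r_k}\to V$ in the same topology. The most delicate step is the second one: one must ensure that $\cR$ intersects every neighborhood of the origin and that its points lie in $\p\{v>0\}$, which both follow from the fact that a nontrivial harmonic function vanishing at the origin has a nodal set whose regular part is locally $\cH^{n-1}$-dense and separates $\{w>0\}$ from $\{w<0\}$. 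Aside from the $z$-dependence of the slopes $\beta_\pm=\beta/a_\pm(z)$, the overall strategy parallels Section~4 of \cite{AM}.
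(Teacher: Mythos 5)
Your argument is correct and takes essentially the same route as the paper's proof: reduce to $z\notin\cD(u)$ via Lemma \ref{lemma:D}, extract a nontrivial blowup $v$ using the doubling bound from that lemma's proof, pass to the harmonic function $w=a_+(z)v^+-a_-(z)v^-$, blow up at a regular point of its nodal set to obtain the two-plane profile, and invoke Lemma \ref{lemma:blowup-twice}. Your explicit diagonal argument at the end merely spells out a closure step that the paper leaves implicit.
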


\begin{proof} With Lemma \ref{lemma:D} at hand, it suffices to prove this lemma for $z\in\Gamma(u)\setminus\cD(u)$.

Since $z\notin\cD(u)$, one may follow the argument in the proof of Lemma \ref{lemma:D} and obtain a nontrivial $v\in Blo(u,z)$. As discussed in the beginning of this section, the function $w$, defined by $w(x):=a_+(z)v^+(x) - a_-(z)v^-(x)$, is harmonic in $B_1$. Hence, we may find $x^0\in\p\{w>0\}\cap B_1$ where $|\nabla w(x^0)|>0$. Taking $\nu:=\frac{\nabla w(x^0)}{|\nabla w(x^0)|}$ and $\beta = |\nabla w(x^0)|$,  one may easily deduce that as $\rho\searrow 0$, 
\begin{equation*}
w_{x^0,\rho}\ra \beta(x\cdot\nu)\quad\text{strongly in $L_{loc}^2(B_1)$},
\end{equation*} 
or alternatively,
\begin{equation*}
v_{x^0,\rho}\ra \beta_+(x\cdot\nu)^+ - \beta_-(x\cdot\nu)^-\quad\text{strongly in $L_{loc}^2(B_1)$}.
\end{equation*}
The proof  now follows by  Lemma \ref{lemma:blowup-twice}.
\end{proof}

%
%

\section{$C^{1,\alpha}$ Regularity of Nondegenerate Free Boundaries}\label{section:higher reg}

This section is devoted to the investigation of higher regularity of our free boundaries. According to Lemma \ref{lemma:flat-ae}, it is natural to start with those points which have two plane solutions as blowup limits. We may prove later that our free boundary satisfies a flatness condition (e.g., \eqref{eq:assump-flat}) around such points, and it turns out that such a flatness condition can be improved in an inductive way (Lemma \ref{lemma:approximation}). After all, we observe that those points admit a small neighborhood in which the free boundary is a $C^{1,\alpha}$ graph.

\begin{theorem}\label{theorem:C1a-fb} Let $a_+$ and $a_-$ satisfy \eqref{eq:cont} with an $\alpha$-H\"{o}lder continuous $\omega$, and $u$ be a weak solution to \eqref{eq:main} in $\Omega$. Then for $\mu$-a.e. $z\in\Gamma(u)$, there is $r>0$ such that $\Gamma(u)\cap B_r(z)$ is a $C^{1,\alpha}$ graph; here the radius $r$ and the $C^{1,\alpha}$ character of the graph may depend on $u$, $z$ and the H\"{o}lder norm of $a_\pm$.
\end{theorem}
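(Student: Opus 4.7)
The plan is a compactness-based improvement of flatness iteration, launched from the two-plane blowup produced by Lemma \ref{lemma:flat-ae}. Fix a point $z \in \Gamma(u)$ at which Lemma \ref{lemma:flat-ae} provides a blowup
\[
v(x) = \beta_+(x\cdot\nu)^+ - \beta_-(x\cdot\nu)^-, \qquad \beta_\pm = \beta/a_\pm(z),
\]
along some sequence $r_j \searrow 0$. For any prescribed $\e > 0$ one picks $r = r_{j_0}$ so small that $\norm{u_{z,r} - v}_{L^2(B_1)} \leq \e$; by the Lipschitz estimate in Theorem \ref{theorem:opt reg} together with interior H\"older estimates, this closeness upgrades to $C^\gamma$ closeness on $B_{3/4}$. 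Translating back, $\Gamma(u) \cap B_{r/2}(z)$ lies in the strip $\{|x\cdot\nu| \leq \e r\}$, and by the nondegeneracy $\beta > 0$ both $\{u > 0\}$ and $\{u < 0\}$ occupy a definite portion of each half-ball. This is the initial flatness datum to be iterated.

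The heart of the matter is an improvement of flatness lemma: there exist $\e_0, C > 0$ and $\lambda \in (0, 1/2)$, depending only on the universal data and the H\"older character of $a_\pm$, such that whenever $u$ is $\e$-flat with normal $\nu$ at a point $z \in \Gamma(u)$ at scale $r$, with $\e \leq \e_0$ and $r$ sufficiently small, then $u$ is $\lambda^{1+\alpha}\e$-flat at $z$ at scale $\lambda r$ with a new normal $\nu'$ satisfying $|\nu - \nu'| \leq C\e$. The proof is by contradiction and compactness: a counterexample sequence $(u_k, z_k, r_k, \nu_k, \e_k)$ with $\e_k \searrow 0$ is rescaled to
\[
\tilde u_k(x) = \frac{u_k(r_kx + z_k) - r_k\ell_k(x\cdot\nu_k)}{\e_k r_k},
\]
where $\ell_k$ encodes the current two-plane profile. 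Using the Lipschitz bound \eqref{eq:opt reg}, the ACF control supplied by Theorem \ref{theorem:acf}, and the two-sided nondegeneracy inherited from $\beta > 0$, one extracts a limit $\tilde u_\infty$ whose positive and negative phases satisfy linear elliptic equations on each side of $\{x\cdot\nu = 0\}$, coupled by a transmission condition coming from the formal free boundary identity $a_+u_\nu^+ = a_-u_\nu^-$. Concretely, $w := a_+(z_\infty)\tilde u_\infty^+ - a_-(z_\infty)\tilde u_\infty^-$ is harmonic in $B_{1/2}$ with at most linear growth, and classical regularity produces a tangent plane approximation with error at most $C\lambda^{1+\alpha}\e$, contradicting the failure of improvement. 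The H\"older exponent enters through $|A(x,s) - A_{z_k}(s)| \leq Cr_k^\alpha$ in $B_{r_k}(z_k)$, which is absorbed so long as $\e_k \geq Cr_k^\alpha$; the complementary regime is handled by Schauder theory applied phase by phase, since on each side of the flatness strip $u$ solves a linear elliptic equation with H\"older coefficients.

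Iterating the improvement at scales $\lambda^k r$ produces normals $\nu_k$ satisfying $|\nu_{k+1} - \nu_k| \leq C\lambda^{k\alpha}\e$, hence convergent to some $\nu_\infty$ at geometric rate, and flatness at $z$ at scale $\lambda^k r$ controlled by $\lambda^{k(1+\alpha)}\e$. This yields pointwise $C^{1,\alpha}$ differentiability of $\Gamma(u)$ at $z$ with tangent hyperplane $\{(x - z)\cdot\nu_\infty = 0\}$. To upgrade to a neighborhood statement, observe that the initial flatness hypothesis is open in the base point: any $z' \in \Gamma(u) \cap B_{r/4}(z)$ inherits a mildly relaxed flatness at scale $r/2$, so the iteration applies uniformly. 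Standard packaging of the resulting uniform pointwise $C^{1,\alpha}$ bounds then shows $\Gamma(u) \cap B_{r/4}(z)$ is a $C^{1,\alpha}$ graph. The chief obstacle is the improvement of flatness lemma itself: correctly identifying the linearized transmission problem, proving that the free boundaries of $\tilde u_k$ converge (so that the limit interface remains pinned to $\{x\cdot\nu = 0\}$ rather than escaping the flatness strip), and transferring gradient regularity across the transmission interface, is where the substantive technical work resides.
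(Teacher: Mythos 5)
Your overall architecture coincides with the paper's: start from the two--plane blowup of Lemma \ref{lemma:flat-ae}, establish an improvement--of--flatness step, iterate it geometrically to get pointwise $C^{1,\alpha}$ behaviour with a convergent sequence of normals, propagate the initial flatness to all nearby free boundary points (this is exactly the role of the paper's Lemma \ref{lemma:flat-nbd}), and package the uniform estimates into a graph statement. Where you genuinely diverge is in how the key lemma is proved. You propose a compactness/contradiction argument whose limit is a linearized transmission problem across $\{x\cdot\nu=0\}$, and you correctly flag that pinning the limit interface, proving convergence of the free boundaries of $\tilde u_k$, and transferring regularity across the interface is where the real work lies. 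The paper (Lemma \ref{lemma:approximation}) avoids all of this with a direct, non-compactness argument: it passes to $v=a_+(0)u^+-a_-(0)u^-$, which satisfies the \emph{single} equation $\Delta v=\ddiv(\sigma(x)\nabla u)$ in the whole ball with $\norm{\sigma\nabla u}_{L^\infty}\lesssim\e\eta$ (by the Lipschitz bound and the H\"older hypothesis on $a_\pm$), compares $v$ with its harmonic replacement, and reads off the improved flatness from the Taylor expansion of the replacement. Note in this connection a small imprecision in your sketch: for the normalized differences $\tilde u_k=(u_k-\text{two-plane})/(\e_k r_k)$ the combination $a_+\tilde u_\infty^+-a_-\tilde u_\infty^-$ is \emph{not} harmonic (the identity $\Delta(a_+(z)v^+-a_-(z)v^-)=0$ holds for blowups of $u$ itself, not for these differences); the limit of the $\tilde u_k$ solves a transmission problem, which is regular enough for your purposes but requires the extra analysis you acknowledge. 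The paper's substitution is precisely what dissolves that difficulty, and is worth adopting even inside a compactness scheme. Finally, your dichotomy $\e_k\geq Cr_k^\alpha$ versus the ``complementary regime handled by Schauder theory phase by phase'' is unnecessary (and the Schauder claim is dubious near the free boundary): since the flatness decays like $\lambda^{k\alpha}\e_0$ relative to the scale $\lambda^k r_0$ while the coefficient oscillation at that scale is $(\lambda^k r_0)^\alpha$, the ratio is constant along the iteration, so one only needs the coefficient smallness at the initial scale --- which is exactly how the paper's hypothesis \eqref{eq:assump-coeff} is arranged in the proof of the theorem.
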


For further analysis on the measure theoretic regularity of our free boundary, let us define the classes of nondegenerate and degenerate points.

\begin{definition}\label{definition:N-S} Let $u$ be a weak solution to \eqref{eq:main} in $\Omega$. Define $\cN(u)$ and $\cS(u)$ by the classes of nondegenerate points and respectively degenerate points, i.e.,
\begin{equation*}
\cN(u) := \left\{z\in\Gamma(u): \varlimsup_{r\ra 0}\frac{\norm{u}_{L^2(B_r)}}{r^{\frac{n}{2}+1}}>0\right\},
\end{equation*}
and
\begin{equation*}
\cS(u) := \Gamma(u)\setminus \cN(u) = \left\{z\in\Gamma(u): \varlimsup_{r\ra 0}\frac{\norm{u}_{L^2(B_r)}}{r^{\frac{n}{2}+1}}=0\right\}.
\end{equation*}
\end{definition}

Theorem \ref{theorem:C1a-fb} implies that our free boundary points are essentially nondegenerate, which opens up a way to characterize the measure $\mu$ with respect to $(n-1)$-dimensional Hausdorff measure.

\begin{theorem}\label{theorem:Hn-1} Under the assumption of Theorem \ref{theorem:C1a-fb}, $\spt(\mu)$ has $\sigma$-finite $(n-1)$-dimensional Hausdorff measure. More specifically, $\cN(u)$ has $\sigma$-finite $(n-1)$-dimensional Hausdorff measure, while $\cS(u)$ has $\mu$-measure zero. In addition, the conclusion of Theorem \ref{theorem:C1a-fb} holds for every $z\in\cN(u)$, and $\cN(u)$ is relatively open in $\Gamma(u)$. 
\end{theorem}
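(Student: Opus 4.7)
The plan is to upgrade Theorem \ref{theorem:C1a-fb} from its $\mu$-a.e.\ statement to a pointwise statement on $\cN(u)$, and then to read all four assertions off that upgrade. The pivotal step is showing that every $z\in\cN(u)$ admits a two-plane blowup, which I would produce via a Lipschitz-rescaled sequence rather than the $L^2$-normalized rescaling used to define $Blo(u,z)$. Concretely, at a fixed $z\in\cN(u)$ I would work with
\begin{equation*}
\tilde u_r(x):=\frac{u(rx+z)}{r},\quad x\in B_{1/r}.
\end{equation*}
By Theorem \ref{theorem:opt reg} the family $\{\tilde u_r\}$ is uniformly Lipschitz on compact subsets of $\R^n$, so Arzel\`a--Ascoli extracts a locally uniform subsequential limit $\tilde v\in W_{\mathrm{loc}}^{1,\infty}(\R^n)$ with $\tilde v(0)=0$. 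The very definition of $\cN(u)$ supplies a sequence $r_j\searrow 0$ with $\|\tilde u_{r_j}\|_{L^2(B_1)}\geq c>0$, so along this sequence $\tilde v\not\equiv 0$. Passing to the limit in the equation as in the proofs of Lemma \ref{lemma:Ca} and Lemma \ref{lemma:lip} yields $\ddiv(A_z(\tilde v)\nabla \tilde v)=0$ in $\R^n$, so the broken function $\tilde w:=a_+(z)\tilde v^+-a_-(z)\tilde v^-$ is entire harmonic with at most linear growth. Liouville forces $\tilde w$ to be affine; $\tilde w(0)=0$ kills the constant term and nontriviality of $\tilde v$ rules out a vanishing linear part, leaving the two-plane profile
\begin{equation*}
\tilde v(x)=\frac{(b\cdot x)^+}{a_+(z)}-\frac{(b\cdot x)^-}{a_-(z)},\qquad b\neq 0.
\end{equation*}

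With this two-plane approximation at some small scale $r_j$ as an initial flatness condition, I would feed it into the same flatness-improvement scheme (Lemma \ref{lemma:approximation}) that drives Theorem \ref{theorem:C1a-fb}. Run unchanged, the scheme produces a radius $r_z>0$ such that $\Gamma(u)\cap B_{r_z}(z)$ is a $C^{1,\alpha}$ graph, which gives assertion 3. On any such graph, boundary Schauder together with the Hopf lemma applied to $u^\pm$ on each phase force a strictly positive one-sided normal derivative at every boundary point, so every point of $\Gamma(u)\cap B_{r_z}(z)$ is nondegenerate; therefore $\Gamma(u)\cap B_{r_z}(z)\subseteq \cN(u)$, which is the relative openness asserted in assertion 4. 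To control the Hausdorff measure of $\cN(u)$ I would stratify $\cN(u)=\bigcup_{k\geq 1}E_k$, where $E_k$ consists of those $z\in\cN(u)$ for which $r_z\geq 1/k$ and the $C^{1,\alpha}$-norm of the graph at $z$ is at most $k$. On any compact $K\Subset\Omega$, a Vitali-type extraction covers $E_k\cap K$ by finitely many balls on each of which $\Gamma(u)$ is a single $C^{1,\alpha}$ graph over a hyperplane, hence $E_k\cap K$ has finite $\cH^{n-1}$-measure, and $\sigma$-finiteness of $\cN(u)$ follows upon exhausting $\Omega$.

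For the two assertions concerning $\mu$ I would combine the previous steps. Since Theorem \ref{theorem:C1a-fb} yields $C^{1,\alpha}$ regularity of $\Gamma(u)$ at $\mu$-a.e.\ $z$, and the Hopf argument of the second paragraph shows that $C^{1,\alpha}$ regularity at $z$ forces $z\in \cN(u)$, the $\mu$-null exceptional set of Theorem \ref{theorem:C1a-fb} must contain $\cS(u)$, whence $\mu(\cS(u))=0$. Combined with $\spt(\mu)\subseteq\Gamma(u)$ from Lemma \ref{lemma:pos-msr}, this forces $\spt(\mu)\subseteq\overline{\cN(u)}$ inside $\Omega$; on any $K\Subset\Omega$ the closure $\overline{\cN(u)}\cap K$ is covered by finitely many closed $C^{1,\alpha}$ graphs produced from the stratification, so $\spt(\mu)$ inherits $\sigma$-finite $(n-1)$-dimensional Hausdorff measure.

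The hardest step will be the first one: producing a two-plane blowup at every individual $z\in\cN(u)$. Lemma \ref{lemma:flat-ae} supplies such profiles only $\mu$-a.e., via Lemma \ref{lemma:blowup-twice}, which relies on $\mu$-density points that are unavailable for an arbitrary individual $z$. The Lipschitz rescaling $u(rx+z)/r$ sidesteps this by converting the pointwise nondegeneracy $\limsup_{r\to 0}\|u\|_{L^2(B_r(z))}/r^{n/2+1}>0$ directly into nontriviality of $\tilde v$; once nontriviality is secured, the Liouville reduction to a two-plane is classical. A secondary point to address is that the closed strata $\overline{E_k}$ remain covered, on compact subsets of $\Omega$, by finitely many $C^{1,\alpha}$ graphs; this should follow from the uniform $C^{1,\alpha}$ estimates enforced within $E_k$ together with a straightforward Arzel\`a--Ascoli argument on graphs.
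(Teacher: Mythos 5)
Your proposal is essentially correct, and its core --- producing a two-plane blowup at \emph{every} $z\in\cN(u)$ via the Lipschitz rescaling $u(rx+z)/r$, Liouville for the broken harmonic function $\tilde w$, then rerunning the flatness-improvement scheme of Lemma \ref{lemma:approximation} and invoking Hopf's lemma on each phase to get relative openness --- is exactly the paper's argument for the last two assertions (the paper compresses the Liouville step into one sentence, which you correctly identify and fill in as the crux). Where you genuinely diverge is the $\sigma$-finiteness: the paper does \emph{not} derive it from the $C^{1,\alpha}$ graphs, but from a separate quantitative covering estimate (Lemma \ref{lemma:N-hauss}) that decomposes $\cN(u)=\bigcup\cN_{j,k}(u)$ by nondegeneracy constants and bounds $\cH^{n-1}(\cN_{j,k}(u))$ using the Lipschitz estimate, the Poincar\'e inequality on sub-balls of each phase, and an energy bound on the strip $\{|u|<\eta\}$ obtained from the subharmonicity of $u^\pm$. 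Your graph-stratification by $(r_z,\ \text{graph norm})$ is a legitimate and arguably shorter substitute given the H\"older hypothesis, but it buys less: the paper's Lemma \ref{lemma:N-hauss} requires only Dini continuity of $a_\pm$ (Remark \ref{remark:N-hauss}), whereas your route needs the full $C^{1,\alpha}$ machinery. Your logical order is also reversed (pointwise statement first, then $\mu(\cS(u))=0$ as a corollary via Hopf; the paper first gets $\mu(\cS(u))=0$ by contradiction from the $\mu$-a.e.\ version of Theorem \ref{theorem:C1a-fb} together with the strong convergence to $P_{\beta,e}$ in Corollary \ref{corollary:approximation}), but both directions are sound. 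One caveat you partially flag yourself: passing from ``$\cN(u)$ is $\sigma$-finite'' to ``$\spt(\mu)$ is $\sigma$-finite'' requires covering $\overline{\cN(u)}\cap\Omega$, and $\overline{\bigcup_k E_k}$ is not $\bigcup_k\overline{E_k}$; your closed-strata covering controls only the latter. The paper is equally terse on this point (its $\cN_{j,k}(u)$ are at least closed by construction, which slightly eases the issue), so this is a shared imprecision rather than a defect of your approach.
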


Let us observe an elementary fact for the future reference. 

\begin{lemma}\label{lemma:comparison} Let $D$ be a domain in $\R^n$, $1\leq p\leq\infty$ and $f,g\in L^p(D)$. Then for any given positive numbers $\beta_+$ and $\beta_-$, there holds
\begin{equation*}
\norm{(\beta_+f^+ -\beta_-f^-) - (\beta_+g^+ - \beta_-g^-)}_{L^p(D)}\leq (\beta_+ + \beta_-)\norm{f-g}_{L^p(D)}.
\end{equation*}
\end{lemma}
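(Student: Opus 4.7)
The plan is to reduce the claim to a pointwise inequality and then integrate. Concretely, I would prove the pointwise bound
\[
\bigl|(\beta_+ f^+(x) - \beta_- f^-(x)) - (\beta_+ g^+(x) - \beta_- g^-(x))\bigr| \leq (\beta_+ + \beta_-)\,|f(x) - g(x)|
\]
for every $x \in D$, and then take $L^p$ norms on both sides (using monotonicity of the $L^p$ norm, or equivalently Minkowski for $p < \infty$ and the obvious bound for $p = \infty$).

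For the pointwise bound, the main observation is that the scalar maps $t \mapsto t^+$ and $t \mapsto t^-$ are both $1$-Lipschitz on $\R$; this is standard and follows for instance from $t^\pm = \tfrac{1}{2}(|t| \pm t)$ together with the reverse triangle inequality $||s| - |t|| \leq |s - t|$. Applying the triangle inequality to split the difference and then using these two Lipschitz estimates gives
\[
\begin{aligned}
\bigl|(\beta_+ f^+ - \beta_- f^-) - (\beta_+ g^+ - \beta_- g^-)\bigr|
&\leq \beta_+ |f^+ - g^+| + \beta_- |f^- - g^-| \\
&\leq \beta_+ |f-g| + \beta_- |f-g| \\
&= (\beta_+ + \beta_-)\,|f-g|.
\end{aligned}
\]

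Taking the $L^p(D)$ norm of both sides then yields the stated estimate. There is no real obstacle here: the only mild subtlety is justifying the $1$-Lipschitz property of the positive/negative parts, which is entirely elementary. I would mention that in fact the sharper constant $\max(\beta_+,\beta_-)$ works by a short case analysis on the signs of $f(x)$ and $g(x)$, but since the lemma only needs $\beta_+ + \beta_-$, the triangle-inequality argument above is the cleanest presentation.
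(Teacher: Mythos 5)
Your proof is correct and is essentially the same elementary argument as the paper's: both reduce the claim to a pointwise estimate and integrate, the paper doing an explicit case analysis on the signs of $f$ and $g$ (noting that on the mixed-sign set both $|f|$ and $|g|$ are dominated by $|f-g|$), while you package the same observation as the $1$-Lipschitz property of $t\mapsto t^{\pm}$ plus the triangle inequality. Your closing remark that the sharper constant $\max(\beta_+,\beta_-)$ suffices is also correct, though, as you say, not needed.
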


\begin{proof} Set $\delta:=\norm{f-g}_{L^p(D)}$. We only need to consider the inequality on $D\cap\{f\geq 0\}\cap\{g\leq 0\}$ and $D\cap \{f\leq0\}\cap \{g\geq 0\}$. Due to the symmetry of the proof, let us only focus on the former set, which we denote by $E$, for notational convenience. Then $0\leq -g \leq f-g$ and $0\leq f\leq f-g$ on $E$, which implies that $\norm{g}_{L^p(E)}\leq\delta$ and $\norm{f}_{L^p(E)}\leq \delta$. Therefore,
\begin{equation*}
\begin{split}
\norm{(\beta_+f^+ -\beta_-f^-) - (\beta_+g^+ - \beta_-g^-)}_{L^p(E)} &= \norm{\beta_+ f - \beta_-g}_{L^p(E)}\\
&\leq (\beta_+ + \beta_-)\delta,
\end{split}
\end{equation*}
and thus the proof is finished.
\end{proof}

In what follows we are going to use Lemma \ref{lemma:comparison} with $p=2$ or $\infty$. Let us introduce a notation for two plane solutions.

\begin{definition}\label{definition:two plane} Given $\beta>0$, $z\in\R^n$ and a vector $\nu$ in $\R^n$, define a function $P_{\beta,\nu}^z$ on $\R^n$ by
\begin{equation*}
P_{\beta,\nu}^z(x):=\frac{\beta}{a_+(z)}(x\cdot\nu)^+-\frac{\beta}{a_-(z)}(x\cdot \nu)^-,
\end{equation*}
In particular, we denote $P_{\beta,\nu}^z$ by $P_{\beta,\nu}$. 
\end{definition}

\begin{lemma}\label{lemma:approximation}
Let $u$ be a weak solution to \eqref{eq:main} in $B_1$ satisfying $0\in \Gamma(u)$, $\norm{\nabla u}_{L^\infty(B_1)}\leq 1$ and 
\begin{equation}\label{eq:assump-flat}
\norm{u - P_{\beta,\nu}}_{L^2(B_1)}\leq\e,
\end{equation}
for some $\beta,\e>0$ and a nonzero vector $\nu\in\R^n$. There are (small) positive universal constants $\eta$ and $\bar{r}$ such that if $a_+$ and $a_-$ satisfy
\begin{equation}\label{eq:assump-coeff}
|a_\pm(x) - a_\pm(0)|\leq \e\eta|x|^\alpha\quad\text{in }B_1, 
\end{equation}
then there exists a sequence $\{\nu^k\}_{k=0}^\infty$ of vectors in $\R^n$ such that 
\begin{equation}\label{eq:improve-flat}
\frac{1}{\bar{r}^{n/2}}\norm{u - P_{\beta,\nu^k}}_{L^2(B_{\bar{r}^k})}\leq \e\bar{r}^{k(1+\alpha)},\quad\forall k=0,1,2,\cdots,
\end{equation}
and that with a universal constant $c_0>0$,
\begin{equation}\label{eq:nu}
|\nu^k-\nu^{k-1}|\leq \frac{c_0}{\beta}\e\bar{r}^{k\alpha},\quad\forall k=1,2,\cdots.
\end{equation}
\end{lemma}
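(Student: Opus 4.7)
I would proceed by induction on $k$, with the base case $k=0$ immediate after taking $\nu^0 := \nu$, since \eqref{eq:assump-flat} then reduces to \eqref{eq:improve-flat} at level $0$. The core of the argument is a \emph{one-step improvement}: if $u$ satisfies the assumptions of the lemma at scale $1$ with parameter $\tilde\e$ (meaning $\|u-P_{\beta,\nu}\|_{L^2(B_1)}\leq\tilde\e$ and $|a_\pm(x)-a_\pm(0)|\leq\tilde\e\,\eta\,|x|^\alpha$), then there exists $\nu'$ with $|\nu'-\nu|\leq c_0\tilde\e/\beta$ such that
\[
\bar r^{-n/2}\|u - P_{\beta,\nu'}\|_{L^2(B_{\bar r})} \leq \tilde\e\,\bar r^{1+\alpha}.
\]
Granted this, the lemma follows by iterating on the rescaled functions $u_k(x) := u(\bar r^k x)/\bar r^k$, which solve the same PDE with coefficients $a_\pm^k(x) := a_\pm(\bar r^k x)$; the $1$-homogeneity of $P_{\beta,\cdot}$ ensures that both the flatness and the coefficient hypothesis are preserved at each level with $\tilde\e$ replaced by $\tilde\e_k := \e\,\bar r^{k\alpha}$, and the update bound $|\nu^{k+1}-\nu^k|\leq c_0\tilde\e_k/\beta$ telescopes to \eqref{eq:nu}.

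\textbf{One-step improvement via compactness.} I would establish the one-step claim by contradiction. Suppose it fails: for each $j$ there exist $\tilde\e_j>0$, vectors $\nu_j$, coefficients $a_\pm^j$ with moduli $\eta_j\searrow 0$, and solutions $u_j$ meeting the hypotheses at scale $1$, for which no admissible $\nu'$ yields the improved bound at scale $\bar r$. Passing to subsequences, assume $\nu_j\to\nu_\infty$ and $a_\pm^j(0)\to a_\pm^\infty$. Consider the normalized perturbations
\[
v_j := \frac{u_j - P_{\beta,\nu_j}}{\tilde\e_j},
\]
which satisfy $\|v_j\|_{L^2(B_1)}\leq 1$. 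Using the weak form of \eqref{eq:main}, interior energy estimates, and the Lipschitz control from Lemma~\ref{lemma:lip}, I would show that $\{v_j\}$ is precompact in $L^2_{loc}(B_1)$ and—on each side of $\{x\cdot\nu_\infty=0\}$—in $C^\gamma_{loc}$, extracting a limit $v_\infty\in L^2(B_1)$.

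\textbf{Limit equation and extraction of the improvement.} The flatness assumption traps $\{u_j=0\}$ in an $O(\tilde\e_j)$-neighborhood of $\{x\cdot\nu_j=0\}$, so $H(u_j)\to H(x\cdot\nu_\infty)$ a.e.; combined with $a_\pm^j\to a_\pm^\infty$, passing to the limit in the weak formulation shows that $v_\infty$ satisfies a constant-coefficient transmission problem,
\[
a_+^\infty\Delta v_\infty = 0 \ \text{in }\{x\cdot\nu_\infty>0\}, \qquad a_-^\infty\Delta v_\infty = 0 \ \text{in }\{x\cdot\nu_\infty<0\},
\]
with the linearized flux condition across $\{x\cdot\nu_\infty=0\}$ obtained by differentiating the free boundary condition $a_+u_\nu^+ = a_-u_\nu^-$ (cf.\ Lemma~\ref{lemma:weak-FB-condition}) about $P_{\beta,\nu_\infty}$. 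Classical boundary regularity for this constant-coefficient transmission problem yields a first-order expansion of $v_\infty$ at $0$ whose linear part has the shape $P_{\beta,\nu_\infty+\tau_\infty} - P_{\beta,\nu_\infty}$ for some $\tau_\infty\in\R^n$ with $|\tau_\infty|\leq c_0/\beta$. Setting $\nu'_j := \nu_j + \tilde\e_j\tau_\infty$ and combining Lemma~\ref{lemma:comparison} with the $L^2$-convergence $v_j\to v_\infty$, one then obtains
\[
\bar r^{-n/2}\|u_j - P_{\beta,\nu'_j}\|_{L^2(B_{\bar r})} \leq \tilde\e_j\bigl(C\bar r^{1+\alpha} + o(1)\bigr),
\]
which beats $\tilde\e_j\bar r^{1+\alpha}$ once $\bar r$ is taken small (fixed once and for all, in terms of the universal constant $C$) and $j$ is large, contradicting the failure assumption.

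\textbf{Main obstacle.} The delicate step is the passage to the limit in the weak form, since $A(x,u_j)$ depends discontinuously on $u_j$. The flatness hypothesis is exactly the ingredient that makes this tractable: by confining the jump locus $\{u_j=0\}$ to a thin neighborhood of $\{x\cdot\nu_j=0\}$, it guarantees the a.e.\ convergence of $H(u_j)$ and hence the identification of the limit transmission problem. A secondary point is to verify that the linear part extracted from $v_\infty$ really corresponds to an update of the \emph{direction} $\nu$ rather than of $\beta$; this follows from the transmission condition together with the convention that the $\beta$ parameter is kept fixed along the iteration, so that any ``scaling mode'' of the limit is absorbed into the magnitude of the updated $\nu^{k+1}$.
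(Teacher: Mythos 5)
The route you propose (compactness of the normalized perturbations plus linearization to a constant-coefficient transmission problem, in the style of De Silva) is genuinely different from the paper's, and it has a real gap at its crux: the precompactness of $v_j=(u_j-P_{\beta,\nu_j})/\tilde\e_j$. The only uniform bound available is $\norm{v_j}_{L^2(B_1)}\leq 1$; the Lipschitz bound on $u_j$ gives only $|\nabla v_j|\lesssim \tilde\e_j^{-1}$, which blows up, and interior energy estimates do not apply to $v_j$ directly, because the source term in its equation --- coming from the mismatch between $\{u_j=0\}$ and $\{x\cdot\nu_j=0\}$ --- is the divergence of a bounded field supported on a set of measure $O(\tilde\e_j)$, hence of order one after division by $\tilde\e_j$ and singular (surface-concentrating) in the limit. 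Weak $L^2$ compactness is not enough to identify $H(u_j)\to H(x\cdot\nu_\infty)$ inside the nonlinearity or to pass to the limit in $A(x,u_j)\nabla v_j$; in arguments of this type one must first prove an oscillation-decay (partial Harnack) statement for $v_j$, and you cannot obtain it by citing ``interior energy estimates and Lipschitz control''. Relatedly, extracting the flux condition for $v_\infty$ ``by differentiating the free boundary condition'' is heuristic: Lemma \ref{lemma:weak-FB-condition} is a weak integral identity valid only under extra perimeter hypotheses, not a pointwise relation one can linearize, and the boundary regularity theory for the resulting transmission problem would itself have to be set up.

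The paper sidesteps all of this with the substitution $v_k:=a_+(0)u_k^+-a_-(0)u_k^-$ --- note the weights $a_\pm(0)$, which your normalization omits. With this choice the two-phase structure disappears: $w_k:=(v_k-\beta(x\cdot\nu^k))/(\e\bar r^{k\alpha})$ satisfies $\Delta w_k=\frac{1}{\e\bar r^{k\alpha}}\ddiv(\sigma_k\nabla u_k)$ with $\frac{1}{\e\bar r^{k\alpha}}\norm{\sigma_k\nabla u_k}_{L^\infty(B_1)}\leq\eta$, thanks precisely to the hypotheses $\norm{\nabla u}_{L^\infty(B_1)}\leq 1$ and \eqref{eq:assump-coeff} (your proposal never puts the Lipschitz hypothesis to this use). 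One then compares $w_k$ with its harmonic replacement $h$ (so $\norm{w_k-h}_{L^2(B_1)}\leq c_1\eta$), Taylor-expands $h$ at the origin, sets $\nu^{k+1}=\nu^k+\e\bar r^{k\alpha}\beta^{-1}\nabla h(0)$, and closes the induction by choosing first $\bar r$ and then $\eta$ universally small, with Lemma \ref{lemma:comparison} translating between $u$ and $v_k$. This is entirely explicit, needs no compactness, and is uniform in $\beta$ and $\nu$ (a point your contradiction argument would also have to address, since a compactness extraction degenerates as $\beta_j\to 0$ or $\to\infty$). To salvage your route you would need to supply the missing partial Harnack step; alternatively, switching to the weighted difference reduces your argument to the paper's.
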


\begin{proof} From \eqref{eq:assump-flat} observe that the initial case for \eqref{eq:improve-flat} is satisfied by simply setting $\nu^0=\nu$. 

Now we suppose that \eqref{eq:improve-flat} and \eqref{eq:nu} are met for some $k\geq 0$. Then define a function $u_k$ in $B_1$ by $u_k(x):= u(\bar{r}^kx)/\bar{r}^k$. Since $\norm{\nabla u}_{L^\infty(B_1)}\leq 1$, we obtain $\norm{\nabla u_k}_{L^\infty(B_1)}=\norm{\nabla u}_{L^\infty(B_{\bar{r}^k})}\leq 1$. 

Consider functions $v_k$ and $w_k$ defined in $B_1$ by 
\begin{equation*}
v_k(x):=a_+(0)u_k^+(x)-a_-(0)u_k^-(x),
\end{equation*}
and respectively by
\begin{equation*}
w_k(x):=\frac{v_k(x)-\beta(x\cdot\nu^k)}{\e\bar{r}^{k\alpha}}.
\end{equation*}
By the induction hypothesis on \eqref{eq:improve-flat} and Lemma \ref{lemma:comparison} we have 
\begin{equation}\label{eq:w-bound}
\norm{w_k}_{L^2(B_1)}\leq\frac{2}{\lambda}.
\end{equation}
On the other hand, one may observe from \eqref{eq:main} that $w_k$ solves
\begin{equation}\label{eq:w-equation}
\Delta w_k=\frac{1}{\e\bar{r}^{k\alpha}}\ddiv(\sigma_k(x)\nabla u_k)\quad\text{in $B_1$}
\end{equation}
in the sense of distribution, where
\begin{equation*}
\sigma_k(x):=(a_+(0)-a_+(\bar{r}^kx))H(u_k)+(a_-(0)-a_-(\bar{r}^kx))(1-H(u_k)).
\end{equation*}
Since $\norm{\nabla u_k}_{L^\infty(B_1)}\leq 1$, we deduce from \eqref{eq:assump-coeff} that 
\begin{equation}\label{eq:small-rhs}
\frac{1}{\e\bar{r}^{k\alpha}}\norm{\sigma_k\nabla u_k}_{L^\infty(B_1)}\leq \eta.
\end{equation}

Now consider the harmonic replacement $h$ of $w_k$; i.e., $\Delta h=0$ in $B_1$ with $h=w$ on $\p B_1$. With \eqref{eq:small-rhs} at hand, we may apply the global $L^\infty$, and thus, $L^2$ estimate (e.g., Theorem 8.15 in \cite{GT}) to \eqref{eq:w-equation} and obtain 
\begin{equation}\label{eq:w-h}
\norm{w_k-h}_{L^2(B_1)}\leq c_1\eta.
\end{equation}

Due to \eqref{eq:w-bound} and \eqref{eq:w-h}, it follows that $\norm{h}_{L^2(B_1)}\leq c_2$; here we assume that $\eta<1$, which will be fulfilled later. By the interior estimates for derivatives of harmonic functions, we know that $|\nabla h(0)|\leq c_3$ and $\norm{D^2h}_{L^\infty(B_{1/2})}\leq c_4$. Let us denote by $l(x)$ the linear function $\nabla h(0)\cdot x$. Then, the Taylor expansion yields that if $0<r\leq\frac{1}{2}$,
\begin{equation*}
\frac{1}{\bar{r}^{n/2}}\norm{h - h(0) - l}_{L^2(B_{\bar{r}})}\leq\frac{1}{2}c_4r^2.
\end{equation*}
On the other hand, since $w_k(0) = 0$, it follows from \eqref{eq:w-h} that $|h(0)| = |h(0) - w_k(0)|\leq c_1\eta$. Therefore, as we denote the linear function $\nabla h(0)\cdot x$ by $l(x)$, we derive that
\begin{equation*}
\begin{split}
\frac{1}{\bar{r}^{n/2}}\norm{w_k - l}_{L^2(B_{\bar{r}})}&\leq \frac{\lambda}{2}\bar{r}^{1+\alpha},
\end{split}
\end{equation*}
as we choose $\bar{r}$ small enough such that $2c_4\bar{r}^2\leq \lambda\bar{r}^{1+\alpha}$, and accordingly select $\eta$ so as to satisfy $8c_1\eta\leq\lambda\bar{r}^{1+\alpha}$. 

Let us define $\nu^{k+1}:= \nu^k + \e\bar{r}^{k\alpha}\beta^{-1}\nabla h_0(0)$. Then as we rephrase the above inequality in terms of $v_k$ and then apply Lemma \ref{lemma:comparison}, we arrive at 
\begin{equation*}
\frac{1}{\bar{r}^{(k+1)n/2}}\norm{u - P_{\beta,\nu^{k+1}}}_{L^2(B_{\bar{r}^{k+1}})}\leq \e\bar{r}^{(k+1)(1+\alpha)}.
\end{equation*}
This inequality is exactly \eqref{eq:improve-flat} with $k$ replaced by $k+1$. Owing to the fact that $|\nabla h(0)|\leq c_3$, the inequality in \eqref{eq:nu} is also true again for $k+1$ instead of $k$. The proof is finished by the induction principle.
\end{proof}

An immediate consequence of the preceding lemma is that if $u$ is close to a two plane solution around a free boundary point, then the free boundary can be locally trapped in between two $C^{1,\alpha}$ graph. We skip the proof; one can easily deduce it by making use of Lemma \ref{lemma:comparison}.

\begin{corollary}\label{corollary:approximation} Under the circumstance of Lemma \ref{lemma:approximation}, there exists a unit vector $e$ in $\R^n$ such that 
\begin{equation*}
\frac{1}{r^{n/2}}\norm{u - P_{\beta,e}}_{L^2(B_r)}\leq C\e r^{1+\alpha},
\end{equation*}
and in particular, 
\begin{equation*}
\Gamma(u)\cap B_1\subset \{x\in B_1: |x\cdot e|\leq C\e\beta^{-1}|x|^{1+\alpha}\}.
\end{equation*}
\end{corollary}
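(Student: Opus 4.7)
The plan is to pass to the limit in the sequence $\{\nu^k\}$ provided by Lemma \ref{lemma:approximation}, interpolate the $L^2$ bound \eqref{eq:improve-flat} across all scales via Lemma \ref{lemma:comparison}, and then upgrade the resulting $L^2$ closeness to pointwise information on $\Gamma(u)$ through Lipschitz regularity.

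\textbf{Extracting a limit direction.} Since $\sum_{k\geq 1}\bar r^{k\alpha}$ is a convergent geometric series, estimate \eqref{eq:nu} gives
\[
\sum_{i\geq k+1}|\nu^i-\nu^{i-1}|\leq\frac{c_0\e}{\beta(1-\bar r^\alpha)}\bar r^{k\alpha},
\]
so $\{\nu^k\}$ is Cauchy and converges to some $\nu^\infty$ with $|\nu^\infty-\nu^k|\leq C\e\bar r^{k\alpha}/\beta$ for every $k$. Taking $\e$ small relative to $\beta|\nu|$ keeps $|\nu^\infty|$ bounded below, so I set $e:=\nu^\infty/|\nu^\infty|$; absorbing the scalar $|\nu^\infty|$ into a harmless redefinition of $\beta$ (if one insists on the exact normalization in the statement) leaves the conclusion invariant.

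\textbf{Interpolating in scale.} For $r\in(0,1]$ pick $k$ with $\bar r^{k+1}<r\leq\bar r^k$, so $B_r\subset B_{\bar r^k}$, and triangulate
\[
\norm{u-P_{\beta,e}}_{L^2(B_r)}\leq\norm{u-P_{\beta,\nu^k}}_{L^2(B_{\bar r^k})}+\norm{P_{\beta,\nu^k}-P_{\beta,e}}_{L^2(B_{\bar r^k})}.
\]
The first term is controlled by \eqref{eq:improve-flat}. For the second I apply Lemma \ref{lemma:comparison} with $f(x)=x\cdot\nu^k$, $g(x)=x\cdot e$, and $\beta_\pm=\beta/a_\pm(0)$; the resulting $\norm{f-g}_{L^2(B_{\bar r^k})}$ is bounded by $|\nu^k-e|\cdot\bar r^{k(n/2+1)}$, which combined with the previous step is $\leq C\e\bar r^{k(n/2+1+\alpha)}$. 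Dividing by $r^{n/2}$ and using $\bar r^k\leq r/\bar r$ yields the first conclusion with a universal $C$.

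\textbf{Pointwise inclusion of $\Gamma(u)$.} Let $x\in\Gamma(u)\cap B_1$. Since $u(x)=0$,
\[ |(u-P_{\beta,e})(x)|=|P_{\beta,e}(x)|\geq\lambda\beta|x\cdot e|. \]
Both $u$ (by Theorem \ref{theorem:opt reg}) and $P_{\beta,e}$ are Lipschitz with constants controlled by universal quantities and $\beta$, so $|u-P_{\beta,e}|\gtrsim|P_{\beta,e}(x)|$ on a small ball around $x$; integrating this lower bound and comparing with the first-conclusion $L^2$ estimate at scale $\sim|x|$ pins $|P_{\beta,e}(x)|$ down and produces the claimed bound on $|x\cdot e|$.

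\textbf{Main obstacle.} The delicate point is matching the sharp exponent $|x|^{1+\alpha}$: a direct Lipschitz-to-$L^\infty$ upgrade of an $L^2$ bound degrades the exponent by a factor $2/(n+2)$. This is remedied by rescaling $u_r(y):=u(ry)/r$, which at each scale $r$ reduces the estimate to a unit-scale $L^2$ bound of order $\e r^\alpha$, and then invoking interior $C^{1,\alpha}$ regularity of $u$ within each phase (available under the H\"older hypothesis on $a_\pm$) to propagate the sharp exponent from the flatness improvement of Lemma \ref{lemma:approximation} down to the free boundary itself.
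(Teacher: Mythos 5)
The paper gives no proof of this corollary (it is explicitly skipped, with a pointer to Lemma \ref{lemma:comparison}), so your route is the natural one to compare against. Your treatment of the first conclusion is correct and essentially complete: the telescoping bound from \eqref{eq:nu} makes $\{\nu^k\}$ Cauchy, and the triangle inequality at the scale $\bar r^k$ bracketing $r$, together with Lemma \ref{lemma:comparison} applied to $f=x\cdot\nu^k$ and $g=x\cdot\nu^\infty$, gives the stated rate. One caveat you only half-address: you must choose $e$ so that $P_{\beta',e}=P_{\beta,\nu^\infty}$ \emph{exactly}, i.e.\ $\beta'=\beta|\nu^\infty|$ and $e=\nu^\infty/|\nu^\infty|$. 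Normalizing $\nu^\infty$ to a unit vector while keeping the same $\beta$ creates an error of size $\beta\,\bigl|1-|\nu^\infty|\bigr|\,|x|\sim\e|x|$, which does not decay like $r^{1+\alpha}$; with the $O(\e)$ redefinition of $\beta$ (which leaves the second conclusion intact) this is harmless.

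The genuine soft spot is your final step. You correctly diagnose that a Lipschitz-based $L^2\to L^\infty$ upgrade degrades the exponent, but the remedy you name --- interior $C^{1,\alpha}$ regularity of $u$ within each phase --- does not work: those interior estimates deteriorate as one approaches $\Gamma(u)$ and give no information at the free boundary point $x$ itself, and regularity of $u$ alone does not quantify its distance to $P_{\beta,e}$. The tool that does work, and that the paper has already set up, is the sup-estimate for the normalized difference \emph{across} the free boundary: with $u_k(y)=u(\bar r^ky)/\bar r^k$, $v_k=a_+(0)u_k^+-a_-(0)u_k^-$ and $w_k=(v_k-\beta(y\cdot\nu^k))/(\e\bar r^{k\alpha})$, one has $\Delta w_k=\e^{-1}\bar r^{-k\alpha}\ddiv(\sigma_k\nabla u_k)$ with $\norm{w_k}_{L^2(B_1)}\leq 2/\lambda$ and right-hand side bounded by $\eta$ in $L^\infty$, so Theorem 8.17 of \cite{GT} gives $\norm{w_k}_{L^\infty(B_{1/2})}\leq C$; this is precisely the derivation of \eqref{eq:assump-flat-nbd-linf} in Lemma \ref{lemma:flat-nbd}. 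Evaluating at $y=x/\bar r^k$ for $x\in\Gamma(u)$ with $|x|\sim\bar r^k$, where $v_k(y)=0$, yields $\beta|x\cdot\nu^k|\leq C\e\bar r^{k(1+\alpha)}\leq C\e|x|^{1+\alpha}$, and the bound $|\nu^k-e|\leq C\e\bar r^{k\alpha}/\beta$ converts this into the stated inclusion with the sharp exponent. Replacing your final paragraph by this argument completes the proof.
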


\begin{lemma}\label{lemma:flat-nbd} Let $u$ be a weak solution to \eqref{eq:main} in $B_1$ with $\norm{u}_{L^2(B_1)}\leq 1$. Suppose that $u$ satisfies 
\begin{equation}\label{eq:assump-flat-nbd}
\norm{u - P_{\beta,\nu}}_{L^2(B_{7/8})}\leq\delta
\end{equation}
and 
\begin{equation}\label{eq:assump-coeff-flat-nbd}
|a_\pm(x) - a_\pm(0)|\leq \delta|x|^\alpha\quad\text{in }B_1.
\end{equation}
for some positive numbers $\beta$ and $\delta<1$, and a unit vector $\nu\in\R^n$. Then for $0<\rho\leq\frac{3}{8}$ satisfying $\rho^{1+\alpha}\leq\frac{c_1}{\beta}$, there holds for any $z\in B_\rho\cap\Gamma(u)$, $B_\rho(z)\subset B_{3/4}$ and 
\begin{equation}\label{eq:flat-nbd}
\norm{\frac{u(\rho \cdot + z)}{\rho} - P_{\beta,\nu}^z}_{L^2(B_1)}\leq \frac{c_2\delta}{\rho}. 
\end{equation} 
\end{lemma}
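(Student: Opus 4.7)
The plan is to reduce the claimed inequality to a pointwise bound $|u(y)-P_{\beta,\nu}^z(y-z)|\leq C\delta$ on $B_\rho(z)$, from which the $L^2$ estimate follows by a trivial change of variables. This pointwise bound itself will rest on an $L^\infty$-estimate $\norm{u-P_{\beta,\nu}}_{L^\infty(B_{3/4})}\leq C\delta$, which I propose to obtain via a ``broken harmonic'' argument.

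First, $|z|\leq\rho\leq 3/8$ gives $B_\rho(z)\subset B_{3/4}$. To derive the $L^\infty$-bound, I set $\bar u:=a_+(0)u^+-a_-(0)u^-$; its counterpart for $P_{\beta,\nu}$ is the linear (hence harmonic) function $\beta(x\cdot\nu)$. Using the weak formulation of \eqref{eq:main}, the hypothesis $|a_\pm(x)-a_\pm(0)|\leq\delta|x|^\alpha$, and the Lipschitz bound $\norm{\nabla u}_{L^\infty(B_{7/8})}\leq C$ from Theorem \ref{theorem:opt reg}, a direct computation yields
\begin{equation*}
\Delta(\bar u-\beta(x\cdot\nu))=\ddiv F\quad\text{in the sense of distributions on }B_1,
\end{equation*}
for some $F$ with $\norm{F}_{L^\infty(B_1)}\leq C\delta$. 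By Lemma \ref{lemma:comparison} and the flatness hypothesis, $\norm{\bar u-\beta(x\cdot\nu)}_{L^2(B_{7/8})}\leq C\delta$, and the standard interior $L^\infty$-$L^2$ estimate for divergence-form Poisson equations then gives $\norm{\bar u-\beta(x\cdot\nu)}_{L^\infty(B_{3/4})}\leq C\delta$. A pointwise check on the sign configurations of $u$ and $P_{\beta,\nu}$ shows that $|u-P_{\beta,\nu}|\leq|\bar u-\beta(x\cdot\nu)|/\lambda$, whence $\norm{u-P_{\beta,\nu}}_{L^\infty(B_{3/4})}\leq C\delta$.

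Armed with this pointwise control, $u(z)=0$ forces $|P_{\beta,\nu}(z)|\leq C\delta$; combined with $|P_{\beta,\nu}(z)|\geq\lambda\beta|z\cdot\nu|$, this yields $|z\cdot\nu|\leq C\delta/(\lambda\beta)$. For $y\in B_\rho(z)$ I decompose
\begin{equation*}
u(y)-P_{\beta,\nu}^z(y-z)=[u(y)-P_{\beta,\nu}(y)]+[P_{\beta,\nu}(y)-P_{\beta,\nu}(y-z)]+[P_{\beta,\nu}(y-z)-P_{\beta,\nu}^z(y-z)].
\end{equation*}
The first bracket is $\leq C\delta$ by the $L^\infty$-estimate; the second is $\leq(\beta/\lambda)|z\cdot\nu|\leq C\delta/\lambda^2$ since $P_{\beta,\nu}$ depends on its argument only through $x\cdot\nu$ and is Lipschitz with constant $\beta/\lambda$; and the third, by the H\"older bound on $a_\pm$ and the Lipschitz bound on $P_{\beta,\nu}$, is $\leq C\beta\delta|z|^\alpha|y-z|/\lambda^2\leq C\beta\delta\rho^{1+\alpha}/\lambda^2\leq C\delta$, where the hypothesis $\rho^{1+\alpha}\leq c_1/\beta$ enters at the last step. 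Altogether $|u(y)-P_{\beta,\nu}^z(y-z)|\leq C\delta$ on $B_\rho(z)$.

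Changing variables $y=\rho x+z$ and using $|B_\rho|=c_n\rho^n$,
\begin{equation*}
\norm{\frac{u(\rho\cdot+z)}{\rho}-P_{\beta,\nu}^z}_{L^2(B_1)}^2=\rho^{-n-2}\int_{B_\rho(z)}|u(y)-P_{\beta,\nu}^z(y-z)|^2\,dy\leq C\delta^2\rho^{-2},
\end{equation*}
which is the claimed inequality upon taking a square root. The principal technical obstacle will be the broken-harmonic calculation: verifying the distributional identity $\Delta(\bar u-\beta(x\cdot\nu))=\ddiv F$ with $\norm{F}_{L^\infty}\leq C\delta$ and invoking an elliptic $L^\infty$-$L^2$ estimate, both of which rely crucially on the Lipschitz regularity of $u$ supplied by Theorem \ref{theorem:opt reg}.
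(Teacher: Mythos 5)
Your proposal is correct and follows essentially the same route as the paper: pass to $\bar u=a_+(0)u^+-a_-(0)u^-$, write $\Delta(\bar u-\beta(x\cdot\nu))=\ddiv(\sigma\nabla u)$ with $\|\sigma\nabla u\|_{L^\infty}\lesssim\delta$ via the Lipschitz bound of Theorem \ref{theorem:opt reg}, upgrade the $L^2$ flatness to an $L^\infty$ bound on $B_{3/4}$ by the elliptic $L^2$-to-$L^\infty$ estimate, deduce $|z\cdot\nu|\lesssim\delta/\beta$ from $u(z)=0$, and conclude with the same three-term triangle inequality (translation of $P_{\beta,\nu}$ plus the coefficient change from $a_\pm(0)$ to $a_\pm(z)$, where $\rho^{1+\alpha}\leq c_1/\beta$ enters). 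The only cosmetic difference is that you run the final decomposition pointwise before rescaling rather than in the scaled variable.
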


\begin{proof} From \eqref{eq:assump-flat-nbd} and Lemma \ref{lemma:comparison}, we obtain 
\begin{equation}\label{eq:assump-flat-nbd-v}
\norm{v - l}_{L^2(B_{7/8})}\leq \frac{2\delta}{\lambda},
\end{equation}
where $l(x) := \beta(x\cdot\nu)$. As observed in the proof of Lemma \ref{lemma:approximation}, 
\begin{equation*}
\Delta (v - l) = \ddiv(\sigma(x)\nabla u)\quad\text{in $B_{7/8}$},
\end{equation*}
where $\sigma(x) := (a_+(0) - a_+(x))H(u) + (a_-(0) - a_-(x))(1-H(u))$ in $B_1$. Due to the assumption that $\norm{u}_{L^2(B_1)}\leq 1$, we observe from Theorem \ref{theorem:opt reg} that $\norm{\nabla u}_{L^\infty(B_{7/8})}\leq c_1$. In combination with \eqref{eq:assump-coeff-flat-nbd}, we derive that 
\begin{equation}\label{eq:assump-coeff-flat-nbd-sigma}
\norm{\sigma\nabla u}_{L^\infty(B_{7/8})}\leq c_1\delta. 
\end{equation}
Utilizing \eqref{eq:assump-flat-nbd-v} and \eqref{eq:assump-coeff-flat-nbd-sigma}, we deduce from the local boundness of weak solutions (Theorem 8.18 in \cite{GT}) that 
\begin{equation*}
\norm{v-l}_{L^\infty(B_{3/4})}\leq c_2\left(\norm{v-l}_{L^2(B_{7/8})} + \norm{\sigma\nabla u}_{L^\infty(B_{7/8})}\right)\leq c_3\delta.
\end{equation*} 
Making use of Lemma \ref{lemma:comparison} once again, we arrive at 
\begin{equation}\label{eq:assump-flat-nbd-linf}
\norm{u - P_{\beta,\nu}}_{L^\infty(B_{3/4})}\leq c_0\delta.
\end{equation}

From \eqref{eq:assump-flat-nbd-linf}, we deduce that for any $z\in \Gamma(u)\cap B_{3/4}$, 
\begin{equation}\label{eq:flat-nbd-re}
\beta|z\cdot\nu|\max\{a_+(0)^{-1},a_-(0)^{-1}\}\leq c_0\delta.
\end{equation}
Therefore, one may deduce as in Lemma \ref{lemma:comparison} that if $x\in B_1$, then 
\begin{equation}\label{eq:x+z-x}
\left|P_{\beta,\nu}\left(x+\frac{z}{\rho}\right) - P_{\beta,\nu}(x)\right|\leq\frac{3c_0\delta}{\rho}.
\end{equation}

On the other hand, as we restrict $z\in\Gamma(u)\cap B_\rho$, we derive from \eqref{eq:assump-coeff-flat-nbd} that for any $x\in B_1$, 
\begin{equation}\label{eq:coeff-close}
\begin{split}
|P_{\beta,\nu}^z(x) - P_{\beta,\nu}(x)|\leq\frac{\beta\delta\rho^\alpha}{\lambda^2}\leq\frac{c_0\delta}{\rho},
\end{split}
\end{equation}
by choosing $\rho$ sufficiently small such that $\beta\rho^{1+\alpha}/\lambda^2\leq c_0$; here we have used the assumption that $\nu$ is a unit vector to estimate $|x\cdot\nu|\leq 1$. Combining \eqref{eq:x+z-x} and \eqref{eq:coeff-close} together, we arrive at \eqref{eq:flat-nbd} by means of the triangle inequality; we omit the details. 
\end{proof}

We are now in position to prove Theorem \ref{theorem:C1a-fb}. 

\begin{proof}[Proof of Theorem \ref{theorem:C1a-fb}] For definiteness, let $\omega$ in \eqref{eq:cont} to be   $\omega(r)=\omega_0 r^\alpha$ with $\omega_0>0$ and $0<\alpha<1$. 

From Lemma \ref{lemma:flat-ae}, for $\mu$-a.e. $z\in\Gamma(u)$, there is a sequence $r_j\ra 0$ such that $u_{z,r_j}\ra P_{\beta,\nu}$ in $L_{loc}^2(B_1)$ for some $\beta>0$ and a unit vector $\nu\in\R^n$. 

Now fix $\delta>0$ and we take a sufficiently large $j_0$ such that 
\begin{equation*}
\norm{u_{z,r_j} - P_{\beta,\nu}}_{L^2(B_{7/8})}\leq \delta,\quad\forall j\geq j_\delta. 
\end{equation*}
By definition, for any $j=1,2,\cdots$, $\norm{u_{z,r_j}}_{L^2(B_1)} = 1$ and $u_{z,r_j}$ solves
\begin{equation*}
\ddiv(A(r_jx+z,u_{z,r_j})\nabla u_{z,r_j}) = 0\quad\text{in }B_1.
\end{equation*}
Since $\omega$ is H\"{o}lder, and thus, Dini continuous, we may invoke Theorem \ref{theorem:opt reg} and assert that 
\begin{equation}\label{eq:uzrj-L}
\norm{\nabla u_{z,r_j}}_{L^\infty(B_{7/8})}\leq L,
\end{equation}
where $L>0$ depends only on $n$, $\lambda$, $\omega_0$ and $\alpha$. It is also clear that 
\begin{equation}\label{eq:apm-rj}
|a_\pm(r_jx + z) - a_\pm(z)|\leq \omega_0r_j^\alpha|x|^\alpha\quad\text{in }B_1.
\end{equation}

Select an integer $k_\delta$ larger than $j_\delta$ such that 
\begin{equation}\label{eq:omega 0 rj1}
\omega_0 r_{k_\delta}^\alpha\leq \delta.
\end{equation}
Then applying Lemma \ref{lemma:flat-nbd} to $u_{z,r_j}$ and $a_{\pm}(r_j\cdot + z)$, we obtain $0<\rho<\frac{3}{8}$ with $\rho\leq \frac{c_1}{\beta}$, such that for any $\xi\in B_\rho(z)\cap\Gamma(u)$, $B_\rho(\xi)\subset B_{3/4}$ and 
\begin{equation}\label{eq:xi-rho}
\norm{\frac{u_{z,r_{k_\delta}}(\rho \cdot+\xi)}{\rho} - P_{\beta,\nu}^\xi}_{L^2(B_1)}\leq\frac{c_2\delta}{\rho}.
\end{equation}

Now let us fix $\xi\in B_\rho(z)\cap \Gamma(u)$, and for notational convenience, define a function $\tilde{u}$ on $B_1$ by 
\begin{equation*}
\tilde{u}(x) := \frac{u_{z,r_j}(\rho x +\xi)}{\rho L},
\end{equation*}
where $L$ is chosen from \eqref{eq:uzrj-L}, so that we have 
\begin{equation}\label{eq:tilde u-lip}
\norm{\nabla \tilde{u}}_{L^\infty(B_1)}\leq \frac{1}{L}\norm{u_{z,r_{k_\delta}}}_{L^2(B_\rho(\xi))}\leq 1. 
\end{equation}
Observe that $\tilde{u}$ is a weak solution to 
\begin{equation*}
\ddiv(\tilde{A}(x,\tilde{u})\nabla\tilde{u})=0\quad\text{in }B_1, 
\end{equation*}
where $\tilde{A}(x,\tilde{u}) =\tilde{a}_+(x) H(\tilde{u}) + \tilde{a}_-(x)(1-H(\tilde{u}))$ with 
\begin{equation*}
\tilde{a}_\pm(x) := a_\pm(r_{k_\delta}\rho x + r_{k_\delta}\xi + z)\quad\text{in }B_1. 
\end{equation*}
Therefore, it follows from \eqref{eq:omega 0 rj1} that 
\begin{equation}\label{eq:tilde a-cont}
|\tilde{a}_\pm(x) - \tilde{a}_\pm(0)|\leq \omega_0r_{k_\delta}^\alpha\rho^\alpha|x|^\alpha\leq\delta\rho^\alpha|x|^\alpha\quad\text{in }B_1. 
\end{equation}
Moreover, in view of \eqref{eq:xi-rho}, we have that
\begin{equation}\label{eq:tilde u-flat}
\norm{\tilde{u}- P_{\frac{\beta}{L},\nu}}_{L^2(B_1)}\leq \frac{c_2\delta}{\rho L}.
\end{equation}

Owing to the inequalities in \eqref{eq:tilde u-lip}, \eqref{eq:tilde a-cont} and \eqref{eq:tilde u-flat}, $\tilde{u}$ and $\tilde{a}_\pm$ fall under the situation of Lemma \ref{lemma:approximation}; more specifically, we replace $\beta$ and $\e$ in \eqref{eq:assump-flat} with $\beta/L$ and respectively $\frac{c_2\delta}{\rho L}$, and make a further restriction on $\rho$ such that $\rho^{1+\alpha}\leq \frac{c_2\eta}{L}$ as well, where $\eta$ is the constant in \eqref{eq:assump-coeff}. Hence, we deduce from Corollary \ref{corollary:approximation} that there exists a nonzero vector $e^\xi\in\R^n$ such that 
\begin{equation*}
\Gamma(\tilde{u})\cap B_1 \subset\left\{x\in B_1: |x\cdot e^\xi|\leq \frac{c_3\e L}{\beta}|x|^{1+\alpha}\right\}.
\end{equation*}
In terms of $u_{z,r_{k_\delta}}$, we see that for any $\xi\in \Gamma(u_{z,r_{k_\delta}})\cap B_\rho$, 
\begin{equation*}
\Gamma(u_{z,r_{k_\delta}})\cap B_\rho(\xi)\subset \left\{x\in B_\rho(\xi): |(x-\xi)\cdot e^\xi|\leq \frac{c_4\delta}{\rho^{1+\alpha} \beta}|x-\xi|^{1+\alpha}\right\}.
\end{equation*}
Since the constant $c_4\delta/(\rho^{1+\alpha}\beta)$ is independent on $\xi$, we conclude that $\Gamma(u_{z,r_j})\cap B_\rho$, and thus, $\Gamma(u)\cap B_{r_{k_\delta}\rho}(z)$ is a $C^{1,\alpha}$ graph, proving the theorem. 
\end{proof}

Now we are left with proving Theorem \ref{theorem:Hn-1}. Before we begin, let us make an important observation on the nondegenerate part $\cN(u)$ of the free boundary, thanks to the monotonicity of the ACF formula. 

\begin{lemma}\label{lemma:N} $z\in\cN(u)$ if and only if there exist (small) positive numbers $\eta$ and $r_0$, both depending on $z$, such that 
\begin{equation*}
\frac{1}{r^{n/2}}\norm{u}_{L^2(B_r(z))}\geq \eta r\quad\text{for any }0<r\leq r_0.
\end{equation*}
\end{lemma}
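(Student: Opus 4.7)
The ``if'' direction is immediate, since $\phi(r):=r^{-n/2-1}\norm{u}_{L^2(B_r(z))}\geq \eta$ for all small $r$ forces $\limsup_{r\to 0}\phi(r)\geq \eta>0$. For the ``only if'' direction, suppose $z\in\cN(u)$, set $\eta_0:=\tfrac12\limsup_{r\to 0}\phi(r)>0$, and argue by contradiction: assume there is a sequence $r_k\searrow 0$ with $\phi(r_k)\to 0$. The plan is to blow up along two sequences (one realizing the limsup $\eta_0$, the other the vanishing $r_k$) and use the monotonicity of the ACF formula (Theorem \ref{theorem:acf}) to force the limit $\lim_{r\to 0}\Phi(r,z,u^+,u^-)$ to take two incompatible values.

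First, pick also $r_j\searrow 0$ with $\phi(r_j)\geq \eta_0$ and consider $v_j(x):=u(r_jx+z)/r_j$ and $\tilde v_k(x):=u(r_kx+z)/r_k$. By Theorem \ref{theorem:opt reg} both sequences are uniformly Lipschitz on any fixed compact and vanish at the origin, so after passing to subsequences $v_j\to v$ and $\tilde v_k\to \tilde v$ locally uniformly and weakly in $W_{loc}^{1,2}(\R^n)$. Following the limiting procedure from the proof of Lemma \ref{lemma:Ca}, both limits solve $\ddiv(A_z(\cdot)\D\cdot)=0$ in $\R^n$; hence $a_+(z)v^+-a_-(z)v^-$ and $a_+(z)\tilde v^+-a_-(z)\tilde v^-$ are harmonic on $\R^n$ with linear growth, and Liouville forces $v=P_{\beta,\nu}^z$, $\tilde v=P_{\tilde\beta,\tilde\nu}^z$. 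Since $\norm{P_{\beta,\nu}^z}_{L^2(B_1)}=c_z\beta$ for an explicit $c_z>0$, $L^2(B_1)$-convergence gives $c_z\beta=\lim_j\phi(r_j)\geq \eta_0$ and $c_z\tilde\beta=\lim_k\phi(r_k)=0$, so $\beta\geq c_z^{-1}\eta_0>0$ while $\tilde v\equiv 0$.

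To transfer this information to $\Phi$, the next step is to upgrade these weak convergences to strong $W_{loc}^{1,2}$. Testing the equation for $v_j$ against $\varphi^2(v_j-v)$ for $\varphi\in C_c^\infty(B_1)$ (and $\tilde v_k$ against $\varphi^2\tilde v_k$), and using the pointwise convergence $A(r_jx+z,v_j(x))\to A_z(v(x))$ on $\{v\neq 0\}$ (a set of full measure since $v$ is a non-trivial two-plane solution) together with the strong $L^2_{loc}$ convergence already in hand, a standard Caccioppoli-type identity yields $\int A_j|\D(v_j-v)|^2\varphi^2\to 0$ and $\int|\D\tilde v_k|^2\varphi^2\to 0$. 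Since $|\D v_j|^2|x|^{2-n}\leq L^2|x|^{2-n}\in L^1(B_1)$ and the gradients converge a.e.\ along a further subsequence, dominated convergence propagates the strong convergence through the ACF integrand. By the exact scale invariance $\Phi(r,z,u^+,u^-)=\Phi(1,0,u_{z,r}^+,u_{z,r}^-)$, where $u_{z,r}(x):=u(rx+z)/r$, and the explicit computation $\Phi(1,0,P_{\beta,\nu}^z)=c(n)\beta^4/(a_+(z)a_-(z))^2$, one gets $\Phi(r_j,z,u^+,u^-)\to c'\beta^4\geq c''\eta_0^4>0$ and $\Phi(r_k,z,u^+,u^-)\to 0$. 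But monotonicity of $\Phi$ in $r$ forces $\lim_{r\to 0}\Phi(r,z,u^+,u^-)$ to exist as a single real number, a contradiction.

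The main obstacle is securing the \emph{strong} (not merely weak) $W_{loc}^{1,2}$ convergence of the two blowup sequences, because weak convergence alone gives only lower semi-continuity of $\Phi$, in the wrong direction to rule out the vanishing sequence $r_k$. The saving feature is that the singular weight $|x|^{2-n}$ is integrable on $B_1$, so once strong $L^2_{loc}$ convergence of the gradients is secured by the Caccioppoli identity, dominated convergence closes the loop; apart from this technical step, the argument is a routine combination of blowup analysis, Liouville, and the ACF monotonicity of Theorem \ref{theorem:acf}.
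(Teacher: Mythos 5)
Your proof is correct, but the route you take for the ``only if'' direction is genuinely different from, and heavier than, the paper's. You classify both blowup limits (a nontrivial two-plane solution along the sequence realizing the limsup, zero along the vanishing sequence), which forces you to upgrade weak to strong $W^{1,2}_{loc}$ convergence via the Caccioppoli identity and then push the singular weight $|x|^{2-n}$ through the limit; this is exactly the delicate step you flag, and your handling of it (a.e.\ convergence of gradients along a subsequence plus the uniform Lipschitz bound dominating the weighted integrand -- note a plain Cauchy--Schwarz would fail for $n\geq 4$) does close it. The paper avoids all of this by working quantitatively at each finite scale: along the nondegenerate sequence, the Harnack argument of Lemma \ref{lemma:gamma} shows both phases of the rescaled solution $u_j$ are uniformly nontrivial, and Lipschitz regularity plus the Poincar\'{e} inequality give $\norm{\nabla u_j^\pm}_{L^2(B_1)}\geq c\eta$; since $|x|^{2-n}\geq 1$ on $B_1$ this yields $\Phi(1,u_j^+,u_j^-)\geq c\eta^4$ for every $j$ with no limit identification at all, while along the vanishing sequence Caccioppoli alone gives $\Phi\to 0$ (which is also all you need there --- your Liouville step for $\tilde v$ is superfluous). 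Both arguments then contradict the existence of $\Phi(0+)$ guaranteed by the monotonicity formula. What your longer route buys is the classification of blowup limits at nondegenerate points, which the paper needs anyway in the proof of Theorem \ref{theorem:Hn-1}; what the paper's route buys is brevity and immunity to the weighted-integrand issue. Two minor points: in the scalar case Proposition \ref{proposition:acf-monot} suffices in place of Theorem \ref{theorem:acf}, and the hypothesis $L_\pm u^\pm\geq 0$ required by the monotonicity formula should be cited from Lemmas \ref{lemma:pos-msr} and \ref{lemma:same-msr}.
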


\begin{proof} The `if' part is obvious by the definition of $\cN(u)$, whence we prove the `only if' part. 

Without loss of any generality, let us assume that $\norm{u}_{L^2(\Omega)}=1$ and suppose that the `only if' part fails to hold. Then there is $z\in\cN(u)$ which admits two sequences $r_j,t_j\searrow 0$ such that $r_j^{-n/2}\norm{u}_{L^2(B_{r_j/2}(z))}\geq \eta r_j$ for some $\eta>0$ while $t_j^{-n/2}\norm{u}_{L^2(B_{t_j}(z))}\leq \frac{1}{j}t_j$ for $j=1,2,\cdots$.

Define $u_j(x):=u(r_jx+z)/r_j$ and $v_j(x):=u(t_jx+z)/t_j$, in which case we have $\norm{u_j}_{L^2(B_1)}\geq \eta$. Without loss of any generality let us assume that $\sup_{B_1}u_j\geq c_1\eta$. As shown in the proof of Lemma \ref{lemma:gamma}, it follows that $\inf_{B_2}u_j\leq -c_2\eta$. The uniform Lipschitz regularity (Theorem \ref{theorem:opt reg}) implies that there are some balls $B_j^\pm\subset B_1$ such that $u_j\geq \frac{c_1\eta}{2}$ in $B_j^+$ and $u_j\leq -\frac{c_2\eta}{2}$ in $B_j^-$ with $\cL^n(B_j^\pm)\geq\delta>0$. Thus, the Poincar\'{e} inequality yields that
\begin{equation*}
c_{1,\delta}\eta\leq\norm{u_j^\pm}_{L^2(B_1)}\leq c_{2,\delta}\norm{\nabla u_j^\pm}_{L^2(B_1)}. 
\end{equation*}
By the monotonicity of the ACF formula (Proposition \ref{proposition:acf-monot}), 
\begin{equation*}
\Phi(0+,z,u^+,u^-)=\lim_{j\ra\infty}\Phi(r_j,z,u^+,u^-)=\lim_{j\ra\infty}\Phi(1,u_j^+,u_j^-)\geq c_\delta\eta^4;
\end{equation*}
see \eqref{eq:acf} for the definition of $\Phi$. 

On the other hand, $\norm{v_j}_{L^2(B_1)}\leq\frac{1}{j}$ implies that $\norm{\nabla v_j}_{W^{1,2}(B_{1/2})}\leq\frac{c_3}{j}$, and thus, $v_j\ra 0$ strongly in $W^{1,2}(B_{1/2})$. Then
\begin{equation*}
\Phi(0+,z,u^+,u^-)=\lim_{j\ra\infty}\Phi(t_j/2,z,u^+,u^-)=\lim_{j\ra\infty}\Phi(1/2,v_j^+,v_j^-)=0,
\end{equation*}
a contradiction. 
\end{proof}

\begin{remark}\label{remark:N} We may only assume Dini continuity on $a_+$ and $a_-$ to have Lemma \ref{lemma:N}, since its proof only involves the monotonicity of the ACF formula. 
\end{remark}

By Lemma \ref{lemma:N}, we may decompose the class $\cN(u)$ into a countable union of $\cN_{j,k}(u)$ for $j,k=1,2,\cdots$, where $z\in\cN_{j,k}(u)$ if 
\begin{equation*}
\frac{1}{r^{n/2}}\norm{u}_{L^2(B_r(z))}\geq\frac{1}{j}r\quad\text{for any }0<r\leq\frac{1}{k}.
\end{equation*}
Note that for any $z\in\cN_{j,k}(u)$, we have $\dist(z,\p\Omega)\geq\frac{1}{k}$. 

\begin{lemma}\label{lemma:N-hauss} For any pair $(j,k)$ of positive integers, 
\begin{equation*}
\cH^{n-1}(\cN_{j,k}(u))\leq C_{j,k}\norm{u}_{L^2(\Omega)}.
\end{equation*} 
\end{lemma}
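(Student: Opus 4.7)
The plan is to establish a uniform density lower bound
\begin{equation*}
\mu(B_r(z)) \geq c_{j,k}\, r^{n-1}, \qquad z \in \cN_{j,k}(u),\ 0 < r \leq r_0(j,k),
\end{equation*}
and then combine it with Vitali's covering theorem and Lemma \ref{lemma:pos-msr} to conclude.

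For the density lower bound I proceed by a blowup and compactness argument based on the linear rescaling $\tilde u_{z,r}(x) := u(rx+z)/r$. Since $\dist(z, \p\Omega) \geq 1/k$ for $z \in \cN_{j,k}(u)$, Theorem \ref{theorem:opt reg} furnishes a uniform Lipschitz bound $\norm{\nabla \tilde u_{z,r}}_{L^\infty(B_1)} \leq L$ for all $r \leq 1/(4k)$ (with $L$ depending on $n,\lambda,\omega,k$ and $\norm{u}_{L^2(\Omega)}$), while the nondegeneracy defining $\cN_{j,k}(u)$ gives $\norm{\tilde u_{z,r}}_{L^2(B_1)} \geq 1/j$. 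A change of variables identifies $\mu_{\tilde u_{z,r}}(B_1) = r^{1-n} \mu(B_r(z))$, so the density bound is equivalent to the uniform estimate $\mu_{\tilde u_{z,r}}(B_1) \geq c_{j,k}$. Supposing the latter fails, one extracts $z_i \in \cN_{j,k}(u)$ and $r_i \searrow 0$ with $\mu_{\tilde u_i}(B_1) \ra 0$, where $\tilde u_i := \tilde u_{z_i,r_i}$. After passing to subsequences, $z_i \ra z^* \in \bar\Omega$ and $\tilde u_i \ra v$ uniformly in $B_1$ (Arzel\`a--Ascoli) and weakly in $W^{1,2}(B_1)$, with $v$ a weak solution of $\ddiv(A(z^*,v)\nabla v) = 0$ satisfying $v(0) = 0$ and $\norm{v}_{L^2(B_1)} \geq 1/j$. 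The uniform $W^{1,2}$ bound on $\tilde u_i^+$ together with strong $L^2$ convergence forces $\nabla \tilde u_i^+ \rightharpoonup \nabla v^+$ weakly in $L^2(B_1)$, whence $\mu_{\tilde u_i} \ra \mu_v = a_+(z^*)\Delta v^+$ in the Radon sense. Lower semicontinuity on the open set $B_1$ gives $\mu_v(B_1) = 0$, so $v^+$ is a nonnegative harmonic function in $B_1$ with $v^+(0) = 0$, forcing $v^+ \equiv 0$ by the strong maximum principle. Lemma \ref{lemma:same-msr} yields the analogous conclusion $v^- \equiv 0$, hence $v \equiv 0$ in $B_1$, contradicting $\norm{v}_{L^2(B_1)} \geq 1/j$.

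With the density bound in hand, for any $0 < \rho \leq r_0$ Vitali's covering theorem applied to $\{B_\rho(z)\}_{z \in \cN_{j,k}(u)}$ yields a countable disjoint subfamily $\{B_\rho(z_\ell)\}$ such that $\{B_{5\rho}(z_\ell)\}$ still covers $\cN_{j,k}(u)$. Shrinking $r_0$ if necessary so that $r_0 \leq 1/(2k)$, the chosen balls lie in the compact set $\Omega_k := \{z \in \Omega : \dist(z, \p\Omega) \geq 1/(2k)\}$, and summing the density estimate over the disjoint family yields $\sum_\ell \rho^{n-1} \leq c_{j,k}^{-1} \mu(\Omega_k)$. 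A finite cover of $\Omega_k$ by balls of radius $1/(4k)$ combined with Lemma \ref{lemma:pos-msr} produces $\mu(\Omega_k) \leq C_{k,\Omega} \norm{u}_{L^2(\Omega)}$, so the $(n-1)$-dimensional Hausdorff premeasure of $\cN_{j,k}(u)$ at scale $10\rho$ is controlled uniformly; letting $\rho \searrow 0$ delivers the claim.

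The principal difficulty is the density lower bound, and within it the Radon-measure convergence $\mu_{\tilde u_i} \ra \mu_v$. This hinges on upgrading the weak $W^{1,2}$ convergence of $\tilde u_i$ to weak $L^2$ convergence of $\nabla \tilde u_i^+$, which in turn rests on the uniform $W^{1,2}$ bound for $\tilde u_i^+$ and the uniqueness of the weak limit forced by strong $L^2$ convergence. Once this step is secured, the strong maximum principle disposes of $v$ immediately.
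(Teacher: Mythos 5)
Your proof is correct, but it takes a genuinely different route from the paper's. The paper argues directly and quantitatively, without any blowup: testing the equation with $\psi_\eta(u^+)\phi$ and using the sign of $\mu$ (Lemma \ref{lemma:pos-msr}) gives the level-set energy estimate $\int_{\{|u|<\eta\}\cap D}|\nabla u|^2\leq Ck^2\eta$; then a bounded-overlap cover of $\cN_{j,k}(u)$ by balls $B_i$ of radius $\e$ is controlled by noting that nondegeneracy plus the Lipschitz bound produce sub-balls of $B_i$ on which $u^\pm\geq\e/(2j)$, so the Poincar\'e inequality gives $\cL^n(B_i)\leq C_{j,k}\int_{B_i}|\nabla u|^2$, and since $B_i\subset\{|u|\leq ck\e\}\cap D$ the energy estimate bounds $\sum_i \e^{n}$ by $C_{j,k}\e$, i.e.\ $\sum_i\diam(B_i)^{n-1}\leq C_{j,k}$. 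You instead establish the lower density bound $\mu(B_r(z))\geq c_{j,k}r^{n-1}$ on $\cN_{j,k}(u)$ by compactness and finish with Vitali and the total-mass bound for $\mu$. Your compactness step is sound: the scaling identity for $\mu$, the identification of the limit equation, the Radon convergence $\mu_{\tilde u_i}\ra\mu_v$, and the dichotomy that $\mu_v(B_1)=0$ forces $v^+$, and via Lemma \ref{lemma:same-msr} also $v^-$, to be nonnegative harmonic with an interior zero, hence $v\equiv 0$, all mirror mechanisms already used in Lemma \ref{lemma:dist}, and the contradiction with $\norm{v}_{L^2(B_1)}\geq 1/j$ is legitimate. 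What your route buys is the stronger intermediate fact that $\mu$ has positive lower $(n-1)$-density on the nondegenerate set, so that the restriction of $\cH^{n-1}$ to $\cN_{j,k}(u)$ is dominated by $C_{j,k}\,\mu$; what it costs is that the constant produced by the contradiction argument depends on the fixed solution $u$, not only on $j$, $k$ and the universal data, so your final $C_{j,k}$ is $u$-dependent. That is harmless for the $\sigma$-finiteness claim in Theorem \ref{theorem:Hn-1}, but to obtain the dependence as stated you would either need to run the compactness argument over the whole class of normalized solutions or fall back on the paper's direct energy argument.
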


\begin{proof} For simplicity let us assume that $\norm{u}_{L^2(\Omega)}=1$. Fix a pair $(j,k)$ of positive integers. Let us take a compact subset $D$ of $\Omega$ such that $\frac{1}{4k}<\dist(D,\p\Omega)<\frac{1}{2k}$. 

Select a smooth cutoff function $\phi$ on $\Omega$ such that $\phi\equiv 1$ in $D$, $\dist(\spt(\phi),\p\Omega)\geq\frac{1}{4k}$ and $|\nabla \phi|\leq Ck$. Given $\eta>0$, consider a function $\psi_\eta$ on $\R$ defined by $\psi_\eta(t) = 1$ if $t\geq \eta$, $\psi_\eta(t) = \eta^{-1}t$ if $0\leq t< \e$ and $\psi_\eta(t) = 0$ if $t<0$. By Lemma \ref{lemma:pos-msr}, we derive that 
\begin{equation}\label{eq:weak-N-hauss}
\int a_+\nabla u^+\nabla (\psi_\eta(u^+)\phi) \leq 0,
\end{equation}
by using $\psi_\eta(u^+)\phi$ as a test function. 

Noting that  
\begin{equation*}
\int a_+\nabla u^+\nabla(\phi_\eta(u^+)\phi) = \eta^{-1}\int_{\{0<u^+<\eta\}}a^+|\nabla u^+|^2\phi + \int \psi_\eta(u^+)a_+\nabla u^+\nabla \phi,
\end{equation*}
and using \eqref{eq:unif ellip}, we may derive from \eqref{eq:weak-N-hauss} that
\begin{equation*}
\begin{split}
\lambda\eta^{-1}\int_{\{0<u^+<\eta\}}|\nabla u^+|^2\phi &\leq \lambda^{-1}\int |\nabla u^+||\nabla\phi|.
\end{split}
\end{equation*}
Thus, from Theorem \ref{theorem:opt reg} and the construction of $\phi$, it follows that
\begin{equation*}
\int_{\{0<u^+<\eta\}\cap D}|\nabla u^+|^2 \leq ck^2\eta.
\end{equation*}
One may also notice that the above argument can also be applied to $u^-$, whence we conclude that
\begin{equation}\label{eq:grad u-N-hauss}
\int_{\{|u|<\eta\}\cap D}|\nabla u|^2 \leq 2ck^2\eta.
\end{equation}

Now let us take a countable open cover $\{B_i\}_{i=1}^\infty$ of $\cN_{j,k}(u)$, where for each $i$, $B_i$ is a ball of radius $\e$ with its center at $\cN_{j,k}(u)$. By the Vitali covering lemma, we may also choose this open cover to have finite overlapping times, whose finiteness, say $N$, depends only on the dimension $n$. Moreover, using the nondegeneracy of $u$ on $\cN_{j,k}(u)$ and the local Lipschitz regularity in Theorem \ref{theorem:opt reg}, it is not hard to observe that each $B_i$ contains subballs $B_i^{\pm}\subset\Omega^\pm(u)$ with radius $c_{j,k}\e$ where $u^\pm\geq\frac{\e}{2j}$. Owing to this fact, we are able to use the Poincar\'{e} inequality to deduce that 
\begin{equation}\label{eq:poin-N-hauss}
c_{j,k}\e^2\cL^n(B_i)\leq \int_{B_i}(u^\pm)^2\leq \tilde{c}_{j,k}\e^2\int_{B_i}|\nabla u^\pm|^2
\end{equation}

Now let us take $\e\leq\frac{1}{2k}$. Then $\dist(B_i,\p\Omega)\geq\frac{1}{2k}$, since we know that $\dist(\cN_{j,k}(u),\p\Omega)\geq\frac{1}{k}$. Because of Theorem \ref{theorem:opt reg}, we deduce that $B_i\subset\{|u|\leq ck\e\}\cap D$, where $D$ is the compact set chosen in the beginning of this proof. Hence, inserting $\eta=ck\e$ in \eqref{eq:grad u-N-hauss}, we derive from \eqref{eq:poin-N-hauss} that
\begin{equation*}
\sum_{i=1}^\infty\cL^n(B_i)\leq \hat{c}_{j,k}\sum_{i=1}^\infty\int_{B_i}|\nabla u|^2\leq\hat{c}_{j,k}N\int_{\{|u|\leq ck\e\}\cap D}|\nabla u|^2\leq 2c\hat{c}_{j,k}Nk^3\e.
\end{equation*}
Therefore, 
\begin{equation*}
\sum_{i=1}^\infty\diam(B_i)^{n-1}\leq C_{j,k},
\end{equation*}
and the conclusion of this lemma follows from the arbitrary choice of $\e\leq\frac{1}{2k}$. 
\end{proof}

\begin{remark}\label{remark:N-hauss} As in the proof of Lemma \ref{lemma:N}, that of Lemma \ref{lemma:N-hauss} only requires Dini continuity of $a_+$ and $a_-$, since the local Lipschitz regularity of $u$ in Theorem \ref{theorem:opt reg} played an essential role.  
\end{remark}

We are now ready to prove Theorem \ref{theorem:Hn-1}.

\begin{proof}[Proof of Theorem \ref{theorem:Hn-1}] Suppose towards a contradiction that $\mu(\cS(u))>0$. Then by Theorem \ref{theorem:C1a-fb}, we can find $z\in\cS(u)$ such that $B_r(z)\cap\Gamma(u)$ is a $C^{1,\alpha}$ graph for some $r>0$. In particular, from the proof of Theorem \ref{theorem:C1a-fb} and Corollary \ref{corollary:approximation}, we observe that for a sufficiently small $r_0$, we have 
\begin{equation*}
\frac{1}{\rho}u_{z,r_0}(\rho \cdot) \ra P_{\beta,e}\quad\text{strongly in $L^2(B_1)$ as $\rho\ra 0$},
\end{equation*}
for some $\beta>0$ and nonzero vector $e\in\R^n$. Therefore, 
\begin{equation*}
\lim_{\rho\ra 0}\frac{\norm{u}_{L^2(B_{\rho r_0}(z))}}{(\rho r_0)^{\frac{n}{2}+1}} = \frac{\norm{u}_{L^2(B_{r_0}(z))}}{r_0^{\frac{n}{2}+1}}\norm{P_{\beta,e}}_{L^2(B_1)}>0,
\end{equation*}
a contradiction against the assumption that $z\in\cS(u)$.

Thus, $\mu(\cS(u))=0$. Then it follows from $\cN(u)=\Gamma(u)\setminus\cS(u)$ that $\cN(u)$ has full $\mu$-measure. Due to Lemma \ref{lemma:N-hauss}, we know that $\cN(u)$ has $\sigma$-finite $(n-1)$-dimensional Hausdorff measure, and so does $\spt(\mu)$.

Finally, let us prove that $\cN(u)$ is relatively open in $\Gamma(u)$. Fix $z\in\cN(u)$. Then $Blo(u,z)$ contains a two plane solution, say $P_{\beta,\nu}$, for some $\beta>0$ and a unit vector $\nu\in\R^n$, due to the uniform Lipschitz regularity (Theorem \ref{theorem:opt reg}) and the nondegeneracy of $z$. Hence, we may go through the proof of Theorem \ref{theorem:C1a-fb} and conclude that $B_r(z)\cap \Gamma(u)$ is a $C^{1,\alpha}$ graph. Then we apply the Hopf lemma to \eqref{eq:main} in $B_r(z)\cap\Omega^\pm(u)$, and deduce that for all $\xi\in B_r(z)\cap\Gamma(u)$, $\frac{\p u^\pm}{\p \nu^\pm}(\xi)>0$, where $\nu^\pm$ is the inward unit normal to $\Omega^\pm(u)$ at $\xi$. Clearly, $\xi\in\cN(u)$, which proves that $B_r(z)\cap\Gamma(u)\subset \cN(u)$, as desired.  
\end{proof}

%
%

\section{Analysis on Matrix Coefficient Cases}\label{section:matrix}

Here we shall  extend our main results to (a special type of) matrix coefficient cases. Let $a_+$ and $a_-$ be  functions on $\R^n$ satisfying \eqref{eq:unif ellip} and \eqref{eq:cont}. Under this situation, consider a symmetric $(n\times n)$-matrix valued mapping $P$ on $\R^n$ satisfying
\begin{equation}\label{eq:ellip-matrix}
\lambda I\leq P(x)\leq \frac{1}{\lambda}I,\quad\forall x\in\R^n,
\end{equation}
and
\begin{equation}\label{eq:cont-matrix}
\norm{P(x)-P(y)}\leq \omega(|x-y|),\quad\forall x,y\in\R^n,
\end{equation}
where $\lambda$ and $\omega$ are the same quantities appearing in \eqref{eq:unif ellip} and respectively \eqref{eq:cont}. Define symmetric $(n\times n)$-matrix valued mappings $A_+$ and $A_-$ on $\R^n$ by 
\begin{equation}\label{eq:A_pm}
A_+(x) := a_+(x)P(x)\quad\text{and}\quad A_-(x):=a_-(x)P(x),
\end{equation}
and then a function $A:\R^n\times\R\ra R$ by
\begin{equation}\label{eq:matrix}
A(x,s):= A_+(x)H(s) + A_-(x)(1-H(s)),
\end{equation}
where $H$ is the Heaviside function. Set $\Omega$ to be a bounded domain in $\R^n$ and let $u$ be a weak solution to
\begin{equation}\tag{$P$}\label{eq:main-matrix}
\ddiv(A(x,u)\nabla u)= 0\quad\text{in }\Omega.
\end{equation}

\begin{remark}\label{remark:matrix} Alternatively, we may consider uniformly elliptic Dini continuous matrices $A_+$ and $A_-$ such that $A_+(x) = f(x)A_-(x)$ for a Dini continuous real valued $f$. 
\end{remark}

The existence of weak solutions to \eqref{eq:main-matrix} can be proved by the same argument in the proof of Proposition \ref{proposition:exist}, since there we only use the interior energy estimate for weak solutions to elliptic PDE with bounded measurable coefficients. 

It is noteworthy that the limiting equation of \eqref{eq:main-matrix} at a point $z\in\Omega$ is 
\begin{equation*} 
\ddiv((A_-(z) + (A_+(z) - A_-(z))H(v))\nabla v)=0,
\end{equation*}
whence the function $w$, defined by $w(x):=a_+(z)v^+(x) - a_-(z)v^-(x)$, solves 
\begin{equation*}
\ddiv(P(z)\nabla w)=0.
\end{equation*}
That is, $w$ is a harmonic function up to a bilinear transformation. Hence, the arguments throughout Section \ref{section:regularity} -- \ref{section:higher reg} are expected to go through with weak solutions to \eqref{eq:main-matrix} as well. 

The first result in concern with weak solutions to \eqref{eq:main-matrix} is the interior Lipschitz regularity of the associated weak solutions. 

\begin{theorem}\label{theorem:opt reg-matrix} Assume that $a_\pm$ and $P$ satisfy \eqref{eq:cont} and respectively \eqref{eq:cont-matrix} with a Dini continuous $\omega$ and let $u$ be a bounded weak solution to \eqref{eq:main-matrix} in $\Omega$. Then $u\in W_{loc}^{1,\infty}(\Omega)$ and for any $D\Subset\Omega$, 
\begin{equation*}
\norm{\nabla u}_{L^\infty(D)}\leq \frac{C\norm{u}_{L^2(\Omega)}}{d^{\frac{n}{2}+1}},
\end{equation*}
where $C$ is a constant depending only on $n$, $\lambda$ and $\omega$ and $d=\dist(D,\p\Omega)$.
\end{theorem}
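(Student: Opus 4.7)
The plan is to adapt the proof of Theorem \ref{theorem:opt reg} verbatim, exploiting the fact that at any frozen point $z$ both phases of the matrix operator share the common factor $P(z)$. Concretely, if $v$ is a weak solution of $\ddiv(A(z,v)\nabla v) = 0$ on $\R^n$ with $A(z,\cdot)$ as in \eqref{eq:matrix}--\eqref{eq:A_pm}, then the function
\[
w(x) := a_+(z)v^+(x) - a_-(z)v^-(x)
\]
satisfies $\ddiv(P(z)\nabla w) = 0$, so the linear change of variables $y = P(z)^{1/2}x$ turns $w$ into a genuinely harmonic function on $\R^n$. This is the analogue of the ``broken harmonic'' structure exploited in the scalar case and is what makes the arguments of Section \ref{section:regularity} transfer.

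With this in hand, the matrix analogue of Lemma \ref{lemma:Ca} (H\"{o}lder regularity for every $\alpha < 1$) is immediate: I would run the same blowup contradiction, extract a limit $v$ with $\sup_{B_1}|v| = 1$ and $|v(x)| \le |x|^\alpha$ for $|x| \ge 1$, transform $w$ into a harmonic function with polynomial growth of order $\alpha < 1$, apply Liouville to conclude $w$ is constant, use $v(0) = 0$ to get $w \equiv 0$, and invoke uniform ellipticity $\lambda|v| \le |w|$ to contradict the normalisation. The matrix analogue of Lemma \ref{lemma:lip} runs along the same lines, but this is where the essential work lies: I would apply the generalised ACF monotonicity formula of Theorem \ref{theorem:acf}, which is precisely designed to handle two operators ``whose associated coordinate systems can be simultaneously rotated into concentric balls.'' The factorisation $A_\pm = a_\pm P$ with a common matrix $P$ places the operators $\ddiv(a_\pm P\nabla\cdot)$ in exactly that regime, since the single change of variables $y = P(z)^{1/2}x$ simultaneously rectifies both phases. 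The monotonicity then gives $\Phi(R,v^+,v^-) = 0$ for every $R \ge 1$, forcing $\nabla v^+$ or $\nabla v^-$ to vanish a.e.; the unique continuation inherited from the harmonic $\tilde w(y) := w(P(z)^{-1/2}y)$, combined with the maximum principle applied to the remaining phase (which then satisfies a constant-coefficient elliptic equation), forces $v \equiv 0$, contradicting $\sup_{B_1}|v| = 1$.

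Once these two pointwise estimates are established, the passage to the interior gradient bound is identical to the end of the proof of Theorem \ref{theorem:opt reg}. At $z \in D$ with $u(z) \ne 0$, set $r = \dist(z, \{u=0\} \cap D)$; if $r \gtrsim d$, standard $C^1$ theory for divergence-form equations with Dini-continuous matrix coefficients applies to $\ddiv(A_\pm \nabla u) = 0$ on $B_r(z)$, since $A_\pm = a_\pm P$ inherits Dini continuity from $a_\pm$ and $P$. If instead $r < d/2$, a Harnack argument on a nonnegative scaled solution together with the matrix analogue of Lemma \ref{lemma:lip} applied at a nearest free boundary point yields the bound; the universal $L^\infty$ control $\norm{u}_{L^\infty(\tilde D)} \le cM/d^{n/2}$ comes from the local boundedness theorem for bounded measurable coefficients and is insensitive to the matrix structure. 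The principal obstacle is clearly the matrix extension of the ACF formula itself: the eigenvalue inequality and the Friedland--Hayman inequality must be applied on the \emph{same} sphere in both phases, and it is exactly the shared factor $P$ that allows a single rotation to reduce both phases to a setting where these tools are available---this is the technical point the authors flag in the introduction and defer to the appendix.
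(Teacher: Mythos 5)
Your proposal is correct and follows essentially the same route as the paper: the authors likewise extend Lemma \ref{lemma:Ca} via the broken-harmonic function $w=a_+(z)v^+-a_-(z)v^-$ solving $\ddiv(P(z)\nabla w)=0$, replace Proposition \ref{proposition:acf-est} by Theorem \ref{theorem:acf} (whose hypothesis (iii) is exactly the shared factor $A_\pm(0)=a_\pm(0)P(0)$ you identify) to carry over Lemma \ref{lemma:lip}, and then repeat the dichotomy argument of Theorem \ref{theorem:opt reg} verbatim. No discrepancy to report.
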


\begin{proof} Note that Lemma \ref{lemma:Ca} can be extended to weak solutions of \eqref{eq:main-matrix} in an obvious way.  Moreover, by applying Theorem \ref{theorem:acf} instead of Proposition \ref{proposition:acf-est}, Lemma \ref{lemma:lip} continues to hold.
 Hence one may follow exactly the same argument as in Theorem \ref{theorem:opt reg} to derive Theorem \ref{theorem:opt reg-matrix}. We omit the details.
\end{proof}

Next we define the measure $\mu$ by $d\mu = \ddiv(A_+(x)\nabla u^+)$, and investigate the regularity of our free boundary. Let us hereafter follow Notation \ref{notation:u} and Definition \ref{definition:N-S} for the definition of $\Omega^+(u)$, $\Omega^-(u)$, $\Gamma(u)$, $\cN(u)$ and $\cS(u)$. 

\begin{theorem}\label{theorem:C1a-fb-matrix} Let $a_\pm$ and $P$ satisfy \eqref{eq:cont} and respectively \eqref{eq:cont-matrix} with an $\alpha$-H\"{o}lder continuous $\omega$, and $u$ be a weak solution to \eqref{eq:main-matrix} in $\Omega$. Then for $\mu$-a.e. $z\in\Gamma(u)$, there is $r>0$ such that $\Gamma(u)\cap B_r(z)$ is a $C^{1,\alpha}$ graph; here the radius $r$ and the $C^{1,\alpha}$ norm of the graph may depend on $u$, $z$ and the H\"{o}lder norm of $a_\pm$ and $P$.
\end{theorem}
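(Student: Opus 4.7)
The plan is to run the proof of Theorem \ref{theorem:C1a-fb} in the matrix setting, using the single structural observation already noted in the excerpt: at any point $z \in \Omega$, the limiting profile $v$ makes the function $w(x) := a_+(z)v^+(x) - a_-(z)v^-(x)$ a weak solution of $\ddiv(P(z)\nabla w) = 0$, and under the linear change of variables $y = P(z)^{-1/2}x$ this is exactly Laplace's equation. All the harmonic-analytic tools used in Section \ref{section:hausdorff} (unique continuation, maximum principle, Liouville, interior $C^{k,\alpha}$ estimates) transfer to this class with constants depending only on $\lambda$. Moreover the two-plane profile $P_{\beta,\nu}^z$ remains a weak solution of \eqref{eq:main-matrix}: the flux vector $A_\pm(z)\nabla P_{\beta,\nu}^z = \beta P(z)\nu$ is the \emph{same} constant vector on both sides of $\{x\cdot\nu=0\}$, so the continuity-of-flux condition across the interface is automatic. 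Consequently Lemmas \ref{lemma:gamma}--\ref{lemma:flat-ae} go through verbatim, and $\mu$-a.e. free boundary point admits a two-plane solution as a blowup limit.

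Next I would re-establish the iterative flatness-improvement lemma (Lemma \ref{lemma:approximation}) and its corollary. With $u_k(x) := u(\bar{r}^k x)/\bar{r}^k$, $v_k := a_+(0)u_k^+ - a_-(0)u_k^-$ and $w_k := (v_k - \beta(x\cdot\nu^k))/(\e\bar{r}^{k\alpha})$, a direct computation yields
\begin{equation*}
\ddiv\bigl(P(0)\nabla w_k\bigr) = \frac{1}{\e\bar{r}^{k\alpha}}\ddiv\bigl(\Sigma_k(x)\nabla u_k\bigr),
\end{equation*}
where $\Sigma_k$ is an $n\times n$ matrix encoding the deviation of $A(\bar{r}^k x,\cdot)$ from its value at the origin. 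Decomposing
\begin{equation*}
A_\pm(\bar{r}^kx) - A_\pm(0) = \bigl(a_\pm(\bar{r}^kx)-a_\pm(0)\bigr)P(\bar{r}^kx) + a_\pm(0)\bigl(P(\bar{r}^kx)-P(0)\bigr)
\end{equation*}
and using the $\alpha$-H\"older continuity of both $a_\pm$ and $P$ together with Theorem \ref{theorem:opt reg-matrix} reproduces the analogue of \eqref{eq:small-rhs}. One then replaces the harmonic replacement of $w_k$ by its $P(0)$-harmonic replacement---equivalently, a harmonic replacement after the change of variables $y = P(0)^{-1/2}x$---and the same Taylor expansion argument at the origin produces $\nu^{k+1} := \nu^k + \e\bar{r}^{k\alpha}\beta^{-1}\nabla h(0)$ and closes the induction. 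The propagation lemma (Lemma \ref{lemma:flat-nbd}) transfers without significant change, since it uses only the $L^2\to L^\infty$ elliptic upgrade (still available for \eqref{eq:main-matrix}) and Lemma \ref{lemma:comparison}. Assembling the three ingredients as in the proof of Theorem \ref{theorem:C1a-fb} completes the argument.

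The main technical hurdle will be the bookkeeping of $\Sigma_k$: the two sources of perturbation (the scalar $a_\pm$ and the matrix $P$) must be combined into a single bound governed by the H\"older modulus $\omega$, and one must verify that the constants in the $P(0)$-harmonic replacement depend only on the ellipticity bounds \eqref{eq:ellip-matrix} and not on the particular $P(0)$. A subtler point is that the updated direction $\nu^{k+1}$ must be interpreted in the original $x$-coordinates, where the flatness condition and the resulting $C^{1,\alpha}$ graph are stated; the change of variables $y = P(0)^{-1/2}x$ is only an auxiliary device for solving the replacement problem and must not be allowed to leak into the final geometric conclusion.
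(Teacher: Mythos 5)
Your proposal is correct and follows essentially the same route as the paper: reduce to the constant-coefficient operator $\ddiv(P(0)\nabla\cdot)$ via $v=a_+(0)u^+-a_-(0)u^-$, bound the right-hand side using the H\"older continuity of both $a_\pm$ and $P$ together with the matrix Lipschitz estimate (Theorem \ref{theorem:opt reg-matrix}), and rerun Lemma \ref{lemma:approximation}, Lemma \ref{lemma:flat-nbd} and the blowup analysis of Section \ref{section:hausdorff} with the $P(0)$-harmonic replacement. Your explicit decomposition $A_\pm(\bar{r}^kx)-A_\pm(0)=(a_\pm(\bar{r}^kx)-a_\pm(0))P(\bar{r}^kx)+a_\pm(0)(P(\bar{r}^kx)-P(0))$ is in fact slightly more careful than the paper's terse write-up, which records the perturbation as $\ddiv(\sigma(x)P(x)\nabla u)$ with the scalar $\sigma$ of Lemma \ref{lemma:approximation} and thereby glosses over the contribution from the variation of $P$ itself.
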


\begin{proof} The arguments in Section \ref{section:prelim} and \ref{section:hausdorff} can be easily be generalized to weak solutions to \eqref{eq:main-matrix}. Observe that the function $v$, defined by $v(x):=a_+(0)u^+(x) - a_-(0)u^-(x)$, is a weak solution to 
\begin{equation*}
\ddiv(P(0)\nabla v) = \ddiv(\sigma(x)P(x)\nabla u),
\end{equation*}
where $\sigma$ is defined exactly the same as in the proof of Lemma \ref{lemma:approximation}. Under the assumptions on Theorem \ref{theorem:C1a-fb-matrix}, we have $\norm{\sigma\nabla u}_{L^\infty(B_r)}\leq\lambda^{-1}\omega_0 r^\alpha$ for some $\omega_0>0$ and $0<\alpha<1$. Thus, Lemma \ref{lemma:approximation} can also be extended to $u$. The rest of the proof follows similarly with that of Theorem \ref{theorem:C1a-fb}, and we skip the details.
\end{proof}

Finally we state our result on the measure theoretic regularity of $\Gamma(u)$ in the matrix coefficient case.

\begin{theorem}\label{theorem:Hn-1-matrix} Under the assumption of Theorem \ref{theorem:C1a-fb-matrix}, $\spt(\mu)$ has $\sigma$-finite $(n-1)$-dimensional Hausdorff measure. More specifically, $\cN(u)$ has $\sigma$-finite $(n-1)$-dimensional Hausdorff measure, while $\cS(u)$ has $\mu$-measure zero. In addition, the conclusion of Theorem \ref{theorem:C1a-fb-matrix} holds for every $z\in\cN(u)$, and $\cN(u)$ is relatively open in $\Gamma(u)$. 
\end{theorem}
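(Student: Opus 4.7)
The plan is to mirror the proof of Theorem \ref{theorem:Hn-1} essentially verbatim, checking along the way that each auxiliary fact for the scalar-coefficient model \eqref{eq:main} transfers to the factorized matrix setting $A_\pm = a_\pm P$. First I would show that $\mu(\cS(u))=0$. Suppose for contradiction that $\mu(\cS(u))>0$. By Theorem \ref{theorem:C1a-fb-matrix} there exists $z \in \cS(u)$ at which $\Gamma(u) \cap B_r(z)$ is $C^{1,\alpha}$, and inspection of the proof of Theorem \ref{theorem:C1a-fb-matrix} (together with the matrix version of Corollary \ref{corollary:approximation}) yields a rescaling of $u$ at $z$ that converges strongly in $L^2$ to a two-plane profile $P_{\beta,e}^z$ with $\beta>0$. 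This forces
\begin{equation*}
\varliminf_{r\to 0}\frac{\norm{u}_{L^2(B_r(z))}}{r^{n/2+1}} \geq c\beta > 0,
\end{equation*}
contradicting $z \in \cS(u)$. Hence $\cN(u) = \Gamma(u) \setminus \cS(u)$ carries the full mass of $\mu$.

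Next I would establish $\sigma$-finite $(n-1)$-dimensional Hausdorff measure for $\cN(u)$ by lifting Lemmas \ref{lemma:N} and \ref{lemma:N-hauss} to the matrix situation. Lemma \ref{lemma:N} depends only on the monotonicity of the ACF formula; Theorem \ref{theorem:acf} is already designed with two distinct operators whose coordinate systems can be simultaneously rotated, and the operators $\ddiv(a_\pm P \nabla\cdot)$ fall into that scope after the usual bilinear change $v := a_+(z)u^+ - a_-(z)u^-$. Thus Remark \ref{remark:N} applies and the nondegeneracy dichotomy is preserved, giving $\cN(u) = \bigcup_{j,k} \cN_{j,k}(u)$ as before. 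For Lemma \ref{lemma:N-hauss} I need the matrix-case version of Lemma \ref{lemma:pos-msr}: positivity of $\mu$ follows since $(A_+ \nabla u)\cdot\nabla u \geq \lambda |\nabla u|^2$ so the sign of $\int \psi_\e'(u)(A_+\nabla u)\cdot\nabla u \, \phi \geq 0$ is unchanged. The analog of Lemma \ref{lemma:same-msr} also goes through because $A(x,u)\nabla u = A_+ H(u)\nabla u + A_-(1-H(u))\nabla u$. The remainder of the argument of Lemma \ref{lemma:N-hauss} is a Vitali-covering plus Poincar\'e estimate that uses only the interior Lipschitz bound (Theorem \ref{theorem:opt reg-matrix}) and the uniform ellipticity of $A_\pm$, both of which are in force.

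Finally, to show that $\cN(u)$ is relatively open in $\Gamma(u)$ and that the $C^{1,\alpha}$ conclusion of Theorem \ref{theorem:C1a-fb-matrix} extends to every point of $\cN(u)$, I fix $z \in \cN(u)$. The uniform Lipschitz estimate together with nondegeneracy produces a nontrivial blowup $v \in Blo(u,z)$ whose bilinear transform is a nonconstant $P(z)$-harmonic function, hence admits a point on its zero set where the gradient does not vanish. Lemma \ref{lemma:blowup-twice} (which carries over as the arguments in Sections \ref{section:prelim}--\ref{section:hausdorff} generalize with only notational changes) then supplies a two-plane profile in $Blo(u,z)$, so the proof of Theorem \ref{theorem:C1a-fb-matrix} applies and gives a $C^{1,\alpha}$ graph $\Gamma(u)\cap B_r(z)$. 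On each phase the equation \eqref{eq:main-matrix} reduces to a uniformly elliptic divergence-form equation with Dini continuous coefficients and a $C^{1,\alpha}$ boundary; the Hopf lemma then yields $\p_{\nu^\pm} u^\pm(\xi) > 0$ at every $\xi \in B_r(z) \cap \Gamma(u)$, so $\xi \in \cN(u)$, proving openness.

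The main obstacle is not conceptual but verificational: one must systematically check that all the machinery of Sections \ref{section:prelim}--\ref{section:higher reg}, in particular the positivity and symmetric representation of $\mu$, the blowup analysis, the improvement-of-flatness iteration (Lemma \ref{lemma:approximation}), and the nondegeneracy characterization (Lemma \ref{lemma:N}), carries over to operators of the form $\ddiv(a_\pm P \nabla\cdot)$. Thanks to the factorization $A_\pm = a_\pm P$ and to the fact that the extended ACF formula of Theorem \ref{theorem:acf} was proved precisely to accommodate two distinct operators with simultaneously rotatable coordinate systems, every such check is routine, which is why the proof can be written as a careful translation rather than as a new argument.
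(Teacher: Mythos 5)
Your proposal is correct and follows essentially the same route as the paper: the paper's own proof of this theorem simply states that Lemmas \ref{lemma:N} and \ref{lemma:N-hauss} generalize to the matrix case (via Theorem \ref{theorem:opt reg-matrix} and Theorem \ref{theorem:acf}) and that one then repeats the argument of Theorem \ref{theorem:Hn-1}, which is exactly the translation you carry out in more detail. Your additional verifications (positivity of $\mu$, the condition $A_+(0)=\kappa A_-(0)$ needed for Theorem \ref{theorem:acf}, the Hopf lemma step) are the right points to check and all go through as you describe.
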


\begin{proof} The proof of Lemma \ref{lemma:N} works with weak solutions to \eqref{eq:main-matrix} by invoking Theorem \ref{theorem:opt reg-matrix} and Theorem \ref{theorem:acf} instead of Theorem \ref{theorem:opt reg} and respectively Proposition \ref{proposition:acf-monot}. The proof of Lemma \ref{lemma:N-hauss} can be generalized to the matrix coefficient case in an obvious manner. Thus, one may follow the same arguments in the proof of Theorem \ref{theorem:Hn-1} to derive Theorem \ref{theorem:Hn-1-matrix}. We leave out the details to the reader.
\end{proof}

%
%

\appendix

\section{ACF Monotonicity Formula}\label{section:apndx}

For $r>0$, $z\in\R^n$ and $u\in W^{1,2}(B_r(z))$, set
\begin{equation*}
I(r,z,u):=\int_{B_r(z)}\frac{|\nabla u|^2}{{|x|}^{n-2}}dx,
\end{equation*}
and define the ACF functional $\Phi(r,u,v)$ by
\begin{equation}\label{eq:acf}
\Phi(r,z,u,v):=r^{-4}I(r,z,u)I(r,z,v)=\frac{1}{r^4}\int_{B_r(z)}\frac{|\nabla u|^2}{{|x|}^{n-2}}dx\int_{B_r(z)}\frac{|\nabla v|^2}{{|x|}^{n-2}}dx.
\end{equation}
For the notational convenience, let us abbreviate $I(r,0,u)$ and $\Phi(r,0,u,v)$ by $I(r,u)$ and respectively $\Phi(r,u,v)$. 

Let $\omega$ be a modulus of continuity and $\lambda$ a positive constant. Define $\cL(\lambda,\omega)$ by the class consisting of all elliptic operators $L=\ddiv(a(x)\nabla)$ such that $\lambda\leq a(x)\leq\lambda^{-1}$ and $|a(x)-a(y)|\leq\omega(|x-y|)$ for any $x,y\in B_1$. 

In what follows, any moduli of continuity $\omega$ is assumed to be Dini continuous; i.e., $\int_0^1\frac{\omega(r)}{r}dr<\infty$. Given such an $\omega$, define $\psi:(0,1)\ra[0,\infty)$ by
\begin{equation*}
\psi(r)=\omega(r)+\int_0^r\frac{\omega(\rho)}{\rho}d\rho+\left(\int_0^r\frac{\omega(\rho)}{\rho}\right)^2d\rho.
\end{equation*}

Here we follow the approach in \cite{CJK} and \cite{MP}.

\begin{lemma}\label{lemma:I-est} Let $L\in\cL(\lambda,\omega)$ and suppose that $u$ is a nonnegative weak solution for $Lu\geq 0$ in $B_1$. Then there are $c_0=c_0(n,\lambda)>0$ and $r_0=r_0(n,\lambda,\omega)$ such that 
\begin{equation*}
I(r,u)\leq (1+c_0\psi(r))\left(\frac{1}{r^{n-2}}\int_{\p B_r}u|\p_\nu u|+\frac{n-2}{2r^{n-1}}\int_{\p B_r}u^2\right)
\end{equation*}
for any $0<r\leq r_0$.
\end{lemma}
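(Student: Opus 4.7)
My plan is to mimic the classical Rellich identity for the Laplacian and carefully track the perturbation induced by the variable coefficient. After normalizing so that $a(0) = 1$ (rescaling the equation), write $a = 1 + e$ with $|e(x)| \leq \omega(|x|)$ on $B_r$. I would consider the vector field
\[
G := \frac{a\,u\nabla u}{|x|^{n-2}} - \frac{a\,u^2}{2}\nabla|x|^{2-n},
\]
whose divergence simplifies, after a clean cancellation of the two $a\,u\nabla u\cdot\nabla|x|^{2-n}$ contributions coming from the two terms of $G$, to $a|\nabla u|^2/|x|^{n-2} + u\,Lu/|x|^{n-2} - (u^2/2)\nabla a\cdot\nabla|x|^{2-n} - (au^2/2)\Delta|x|^{2-n}$. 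Integrating on $B_r\setminus B_\epsilon$ and sending $\epsilon\downarrow 0$ to process the distributional $\delta_0$ contribution from $\Delta|x|^{2-n}$ at the origin, and then dropping the nonnegative volume term $\int u\,Lu/|x|^{n-2}$ (since $u\geq 0$ and $Lu\geq 0$ as distributions) should give
\[
\int_{B_r}\frac{a|\nabla u|^2}{|x|^{n-2}} \;\leq\; \frac{1}{r^{n-2}}\int_{\partial B_r}a\,u\,\partial_\nu u \;+\; \frac{n-2}{2r^{n-1}}\int_{\partial B_r}u^2 \;+\; c_n u(0)^2 \;+\; \mathcal{E},
\]
where the $\int u^2$ term has absorbed the boundary contributions involving $a-1$, and $\mathcal{E}$ is a bulk remainder of the form $C_n\int_{B_r}(a-1)\,u\nabla u\cdot x/|x|^n$ coming from a secondary integration by parts that removes the $\nabla a$ (which does not exist under mere Dini continuity of $a$).

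From here the LHS is at least $(1-\omega(r))\,I(r,u)$, while the first boundary term is at most $(1+\omega(r))\,r^{-(n-2)}\int_{\partial B_r}u|\partial_\nu u|$ using $u\geq 0$. The $c_n u(0)^2$ contribution is absorbed into the $r^{-(n-1)}\int u^2$ term through the sub-mean-value inequality for $L$-subharmonic functions with Dini coefficients, at the cost of a multiplicative factor $1+C\psi(r)$ reflecting iterated mean-value corrections at nested scales.

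The genuine obstacle is the bulk error $\mathcal{E}$. A naive Cauchy--Schwarz forces the divergent weighted norm $\int u^2/|x|^n$ whenever $u(0)>0$, so instead I would rewrite $x/|x|^n = -\nabla|x|^{2-n}/(n-2)$ and integrate by parts once more, transferring the derivative onto $u^2$. This produces a boundary trace $r^{-(n-1)}\int_{\partial B_r}(a-1)u^2$ (bounded by $\omega(r)$ times the good $u^2$ trace) together with a new bulk remainder of the same structure as $\mathcal{E}$, which one iterates. Each pass generates, after coarea decomposition of the radial weight against $\omega(|x|)$, an additional factor of $\int_0^r\omega(\rho)/\rho\,d\rho$; iterating twice and estimating the tail should yield precisely the three constituents $\omega(r)$, $\int_0^r\omega(\rho)/\rho\,d\rho$, and $\bigl(\int_0^r\omega(\rho)/\rho\,d\rho\bigr)^2$ that comprise $\psi(r)$. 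Finally, for $r\leq r_0$ small enough that $\omega(r_0)+c_0\psi(r_0)<1/2$, the factor $(1-\omega(r))^{-1}$ on the LHS can be absorbed and the claimed bound follows.
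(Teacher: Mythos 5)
Your Rellich-type setup is sound and in fact coincides, term by term, with the paper's derivation: the divergence computation for $G$ is the identity behind the paper's inequalities \eqref{eq:i-1}--\eqref{eq:pert}, and your $\mathcal{E}=C_n\int_{B_r}(a-1)\,u\nabla u\cdot x/|x|^n$ is exactly the critical bulk term $(2-n)\int_{B_r}bu\nabla u\cdot x/|x|^n$ there. Your diagnosis that na\"ive Cauchy--Schwarz on $\mathcal{E}$ forces the divergent weight $\int_{B_r}u^2/|x|^n$ is also the right one. (A small point: after the divergence theorem the $\delta_0$ contribution of $\Delta|x|^{2-n}$ enters the identity for $\int_{B_r}a|\nabla u|^2|x|^{2-n}$ with a \emph{minus} sign, since $\Delta|x|^{2-n}$ is a nonpositive measure; so the $c_nu(0)^2$ term can simply be discarded and no sub-mean-value argument is needed for it.)

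The gap is in your treatment of $\mathcal{E}$, and it is fatal as stated. Writing $u\nabla u=\tfrac12\nabla(u^2)$ and integrating by parts to ``transfer the derivative onto $u^2$'' does not yield a remainder ``of the same structure as $\mathcal{E}$'': it yields $-\tfrac12\int_{B_r}u^2\,\nabla(a-1)\cdot\nabla|x|^{2-n}$, plus a boundary trace and a harmless delta term (harmless because $a(0)=1$). That is precisely the $\nabla a$ term you integrated by parts to eliminate in the first place --- the two integrations by parts are inverses of each other, so the proposed iteration is circular, gains no factor of $\int_0^r\omega(\rho)\rho^{-1}\,d\rho$ per pass, and in any case reintroduces $\nabla a$, which does not exist for merely Dini continuous $a$. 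The missing idea, which is what the paper (following \cite{CJK} and \cite{MP}) actually uses, is to write the bulk term as an integral over spheres $\int_0^r\rho^{1-n}\int_{\p B_\rho}b\,u\,\p_\nu u\,d\rho$ and split $u=v_\rho+m(\rho)$ on each $\p B_\rho$, where $m(\rho)$ is the average of $u$ over $\p B_\rho$ and $v_\rho=u-m(\rho)$. The oscillation piece is controlled by Cauchy--Schwarz plus the Poincar\'e inequality on $\p B_\rho$, which converts $\int_{\p B_\rho}v_\rho^2$ into $\rho^2\int_{\p B_\rho}|\nabla_\theta u|^2$ and, after integration in $\rho$, is absorbed into $\psi(r)I(r,u)$; the mean piece is controlled by the approximate monotonicity of $\rho\mapsto m(\rho)$ for subsolutions (obtained by testing $Lu\ge 0$), namely $m(\rho)\le m(r)+C\psi(r)I(r,u)^{1/2}$, together with $m(r)^2\le(n\omega_nr^{n-1})^{-1}\int_{\p B_r}u^2$. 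Without this spherical-mean decomposition the estimate of $\mathcal{E}$ does not close, and the three constituents of $\psi(r)$ arise from these two mechanisms, not from an iterated integration by parts.
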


\begin{proof} Without loss of generality, we may assume that $a(0)=1$. Throughout the proof, $c$ will denote a dimensional constant which may vary from one line to another.

Write $L=\ddiv(a(x)\nabla)$ and $b(x)=1-a(x)$. Then since $\Delta u\geq\ddiv(b(x)\nabla u)$ in the sense of distribution, we have $\Delta(\frac{u^2}{2})\geq |\nabla u|^2+u\ddiv(b(x)\nabla u)$ in the same sense, which implies
\begin{equation}\label{eq:i-1}
I(r,u)\leq\frac{1}{2}\int_{B_r}\frac{1}{|x|^{n-2}}\Delta u^2-\int_{B_r}\frac{u}{|x|^{n-2}}\ddiv(b\nabla u).
\end{equation}
Since $\Delta|x|^{2-n}$ is a nonpositive measure in $\R^n$, 
\begin{equation}\label{eq:ori}
\begin{split}
\frac{1}{2}\int_{B_r}\frac{1}{|x|^{n-2}}\Delta u^2&\leq\frac{1}{r^{n-2}}\int_{\p B_r}u\p_\nu u+\frac{n-2}{2r^{n-1}}\int_{\p B_r}u^2,
\end{split}
\end{equation}
Hence, we only need to estimate the second term of the inequality \eqref{eq:i-1}. By means of integration by part, one can easily derive that
\begin{equation}\label{eq:pert}
\begin{split}
-\int_{B_r}\frac{u}{|x|^{n-2}}\ddiv(b\nabla u)&\leq\psi(r)\left(\frac{1}{r^{n-2}}\int_{\p B_r}u|\p_\nu u|+I(r,u)\right)\\
&\quad+(2-n)\int_{B_r}\frac{bu\nabla u\cdot x}{|x|^n}dx.
\end{split}
\end{equation}
Let us split the last term of the right hand side of \eqref{eq:pert} into two parts, namely,
\begin{equation}\label{eq:pert-r}
\begin{split}
\int_{B_r}\frac{bu\nabla u\cdot x}{|x|^n}dx&=\int_0^r\int_{\p B_\rho}\frac{bv_\rho\p_\nu u}{\rho^{n-1}}d\cH^{n-1} d\rho+\int_0^r\int_{\p B_\rho}\frac{m(\rho)b\p_\nu u}{\rho^{n-1}}d\cH^{n-1} d\rho,
\end{split}
\end{equation}
where $m(\rho):=\fint_{\p B_\rho}u$ and $v_\rho:=u-m(\rho)$. By the Poincar\'{e} inequality and the H\"{o}lder inequality, 
\begin{equation}\label{eq:pert-r1}
\begin{split}
\left|\int_0^r\int_{\p B_\rho}\frac{bv_\rho\p_\nu u}{\rho^{n-1}}d\cH^{n-1}d\rho\right|&\leq\psi(r)\int_0^r\frac{\kappa_n}{\rho^{n-2}}\left(\int_{\p B_\rho}|\nabla_\theta u|^2\right)^{\frac{1}{2}}\left(\int_{\p B_\rho}|\p_\nu u|^2\right)^{\frac{1}{2}}d\rho\\
&\leq\frac{\kappa_n\psi(r)}{2}I(r,u).
\end{split}
\end{equation}

On the other hand, the estimate for $\int_0^r\int_{\p B_\rho}\frac{m(\rho)b\p_\nu u}{\rho^{n-3}}d\cH^{n-1}d\rho$ can be done as follows. First using the equation $Lu\geq 0$, it is easy to see that 
\begin{equation*}
m(\rho)\leq m(\tau)-\frac{1}{n\omega_n}\int_{B_\tau\setminus B_\rho}\frac{b\nabla u\cdot x}{|x|^n}dx
\end{equation*}
whenever $0<\rho<\tau<1$. However, by the H\"{o}lder inequality,
\begin{equation*}
\begin{split}
\left|\int_{B_\tau\setminus B_\rho}\frac{b\nabla u\cdot x}{|x|^n}dx\right|&\leq\int_0^\tau\frac{\omega(\sigma)}{\sigma^{n-1}}\left(\int_{\p B_\sigma}|\nabla u|\right)d\sigma\leq\frac{(n\omega_n)^{1/2}}{2}\psi(r)I(\tau,u)^{1/2}.
\end{split}
\end{equation*}
Using these two inequalities, we obtain
\begin{equation}\label{eq:pert-r2}
\begin{split}
\left|\int_0^r\int_{\p B_\rho}\frac{m(\rho)b(x)\p_\nu u}{\rho^{n-1}}\right|&\leq \left(m(r)+\frac{\psi(r)}{2(n\omega_n)^{1/2}}I(r,u)^{1/2}\right)\frac{(n\omega_n)^{1/2}\psi(r)}{2}I(r,u)^{1/2}\\
&\leq\frac{1}{2}\psi(r)\left(\frac{1}{2r^{n-1}}\int_{\p B_r}u^2+I(r,u)\right),
\end{split}
\end{equation}
where in the last inequality we have used Young's inequality and the fact that
\begin{equation*}
m(r)^2=\left(\frac{1}{n\omega_nr^{n-1}}\int_{\p B_r}u\right)^2\leq\frac{1}{n\omega_nr^{n-1}}\int_{\p B_r}u^2.
\end{equation*}

Collecting the estimates \eqref{eq:pert-r1} and \eqref{eq:pert-r2}, we proceed in \eqref{eq:pert-r} as
\begin{equation*}
\begin{split}
\left|\int_{B_r}\frac{bu\nabla u\cdot x}{|x|^n}dx\right|\leq c\psi(r)\left(\frac{1}{2r^{n-1}}\int_{\p B_r}u^2+I(r,u)\right),
\end{split}
\end{equation*}
and hence, in \eqref{eq:pert}, we get
\begin{equation*}
\begin{split}
\left|\int_{B_r}\frac{u}{|x|^n}\ddiv(b\nabla u)\right|&\leq c\psi(r)\left(\frac{1}{r^{n-2}}\int_{\p B_r}u|\p_\nu u|+\frac{(n-2)}{2r^{n-1}}\int_{\p B_r}u^2+I(r,u)\right).
\end{split}
\end{equation*}
Inserting this inequality and \eqref{eq:ori} into \eqref{eq:i-1}, we arrive at
\begin{equation*}
(1-c\psi(r))I(r,u)\leq (1+c\psi(r))\left(\frac{1}{r^{n-2}}\int_{\p B_r}u|\p_\nu u|+\frac{n-2}{2r^{n-1}}\int_{\p B_r}u^2\right).
\end{equation*}
As a final step, we choose $r_0$ sufficiently small such that $c\psi(r)\leq\frac{1}{2}$ for any $r\in(0,r_0]$, and then set $c_0 = \frac{3}{2}c$, by which the proof is complete.
\end{proof}

Let us define a function $g$ on $[0,1)$ by 
\begin{equation*}
g(r) := \int_0^r \psi(\rho)d\rho.
\end{equation*}

\begin{proposition}\label{proposition:acf-monot} Let $L_\pm\in\cL(\lambda,\omega)$ and $u_\pm\in W^{1,2}(B_1)\cap C(\bar{B}_1)$ be such that
\begin{equation*}
u_\pm\geq 0, \quad u_+u_-=0\quad{and}\quad L_\pm u_\pm\geq 0\quad\text{in }B_1,
\end{equation*}
where the last equality is valid in the sense of distribution. Then the functional
\begin{equation*}
r\mapsto e^{\bar{c}g(r)}\Phi(r,u_+,u_-)
\end{equation*}
is monotone increasing in $(0,\bar{r}]$, for some positive $\bar{c}$ and $\bar{r}$, both depending only on $n$, $\lambda$ and $\omega$. 
\end{proposition}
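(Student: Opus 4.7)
The plan is to derive a pointwise lower bound of the form $\frac{d}{dr}\log \Phi(r, u_+, u_-) \ge -\bar c\, g'(r)$ on some $(0, \bar r]$, since then $\frac{d}{dr}\bigl[e^{\bar c g(r)}\Phi(r, u_+, u_-)\bigr] \ge 0$ and the stated monotonicity follows. Logarithmic differentiation gives
\begin{equation*}
\frac{d}{dr}\log \Phi(r, u_+, u_-) = -\frac{4}{r} + \frac{I'(r, u_+)}{I(r, u_+)} + \frac{I'(r, u_-)}{I(r, u_-)}, \qquad I'(r, u) = \frac{1}{r^{n-2}}\int_{\partial B_r}|\nabla u|^2,
\end{equation*}
so the task reduces to a sharp lower bound on each ratio $I'(r, u_\pm)/I(r, u_\pm)$.

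The heart of the argument combines the upper bound on $I(r, u_\pm)$ supplied by Lemma \ref{lemma:I-est} with classical boundary estimates on $\partial B_r$. Applying Cauchy--Schwarz to $\int_{\partial B_r} u_\pm|\partial_\nu u_\pm|$, and invoking the Friedland--Hayman spherical eigenvalue inequality on the nodal caps $\Sigma_\pm(r) = r^{-1}\bigl(\partial B_r \cap \{u_\pm > 0\}\bigr) \subset S^{n-1}$ --- namely $\int_{\partial B_r} u_\pm^2 \le \frac{r^2}{\alpha_\pm(\alpha_\pm + n-2)}\int_{\partial B_r}|\nabla_\theta u_\pm|^2$, where $\alpha_\pm = \alpha_\pm(r)$ is the characteristic exponent of $\Sigma_\pm(r)$ --- together with an elementary AM--GM rearrangement of the decomposition $|\nabla u_\pm|^2 = |\partial_\nu u_\pm|^2 + r^{-2}|\nabla_\theta u_\pm|^2$, yields
\begin{equation*}
I(r, u_\pm) \le \frac{r\,(1 + c_0\psi(r))}{2\alpha_\pm}\cdot I'(r, u_\pm), \qquad \text{hence} \qquad \frac{I'(r, u_\pm)}{I(r, u_\pm)} \ge \frac{2\alpha_\pm}{r\,(1 + c_0\psi(r))}.
\end{equation*}

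Summing the two contributions and invoking the classical Friedland--Hayman inequality $\alpha_+(r) + \alpha_-(r) \ge 2$ --- whose only input is that $\Sigma_+(r)$ and $\Sigma_-(r)$ are disjoint open subsets of $S^{n-1}$, a direct consequence of $u_+ u_- \equiv 0$ --- one arrives at
\begin{equation*}
\frac{d}{dr}\log \Phi(r, u_+, u_-) \;\ge\; -\frac{4}{r} + \frac{4}{r\,(1 + c_0\psi(r))} \;\ge\; -\frac{C\,\psi(r)}{r}.
\end{equation*}
For $\bar r$ small enough that $c_0\psi(r) \le \tfrac{1}{2}$ on $(0, \bar r]$, this right-hand side is absorbed by the exponential factor $e^{\bar c g(r)}$ for $\bar c$ universally large, provided the perturbation function $\psi$ and its antiderivative $g$ are calibrated so that $g'$ dominates the error on $(0,\bar r]$; this is precisely where the Dini assumption on $\omega$ enters in an essential way.

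The main obstacle is the careful perturbation analysis condensed in Lemma \ref{lemma:I-est}: integrating by parts against $|x|^{2-n}$ with Dini continuous coefficients $a_\pm$ requires handling a borderline singular integral, and the resulting error $c_0\psi(r)$ must be quantitatively controlled so that its accumulation across scales remains finite. A secondary observation is that having \emph{two} distinct operators $L_+$ and $L_-$ does not obstruct the classical Friedland--Hayman step, because both $\Sigma_+(r)$ and $\Sigma_-(r)$ live naturally on the \emph{same} round sphere $\partial B_r$ --- the structural compatibility between $L_+$ and $L_-$ highlighted in the introduction, which makes this two-operator ACF monotonicity possible.
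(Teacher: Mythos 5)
Your argument is essentially the paper's: Lemma \ref{lemma:I-est}, combined with Young's inequality and the Rayleigh--quotient (principal eigenvalue) bound on the caps $\Sigma_\pm(r)$, gives the differential inequality $I(r,u_\pm)\le\frac{(1+c_0\psi(r))\,r}{2\alpha_\pm(r)}\,I'(r,u_\pm)$, and then logarithmic differentiation of $\Phi$ together with the Friedland--Hayman inequality $\alpha_+(r)+\alpha_-(r)\ge 2$ closes the estimate. One terminological slip: the inequality $\int_{\partial B_r}u_\pm^2\le\frac{r^2}{\alpha_\pm(\alpha_\pm+n-2)}\int_{\partial B_r}|\nabla_\theta u_\pm|^2$ is not Friedland--Hayman but simply the variational characterization of the principal Dirichlet eigenvalue of the cap; Friedland--Hayman proper is only the lower bound $\alpha_++\alpha_-\ge 2$ for complementary spherical domains, which you then invoke correctly.

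The one step you leave conditional --- that the conclusion holds ``provided $g$ is calibrated so that $g'$ dominates the error'' --- is precisely the delicate point, and you should note that with the gauge actually defined in the appendix, $g(r)=\int_0^r\psi(\rho)\,d\rho$, the required domination fails: your (correct) bound $\frac{d}{dr}\log\Phi\ge -C\psi(r)/r$ needs $\bar c\,g'(r)\ge C\psi(r)/r$, whereas $g'(r)=\psi(r)$, which is smaller by a factor of $r$ near the origin. The gauge that makes the absorption work is $g(r)=\int_0^r\psi(\rho)\rho^{-1}\,d\rho$, whose finiteness requires slightly more than Dini continuity of $\omega$ (a double--Dini type condition, as one sees by Fubini). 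The paper's own displayed identity for $\frac{d}{dr}\bigl(e^{\bar c g(r)}\Phi(r)\bigr)$ omits an overall factor of $1/r$ in front of the bracket, which is why this mismatch is invisible there; your more careful computation exposes it but does not resolve it. Apart from this calibration issue, which is shared with (indeed inherited from) the paper's write-up, the proof is sound and follows the same route.
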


\begin{proof} Since $u_\pm$ and $\omega$ are being fixed throughout the proof, we simplify our notations $\Phi(r,u_+,u_-)$ and $I(r,u_\pm)$ by $\Phi(r)$ and respectively $I_\pm(r)$. Denote by $\lambda_\pm(r)$ the principal eigenvalue of the spherical Laplacian in $\Sigma_\pm(r):=\{u_\pm>0\}\cap\p B_r$. The eigenvalue inequality reads
\begin{equation}\label{eq:eigen}
\lambda_\pm(r)\int_{\Sigma_\pm(r)}u_\pm^2\leq \int_{\Sigma_\pm(r)}|\nabla_\theta u|^2.
\end{equation}
Next write by $\beta_\pm(r)$ the characteristic constant for $\Sigma_\pm(r)$. Note that 
\begin{equation}\label{eq:char const}
\beta_\pm(r)(\beta_\pm(r)+n-2)=\lambda_\pm(r).
\end{equation}
We will use the Friedland-Hayman inequality,
\begin{equation}\label{eq:fh}
\beta_+(r)+\beta_-(r)\geq 2.
\end{equation}

Again for brevity, let $u$, $I(r)$, $\Sigma(r)$, $\alpha(r)$ and $\lambda(r)$ respectively stand for either $u_+$, $I_+(r)$, $\Sigma_+(r)$, $\beta_+(r)$ and $\lambda_+(r)$ or $u_-$, $I_-(r)$, $\Sigma_-(r)$, $\beta_-(r)$ and $\lambda_-(r)$. Applying Young's inequality and \eqref{eq:eigen} together with \eqref{eq:char const}, we derive that 
\begin{equation*}
\begin{split}
2r\int_{\Sigma(r)}u|\p_\nu u|+(n-2)\int_{\Sigma(r)}u^2&\leq(\alpha(r)+n-2)\int_{\Sigma(r)}u^2+\frac{r^2}{\alpha(r)}\int_{\Sigma(r)}|\p_\nu u|^2\\
&\leq\frac{r^2}{\alpha(r)}\int_{\Sigma(r)}|\nabla u|^2.
\end{split}
\end{equation*}
On the other hand, since $\nabla u\in L^2(B_1)$, $I(r)$ is absolutely continuous for $r\in(0,1)$, and hence, for a.e. $r\in(0,1)$, we have $I'(r)=\frac{1}{r^{n-2}}\int_{\Sigma(r)}|\nabla u|^2$. In view of Lemma \ref{lemma:I-est} and the inequality above, 
\begin{equation}\label{eq:i-i'}
I(r)\leq \frac{1+c_0\psi(r)}{2\alpha(r)}rI'(r),
\end{equation}
for a.e. $r\in(0,r_0)$, where $c_0$ and $r_0$ are chosen as in Lemma \ref{lemma:I-est}.

As $I_\pm(r)$ being absolutely continuous in $r$, we have, for a.e. $r\in(0,1]$, 
\begin{equation*}
\begin{split}
\frac{d}{dr}(e^{\bar{c}g(r)}\Phi(r))&=e^{\bar{c}g(r)}\Phi(r)\left[\frac{rI_+'(r)}{I_+(r)}+\frac{rI_-'(r)}{I_-(r)}-4+\bar{c}\psi(r)\right].
\end{split}
\end{equation*}
By means of \eqref{eq:fh} and \eqref{eq:i-i'}, we arrive at 
\begin{equation*}
\frac{d}{dr}(e^{\bar{c}g(r)}\Phi(r))\geq e^{\bar{c}g(r)}\Phi(r)\left[\frac{2(\beta_+(r)+\beta_-(r))}{1+c_0\psi(r)}-4+\bar{c}\psi(r)\right]\geq 0,
\end{equation*}
provided that we have choose $r_0$ such that $c_0\psi(r)<1$ for all $0<r\leq r_0$ and then $\bar{c}\geq 4c_0$. The proof is completed.
\end{proof}

As a result, we also get a useful estimate for the ACF functional.

\begin{proposition}\label{proposition:acf-est} Let $L_\pm$, $u_\pm$ and $r_0$ as in Proposition \ref{proposition:acf-monot}. Then
\begin{equation*}
\Phi(r,u_+,u_-)\leq C\norm{u_+}_{L^2(B_1)}^2\norm{u_-}_{L^2(B_1)}^2,
\end{equation*}
for any $0<r\leq r_0$, where $C$ depends only on $n$, $\lambda$ and $\omega$.
\end{proposition}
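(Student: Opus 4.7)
The plan is to let the monotonicity do all the scaling work and then estimate $I(r_0,u_\pm)$ by hand. By Proposition \ref{proposition:acf-monot}, the quantity $e^{\bar{c}g(r)}\Phi(r,u_+,u_-)$ is nondecreasing on $(0,r_0]$, so
\begin{equation*}
\Phi(r,u_+,u_-)\leq e^{\bar{c}\,g(r_0)}\,\Phi(r_0,u_+,u_-)\quad\text{for all }0<r\leq r_0.
\end{equation*}
Since $g(r_0)$ is universal, everything reduces to proving a universal bound
\begin{equation*}
I(r_0,u_\pm)\leq C\,\norm{u_\pm}_{L^2(B_1)}^2,
\end{equation*}
because then $\Phi(r_0,u_+,u_-)=r_0^{-4}I(r_0,u_+)I(r_0,u_-)\leq Cr_0^{-4}\norm{u_+}_{L^2(B_1)}^2\norm{u_-}_{L^2(B_1)}^2$.

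To control $I(r_0,u_\pm)$ I will exploit the identity
\begin{equation*}
I(r_0,u_\pm)=I(r_0/2,u_\pm)+\int_{B_{r_0}\setminus B_{r_0/2}}\frac{|\nabla u_\pm|^2}{|x|^{n-2}}\,dx,
\end{equation*}
which separates the singular part of the weight from a portion where $|x|\geq r_0/2$ (so the weight is harmless). On the annulus the integral is bounded by $Cr_0^{-(n-2)}\norm{\nabla u_\pm}_{L^2(B_{r_0})}^2$, and the Caccioppoli inequality for the nonnegative subsolutions $u_\pm$ (testing $L_\pm u_\pm\geq 0$ against $u_\pm\eta^2$ for a standard cutoff $\eta$ supported in $B_{2r_0}$, which we may assume lies inside $B_1$ by making $r_0$ smaller if necessary) gives $\norm{\nabla u_\pm}_{L^2(B_{r_0})}\leq Cr_0^{-1}\norm{u_\pm}_{L^2(B_1)}$, so the annular piece is dominated by $Cr_0^{-n}\norm{u_\pm}_{L^2(B_1)}^2$.

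The essential step is bounding $I(r_0/2,u_\pm)$, where the singularity of $|x|^{2-n}$ matters. For this I apply Lemma \ref{lemma:I-est} pointwise in $r$ and integrate over $r\in(r_0/2,r_0)$:
\begin{equation*}
\int_{r_0/2}^{r_0} I(r,u_\pm)\,dr\leq C\int_{B_{r_0}\setminus B_{r_0/2}}\frac{u_\pm|\nabla u_\pm|}{|x|^{n-2}}\,dx+C\int_{B_{r_0}\setminus B_{r_0/2}}\frac{u_\pm^2}{|x|^{n-1}}\,dx,
\end{equation*}
using the usual coarea identification for the surface integrals. Again $|x|\geq r_0/2$ on the annulus, so Cauchy--Schwarz and the Caccioppoli estimate above turn the right-hand side into $Cr_0^{-(n-1)}\norm{u_\pm}_{L^2(B_1)}^2$. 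Since $r\mapsto I(r,u_\pm)$ is monotone nondecreasing, we have $(r_0/2)I(r_0/2,u_\pm)\leq \int_{r_0/2}^{r_0}I(r,u_\pm)\,dr$, yielding $I(r_0/2,u_\pm)\leq Cr_0^{-n}\norm{u_\pm}_{L^2(B_1)}^2$. Combining with the annular estimate, $I(r_0,u_\pm)\leq Cr_0^{-n}\norm{u_\pm}_{L^2(B_1)}^2$, and the proposition follows from the monotonicity reduction above.

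The main obstacle I anticipate is the bookkeeping around the singular weight and the direction of the monotonicity of $I(r,\cdot)$: one must integrate \emph{forward} from $r_0/2$ to $r_0$ to get a useful lower bound on the integral of $I$ in terms of $I(r_0/2)$, and one must also make sure that $2r_0\leq 1$ so the Caccioppoli cutoff fits inside $B_1$; this is cost-free since we may harmlessly shrink the universal $r_0$ of Proposition \ref{proposition:acf-monot}.
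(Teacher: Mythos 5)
Your proof is correct and takes essentially the same route as the paper: reduce to $r=r_0$ via the almost-monotonicity of Proposition \ref{proposition:acf-monot}, then bound $I(r_0,u_\pm)\leq C\norm{u_\pm}_{L^2(B_1)}^2$. The paper merely asserts this last bound as ``not hard to see''; your combination of the Caccioppoli inequality for the nonnegative subsolutions with an integrated form of Lemma \ref{lemma:I-est} (and the harmless shrinking of $r_0$ so the cutoff fits in $B_1$) is a valid way to supply the missing detail.
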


\begin{proof} It is not hard to see that $I(r_0,u_\pm)\leq N\norm{u_\pm}_{L^2(B_1)}^2$ for some $N=N(n,\lambda,\omega)$. With Proposition \ref{proposition:acf-monot} at hand, we obtain
\begin{equation*}
\Phi(r,u_+,u_-)\leq e^{\bar{c}(g(r_0)-g(r))}\Phi(r_0,u_+,u_-),
\end{equation*}
for $0<r\leq r_0$. The proof is now finished by taking $C=e^{\bar{c}g(r_0)}N^2/r_0^4$. 
\end{proof}

Let us make a further generalization to Proposition \ref{proposition:acf-monot} and \ref{proposition:acf-est}. 

\begin{theorem}\label{theorem:acf} The conclusions of Proposition \ref{proposition:acf-monot} and \ref{proposition:acf-est} remain to hold with $L_\pm = \ddiv(A_\pm(x)\nabla)$, where $A_+$ and $A_-$ are symmetric matrix valued functions on $B_1$ satisfying the following structure conditions:
\begin{enumerate}[(i)]
\item There is $0<\lambda<1$ such that 
\begin{equation*}
\lambda I\leq A_\pm(x) \leq\lambda^{-1}I\quad\text{in }B_1.
\end{equation*}
\item There is a Dini continuous modulus of continuity $\omega$ such that
\begin{equation*}
|A_\pm(x) - A_\pm(y)|\leq \omega(|x-y|)\quad\text{for any }x,y\in B_1.
\end{equation*} 
\item There exists a positive number $\kappa$ such that 
\begin{equation*}
A_+(0) = \kappa A_-(0).
\end{equation*}
\end{enumerate}
\end{theorem}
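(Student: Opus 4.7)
The crux of the argument is condition (iii): since $A_+(0)$ and $A_-(0)$ are proportional, a single linear change of variables simultaneously normalizes both matrices at the origin, reducing the matrix case essentially to the scalar perturbation analysis already carried out in Lemma \ref{lemma:I-est} and Proposition \ref{proposition:acf-monot}. My plan is therefore to apply the linear transformation $x = A_+(0)^{1/2} y$, set $\tilde{u}_\pm(y) := u_\pm(A_+(0)^{1/2} y)$ and
$$\tilde{A}_\pm(y) := A_+(0)^{-1/2} A_\pm(A_+(0)^{1/2} y) A_+(0)^{-1/2}.$$
By condition (iii), $\tilde{A}_+(0) = I$ and $\tilde{A}_-(0) = \kappa^{-1} I$, while conditions (i) and (ii) are preserved (with the same Dini-type modulus $\tilde{\omega}$, up to multiplicative constants depending only on $\lambda$). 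The transformed $\tilde{u}_\pm$ remain nonnegative with disjoint supports and satisfy $\ddiv_y(\tilde{A}_\pm(y)\nabla_y \tilde{u}_\pm) \geq 0$, and balls in $y$-coordinates correspond to ellipsoids in $x$-coordinates whose radii are pinched between $\lambda^{1/2} r$ and $\lambda^{-1/2} r$ — so any monotonicity and bound obtained on $y$-balls transfers to conclusions of the same form on $x$-balls up to universal constants.

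With the reduction in place, I would next observe that, writing $\tilde{B}_+(y) := I - \tilde{A}_+(y)$ and $\tilde{B}_-(y) := \kappa^{-1} I - \tilde{A}_-(y)$, both matrices satisfy $\|\tilde{B}_\pm(y)\| \leq \tilde{\omega}(|y|)$ and both subsolutions satisfy inequalities of the form $\Delta \tilde{u} \geq \ddiv(\tilde{B}(y)\nabla \tilde{u})$ in the distributional sense (for $\tilde{u}_-$, after multiplying by $\kappa$). At this point I would rerun the proof of Lemma \ref{lemma:I-est} \emph{verbatim}, with the scalar $b(x)$ uniformly replaced by the matrix $\tilde{B}(y)$: the master identity $\Delta(\tilde{u}^2/2) \geq |\nabla \tilde{u}|^2 + \tilde{u}\,\ddiv(\tilde{B}\nabla \tilde{u})$ still holds, and each subsequent manipulation (integration by parts, decomposition of $\nabla \tilde{u}$ into radial and tangential components, Poincar\'e on $\p B_\rho$, the mean-value bound on $m(\rho)$) produces an estimate whose only use of the coefficient is through its pointwise operator norm. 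In particular, the key splitting $\tilde{B}\nabla \tilde{u}\cdot x = \nabla \tilde{u}\cdot(\tilde{B} x)$ together with $|\tilde{B}(y) x| \leq \tilde{\omega}(|y|)|y|$ replaces the scalar bound $|b(y) y| \leq \omega(|y|)|y|$ without any structural change. Once the matrix version of Lemma \ref{lemma:I-est} is in hand, Proposition \ref{proposition:acf-monot} follows line by line, because the Friedland–Hayman inequality \eqref{eq:fh} and the eigenvalue inequality \eqref{eq:eigen} are statements about the spherical caps $\Sigma_\pm(r) \subset \p B_r$ and are completely independent of the operators. Proposition \ref{proposition:acf-est} is then immediate from monotonicity and the standard interior energy estimate, exactly as in the scalar case.

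The main obstacle — and the only place where genuine care is required — lies in the matrix analogue of the estimate \eqref{eq:pert-r}--\eqref{eq:pert-r2} of Lemma \ref{lemma:I-est}. In the scalar proof the delicate piece is the bound on $\int_{B_r} |x|^{-n} b u \nabla u\cdot x$, handled by splitting into tangential and radial contributions and controlling the radial one via the monotonicity of $m(\rho)$ and a Young-inequality trick. In the matrix setting one must verify that the analogous splitting of $\int_{B_r} |x|^{-n} u \nabla u\cdot(\tilde{B}x)$ works, noting that $\tilde{B}x$ does \emph{not} point radially, so the clean radial/tangential decomposition requires writing $\nabla \tilde{u}\cdot (\tilde{B}x) = (\p_\nu \tilde{u})\langle \tilde{B}x, x\rangle/|x| + (\tilde{B}x)\cdot \nabla_\theta \tilde{u}$ and estimating both terms using $|\langle \tilde{B}(y) x, x\rangle| \leq \tilde{\omega}(|y|)|y|^2$ and $|\tilde{B}(y)x| \leq \tilde{\omega}(|y|)|y|$. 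Provided these pointwise bounds are used carefully in place of $b(y)$, each Cauchy–Schwarz/Poincar\'e step produces an error of the same order $\tilde\psi(r)$, yielding precisely the form of Lemma \ref{lemma:I-est} needed to close the monotonicity argument.
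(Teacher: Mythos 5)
Your proposal is correct and follows essentially the same route as the paper: the paper's (terse) proof likewise extends Lemma \ref{lemma:I-est} to matrix perturbations of a constant coefficient operator and uses condition (iii) precisely so that a single linear change of variables normalizes both $A_+(0)$ and $A_-(0)$ simultaneously, keeping $u_+$ and $u_-$ supported in complementary caps of the \emph{same} spheres so that the Friedland--Hayman inequality still applies. Your write-up simply supplies details the paper omits (the explicit substitution $x=A_+(0)^{1/2}y$, and the radial/tangential splitting of $\nabla\tilde u\cdot(\tilde B x)$ needed because $\tilde Bx$ is no longer radial), and these details check out.
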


\begin{proof} One may notice that Lemma \ref{lemma:I-est} can be straightforwardly extended to the matrix coefficient case, provided that the associated matrix is symmetric uniformly elliptic and Dini continuous. Next in the proof of Proposition \ref{proposition:acf-monot}, the condition (iii) allows us to apply Lemma \ref{lemma:I-est} for $u_+$ and $u_-$ in the same coordinate, thus not affecting the Friedland-Hayman inequality. The rest of the proof follows in exactly the same way of those in Proposition \ref{proposition:acf-monot} and \ref{proposition:acf-est}, and hence, we omit the details.
\end{proof}

\end{document}